\documentclass{amsart}
\usepackage{graphicx} 

\usepackage[draft]{say}
\definecolor{egyptianblue}{rgb}{0.06, 0.2, 0.65}

\usepackage[dvipsnames]{xcolor}
\definecolor{MHcol}{RGB}{100, 180, 255}

\usepackage{amsthm}
\usepackage{amssymb}
\usepackage{amsmath}
\usepackage{parskip}
\usepackage{tikz}
\usepackage{tikz-cd}
\usepackage{bbm}
\usepackage{enumitem}
\usepackage{xcolor}
\usepackage{parskip}
\usepackage{import}
\usepackage{yhmath}
\usepackage{aliascnt}
\usepackage{standalone}
\usepackage{caption}
\usepackage{subcaption}
\usepackage{mathtools}

\usepackage{comment}

\usepackage[margin=1in,marginparwidth=0.8in, marginparsep=0.1in]{geometry}

\usepackage[bookmarks=true, bookmarksopen=true,%
bookmarksdepth=3,bookmarksopenlevel=2,%
colorlinks=true,%
linkcolor=blue,%
citecolor=blue,%
filecolor=blue,%
menucolor=blue,%
urlcolor=blue]{hyperref}

\usepackage{mathtools}

\DeclareMathOperator*{\colim}{colim}

\DeclareMathOperator{\Hom}{Hom}

\DeclareMathOperator{\Sh}{Sh}
\DeclareMathOperator{\End}{End}

\newcommand{\Mod}{\mathrm{Mod}}

\DeclareMathOperator{\Perf}{Perf}
\DeclareMathOperator{\Fuk}{Fuk}

\newcommand{\op}{\mathrm{op}}

\newcommand{\cA}{\mathcal{A}}
\newcommand{\cB}{\mathcal{B}}
\newcommand{\cC}{\mathcal{C}}
\newcommand{\cK}{\mathcal{K}}
\newcommand{\cF}{\mathcal{F}}

\newcommand{\cS}{\mathcal{S}}

\newcommand{\bR}{\mathbb{R}}

\newcommand{\bZ}{\mathbb{Z}}

\newcommand{\fg}{\mathfrak{g}}

\DeclareMathOperator{\starp}{star^{+}}
\DeclareMathOperator{\sta}{star}

\DeclareMathOperator{\coh}{coh}

\DeclareMathOperator{\fib}{fib}
\DeclareMathOperator{\cone}{cone}

\let\SS\relax
\DeclareMathOperator{\SS}{SS}

\DeclareMathOperator{\cofib}{cofib}

\DeclareMathOperator{\thick}{thick}
\DeclareMathOperator{\Loc}{Loc}

\DeclareMathOperator{\Ext}{Ext}
\DeclareMathOperator{\Idem}{Idem}
\DeclareMathOperator{\gr}{gr}
\DeclareMathOperator{\qgr}{qgr}

\newcommand{\cX}{\mathcal{X}}
\newcommand{\cR}{\mathcal{R}}
\newcommand{\cO}{\mathcal{O}}

\newtheorem{dummy}{dummy}[section]
\newtheorem{lemma}[dummy]{Lemma}
\newtheorem{theorem}[dummy]{Theorem}

\newtheorem{conjecture}[dummy]{Conjecture}
\newtheorem{wrong}[dummy]{Nonjecture}
\newtheorem{corollary}[dummy]{Corollary}
\newtheorem{proposition}[dummy]{Proposition}

\theoremstyle{definition}
\newtheorem{definition}[dummy]{Definition}

\newtheorem{example}[dummy]{Example}
\newtheorem{remark}[dummy]{Remark}

\newtheorem{question}[dummy]{Question}

\title{A proper dg algebra which does not cogenerate}
\author{Mingyuan Hu} \author{Vivek Shende} \author{Dinglong Wang}

\begin{document}

\maketitle

\begin{abstract}
    Keller's
    strong form of the homological conjectures asserts: any finite dimensional algebra cogenerates its unbounded derived category of modules.  Here we record an example of a (coconnective) dg algebra with finite dimensional cohomology, which does not cogenerate its module category, along with some related phenomena: a smooth dg category whose dualizing bimodule fails to be nondegenerate, and a nontrivial fully faithful left Calabi-Yau morphism. 
    All  examples and most proofs were produced by ChatGPT. 

    In an appendix we explain the  reason  we were looking for such examples: their existence would follow from the existence of Weinstein symplectic manifolds which failed to satisfy Arnol'd's chord conjecture.
\end{abstract}

    
\section{Introduction}

There is a family of much studied `homological' conjectures about the categories of modules over finite dimensional algebras, going back at least to 1960 \cite{Bass}; we refer to \cite{Happel} for an overview. Keller gave a modern strengthening in terms of generation of the unbounded derived category.  

We say that $X$ generates a dg category $\mathcal{C}$ if, for $M \in \mathcal{C}$, we have $\Hom(X, M) = 0 \implies M = 0$.  
Here, as throughout the article, all functors like $\Hom$, $\otimes$, etc. mean their derived variants; similarly, when we refer to categories of modules we mean the dg category of unbounded dg modules.

\begin{conjecture} \label{keller conjecture} \cite{Keller-unbounded-homological, Rickard} Let $A$ be a finite dimensional algebra over a field $k$. 
Then $A^\vee = \Hom_k(A,k)$ generates the category $\Mod_A$ of right $A$-modules.
\end{conjecture}


As observed by Rickard, if $A^{\op}$ is finite dimensional and $(A^{\op})^\vee$ generates, then $A$ cogenerates the category $\Mod_{A}$, i.e. $\Hom_A(M, A) = 0 \implies M = 0$ \cite[Proposition 5.1]{Rickard}.  He also showed that an algebra which cogenerates satisfies the finitistic dimension conjecture~\cite[Proposition 5.2]{Rickard}, which is in turn known to imply the various other homological conjectures, and that the condition that  $A$ cogenerates depends only on $\Mod_A$.  

One might naturally formulate: 

\begin{wrong} \label{proper dgas cogenerate}
    Any dg algebra over a field with finite dimensional cohomology cogenerates. 
\end{wrong}

We give one positive result in this direction: 

\begin{proposition}
    \label{prop: bounded below}
    Let $A$ be a coconnective ($H^{<0}(A) = 0$) dg algebra with $H^0(A) = k$ and $H^*(A)$ bounded above.  Then $\Hom_A(M, A) \ne 0$ for every $M \in \Mod_A$ with $H^*(M) \ne 0$ bounded below.
\end{proposition}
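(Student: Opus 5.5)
The plan is to reduce to a minimal model on both sides and then write down an explicit nonzero morphism $M \to A$ that lands in the top cohomological degree of $A$. Let $N$ be the largest degree with $H^N(A) \neq 0$; it exists because $H^*(A)$ is bounded above and $H^0(A) = k \neq 0$.

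\textbf{Step 1: model for $A$.} By homotopy transfer, $A$ is quasi-isomorphic to a strictly unital minimal $A_\infty$-algebra structure on $H := H^*(A)$. We have $H^0 = k\cdot 1$, $H^{<0} = 0$ and $H^{>N} = 0$, so $H^{+} := H^{\geq 1}$ is an augmentation ideal. Module categories are unchanged, so from now on we work with $A_\infty$-modules over $H$. (Equivalently, one could try to find a dg model with $A^0 = k$ and $A^{>N} = 0$. I use the minimal model because cutting a dg algebra off above degree $N$ is not obviously multiplicative.) The feature we need is that all higher products $m_r$ with $r \geq 2$ that involve an element of degree $N$ together with at least one element of $H^+$ vanish for degree reasons. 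Indeed, $m_r$ has degree $2-r$, so such an output would sit in degree at least $N+1$ after accounting for the inputs, and $H^{>N} = 0$.

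\textbf{Step 2: minimal semifree model for $M$.} Let $m$ be the lowest degree with $H^m(M) \neq 0$. Since $H$ is connected ($H^0 = k$, $H^{<0}=0$) and $H^*(M)$ is bounded below, $M$ admits a minimal semifree (twisted-complex) model $M \simeq V \otimes H$, where $V$ is a graded vector space concentrated in degrees $\geq m$. The structure maps send $V$ into $V \otimes H^{+}$ together with higher terms involving $H^+$, with no linear part $V \to V$. Minimality forces $V^m \cong H^m(M) \neq 0$; equivalently, $H^m(M \otimes_A k) = H^m(M)$. Constructing this model in the unbounded but bounded-below setting is standard: build it degree by degree from the bottom, with convergence guaranteed because $H^+$ raises degree.

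\textbf{Step 3: the map.} Choose a nonzero linear functional $\varphi : V^m \to k$ and a nonzero $s \in H^N$. Define a morphism $f : M \to A$ of degree $N - m$ by $f(v \otimes 1) = \varphi(v)\, s$ for $v \in V^m$, with $f$ zero on $V^{>m}$ and all higher components of $f$ zero. To check that $f$ is closed, every term in the $A_\infty$-morphism equation either multiplies $s$ by something in $H^+$ or involves $f$ evaluated on $V^{>m}$; both vanish, by Step 1 and by construction respectively. Note also that $d s = 0$ since the model is minimal.

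To show $f$ is not null-homotopic, suppose $f = \partial h$ with $h$ of degree $N - m - 1$, and evaluate on $v \in V^m$ with $\varphi(v) \neq 0$. The generator $v$ has no linear differential into $V$, so the terms of $\partial h$ at $v$ are all of the form $h(w)\cdot a$ or $m_r(h(\ldots), a, \ldots)$ with $w \in V^{>m}$ and $a \in H^+$. Here $h(w)$ lies in degree $N - m - 1 + |w| \geq N$, so each such term lands in degree $> N$ and vanishes. The only possible surviving term is $m_1 h(v) = 0$. Hence $s = 0$, a contradiction, and so $\Hom^{N-m}_A(M, A) \neq 0$.

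The main obstacle is the bookkeeping in Steps 2 and 3. One must set up the minimal semifree model and the $A_\infty$ Hom complex carefully enough to see that every component of the closedness and homotopy equations really does involve a product of a degree-$\geq N$ element with something in $H^{+}$. I would first try to carry this out with a dg model $A$ satisfying $A^0 = k$ and $A^{>N} = 0$, if one can be arranged, since then only the differential and binary products appear. Otherwise I would fall back on the $A_\infty$ version, where the same degree count applies term by term.
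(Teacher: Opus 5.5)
\textbf{There is a genuine gap, in the non-null-homotopy part of Step 3.} It comes from your claim in Step 2 that minimality forces $V^m\cong H^m(M)$, equivalently $H^m(M\otimes_A k)=H^m(M)$. That claim can fail as soon as $H^1(A)\neq 0$. For $v\in V^m$, $dv$ has degree $m+1$ and lies in $V\otimes H^+$ with $V^{<m}=0$. So its component lies in $V^m\otimes H^1$, and nothing makes it vanish.

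For the same reason your degree count runs the wrong way. The terms $h(w)\cdot a$ of $(\partial h)(v)$ have $w\in V^m$ and $a\in H^1$, not $w\in V^{>m}$. Then $h(w)a$ sits in degree exactly $N$, so it need not vanish.

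Here is a concrete counterexample. Take $A=k[\epsilon]/(\epsilon^2)$ with $|\epsilon|=1$ (so $N=1$), and $M=k$ (so $m=0$). The minimal semifree resolution is $F=\bigoplus_{n\ge 0}e_nA$ with $|e_n|=0$, $de_0=0$, $de_{n+1}=e_n\epsilon$. Thus $V^0$ is infinite dimensional while $H^0(M)=k$. If $\varphi$ is dual to $e_1$, your $f$ sends $e_1\mapsto\epsilon$ and every other $e_n\mapsto 0$. This $f$ is closed, but it equals $\partial h$ for $h(e_0)=\pm1$ and $h(e_n)=0$ for $n\ge 1$. So an arbitrary nonzero $\varphi$ on $V^m$ does not work.

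\textbf{The repair.} Take $\varphi$ nonzero on a \emph{closed} element $v_0\in V^m$. Since $(V\otimes H)^{m-1}=0$, the closed elements of $V^m$ are exactly $H^m(M)\neq 0$. Your closedness check still goes through. Now $f$ induces $H^m(M)\to H^N(A)$, $[v_0]\mapsto\varphi(v_0)s\neq 0$, so it is not null-homotopic.

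Alternatively, use the minimal $A_\infty$-model of $M$ itself, i.e.\ $H^*(M)$ with $\mu_1=0$, instead of a semifree model. There your degree count is correct as written, since $(\partial h)_1=m_1h_1\pm h_1\mu_1=0$.

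\textbf{Comparison with the paper.} The repaired argument is exactly the paper's ``visible bottom'' condition, but the paper reaches it without minimal models or $A_\infty$ structures. It builds a semi-free resolution by iterated cone-killing. Coconnectivity and $H^0(A)=k$ keep every generator in degree $\ge n$ and produce a closed bottom generator $v_0$ with $[v_0]\neq 0$ in $H^n(M)$. It then extends $v_0\mapsto a$ one generator at a time, with obstructions lying in $H^{>N}(A)=0$.

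\textbf{A second weak point.} Your justification of existence, ``degree by degree from the bottom,'' also fails when $H^1\neq 0$. As the example shows, a single degree may need infinitely many generators. The paper's construction handles this by showing that the maps $H^*(C_{(j)})\to H^*(C_{(j+1)})$ vanish, so the colimit is a resolution.
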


However, Nonjecture \ref{proper dgas cogenerate} is false.  Here we record an example, found by ChatGPT, prompted to consider the ring which was used by Schulz to provide a counterexample to Tachikawa's conjecture \cite{Schulz1994}.

\begin{example} \label{dga which does not cogenerate}
    Let $q \in k^\times$ be not a root of unity, and consider the graded algebra (with zero differential)
    \[
    \Lambda = k\langle x, y \rangle / (x^2, y^2, xy - qyx), \qquad |x| = |y| = 2,
    \]
    the right $\Lambda$-module $V = (\Lambda/(x+y)\Lambda)[-1]$, and the dg algebra 
    \[
    A = \End_\Lambda(\Lambda \oplus V).
    \]
    Then $A$ is coconnective, with $H^0(A) = k \oplus k$ and $\dim_k H^*(A) = 11$.  
    Here we use cohomological grading, and ``coconnective'' means $H^i(A) = 0$ for $i < 0$. 
    Let $e \in A$ be the idempotent projecting onto the summand $\Lambda$, and 
    \[
    M = \cofib\left( Ae \otimes_{eAe} eA \longrightarrow A \right) \in \Mod_A .
    \]
    Then $M \ne 0$ (in fact $H^*(M) = k \oplus k[-1]$), but $\Hom_A(M, A) = 0$.  Thus $A$ does not cogenerate; it does not even cogenerate the pseudo-perfect modules (i.e. modules with finite dimensional cohomology).  Details are in Section \ref{sec: example dga}.  In Appendix \ref{app:finite-dg-model}, we show that in fact $A$ and $M$ admit (not just cohomologically) finite dimensional models. 
\end{example}

The cogeneration question is related to nondegeneracy of smooth-but-not-proper categories, and to the existence of fully faithful left Calabi-Yau structures:

\begin{proposition} \label{cogeneration versus nondegeneracy}
    Let $\mathcal{C}$ be a smooth proper dg category, and 
    $0 \to \mathcal{K} \to \mathcal{C} \to \mathcal{S} \to 0$ a localization sequence (it follows immediately that $\mathcal{K}$ is proper and $\mathcal{S}$ is smooth).  If $\Mod_{\mathcal{K}}$ is cogenerated by compact objects, then $\mathcal{S}^!$ is nondegenerate: if $\cS^!(M, \cdot) = 0$ then $M = 0$. 
\end{proposition}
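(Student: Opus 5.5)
The plan is to reduce everything to the smooth proper category $\mathcal{C}$, where the inverse dualizing bimodule is the inverse Serre functor and is invertible. We then use Serre duality to turn the cogeneration hypothesis on $\Mod_{\mathcal{K}}$ into a contradiction with $M$ being an $\mathcal{S}$-module.

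\emph{Step 1: set-up.} Write $j \colon \mathcal{C} \to \mathcal{S}$ for the localization. Restriction along $j$ identifies $\Mod_{\mathcal{S}}$ with the full subcategory of $\Mod_{\mathcal{C}}$ of modules $M$ with $\Hom_{\mathcal{C}}(k, M) = 0$ for all $k \in \mathcal{K}$. Induction $N \mapsto N \otimes_{\mathcal{C}} \mathcal{S}$ has kernel the localizing subcategory generated by $\mathcal{K}$, which is $\Mod_{\mathcal{K}}$. We also have $\mathcal{S} \otimes_{\mathcal{C}} \mathcal{S} \simeq \mathcal{S}$. We interpret $\mathcal{S}^!(M,\cdot)$ as the module $M \otimes_{\mathcal{S}} \mathcal{S}^!$.

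\emph{Step 2: compare $\mathcal{S}^!$ with $\mathcal{C}^!$.} Since $\mathcal{S} = \mathcal{S} \otimes_{\mathcal{C}} \mathcal{C} \otimes_{\mathcal{C}} \mathcal{S}$, adjunction gives
\[
\mathcal{S}^! = \Hom_{\mathcal{S}^e}(\mathcal{S}, \mathcal{S}\otimes\mathcal{S}) \simeq \Hom_{\mathcal{C}^e}(\mathcal{C}, \mathcal{S}\otimes \mathcal{S}).
\]
Because $\mathcal{C}$ is smooth, $\mathcal{C}$ is a perfect bimodule, and so the right-hand side is $\mathcal{S} \otimes_{\mathcal{C}} \mathcal{C}^! \otimes_{\mathcal{C}} \mathcal{S}$. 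Hence for $M \in \Mod_{\mathcal{S}}$,
\[
M \otimes_{\mathcal{S}} \mathcal{S}^! \simeq (M \otimes_{\mathcal{C}} \mathcal{C}^!) \otimes_{\mathcal{C}} \mathcal{S}.
\]
This identification, keeping track of which side each tensor is taken over, is the step I expect to require the most care.

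\emph{Step 3: conclude by Serre duality.} Suppose $\mathcal{S}^!(M,\cdot) = 0$ but $M \neq 0$. Since $\mathcal{C}$ is smooth and proper, $- \otimes_{\mathcal{C}} \mathcal{C}^!$ is the inverse Serre functor $\mathbb{S}^{-1}$, an autoequivalence of $\Mod_{\mathcal{C}}$. So $N := M \otimes_{\mathcal{C}} \mathcal{C}^! \neq 0$. By Step 2, $N \otimes_{\mathcal{C}} \mathcal{S} = 0$, so Step 1 gives $N \in \Mod_{\mathcal{K}}$. By hypothesis there is a compact $k \in \mathcal{K}$ with $\Hom(N, k) \neq 0$. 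This gives
\[
0 \neq \Hom_{\mathcal{C}}(\mathbb{S}^{-1} M, k) \simeq \Hom_{\mathcal{C}}(M, \mathbb{S} k) \simeq \Hom_{\mathcal{C}}(k, M)^\vee .
\]
The last isomorphism is Serre duality for $k$ compact and $M$ arbitrary. It holds because both sides send colimits in $M$ to limits (using compactness of $k$), and they agree on compact $M$. But $\Hom_{\mathcal{C}}(k, M) = 0$, since $M$ is an $\mathcal{S}$-module. This contradiction shows $M = 0$.
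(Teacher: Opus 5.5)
Your proof is correct and follows essentially the same route as the paper: identify $\Phi_\cS$ with $\pi\circ\Phi_\cC\circ\iota$, conclude that $\Phi_\cC(\iota M)\in\Mod_\cK$, use duality to get $\Hom(\Phi_\cC(\iota M),k)\simeq\Hom(k,\iota M)^\vee=0$ for compact $k$, and invert $\Phi_\cC$. The differences are cosmetic. You reprove the identification in Step 2 by adjunction, where the paper cites Keller. You argue by contradiction. And you use Serre duality on the smooth proper $\cC$, where the paper uses Keller's lemma $\Hom(L\otimes_\cC\cC^!,h_K)\simeq\Hom(h_K,L)^\vee$; these amount to the same statement here.
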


Note that if $\cK$ is generated by some compact objects $\{k_i\}$, then compact cogeneration of $\cK$ is equivalent to asking that $A = \End(\oplus k_i)$ cogenerates; in case there are finitely many $k_i$, said $A$ is necessarily a dg algebra with finite dimensional cohomology.

\begin{proposition}\label{universal ff lcy}  (Lemmas \ref{lem: kernel of IDB} and \ref{lem: key})
    Let $\mathcal{S}$ be a smooth dg category, and $\cS^!$ be its dualizing bimodule.  Then the image of any fully faithful left Calabi-Yau morphism to $\cS$ generates the kernel of $(-)\otimes_\cS\cS^!$ as a localizing subcategory of $\Mod_\cS$.
    A sort of converse: if $\mathcal{Y}$ is contained in the kernel of $(-)\otimes_\cS \cS^!$, then the relative left Calabi-Yau completion  $\Pi_{n-1}(\mathcal{Y}) \to \Pi_{n}(\mathcal{S}, \mathcal{Y})$ is fully faithful. 
\end{proposition}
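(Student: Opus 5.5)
The plan is to prove the two halves separately. Both rest on the exact triangle of $\cS$-bimodules that a relative left Calabi--Yau structure provides.

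\textbf{First half.} Let $f\colon \mathcal{Y}\to\cS$ be fully faithful and left $n$-Calabi--Yau, with induction $f_!$ and restriction $f^*$. By Brav--Dyckerhoff nondegeneracy, the boundary structure identifies two triangles up to shift. One is the counit triangle
\[
\cS\otimes_{\mathcal{Y}}\cS \to \cS \to \cS/\mathcal{Y},
\]
where $\cS/\mathcal{Y}$ is the bimodule computing the localization $\Mod_\cS\to\Mod_{\cS}/\langle f_!\mathcal{Y}\rangle$. The other is the dual triangle
\[
\cS^![n] \to f_!(\mathcal{Y}^![n-1])[1] \to \cdots .
\]
I will use the resulting identification $\cS/\mathcal{Y}\simeq \cofib\big(\cS^![n]\to f_!\mathcal{Y}^![n]\big)$, up to the precise shift. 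This is the one input I take from the definition of a relative Calabi--Yau structure.

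\emph{Kernel is generated by the image.} Let $M$ satisfy $M\otimes_\cS\cS^!=0$. Tensoring the identification with $M$ gives
\[
M\otimes_\cS(\cS/\mathcal{Y}) \simeq (M\otimes_\cS f_!\mathcal{Y}^!)[\ast] = f_!\big(f^*M\otimes_{\mathcal{Y}}\mathcal{Y}^!\big)[\ast].
\]
The left side is the localization of $M$ away from $f_!\Mod_{\mathcal{Y}}$, so it is right orthogonal to the image of $f$. The right side lies in $\Loc(f_!\mathcal{Y})$. An object that is both right orthogonal to $\Loc(f_!\mathcal{Y})$ and contained in it is zero. Hence $M\otimes_\cS(\cS/\mathcal{Y})=0$, which means $M\in\Loc(f_!\mathcal{Y})$.

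\emph{The image lies in the kernel.} I apply $f_!N\otimes_\cS(-)$ to the triangle and use full faithfulness, $f^*f_!\simeq\mathrm{id}$. The term $f_!N\otimes_\cS(\cS/\mathcal{Y})$ vanishes. The counit map then becomes an equivalence, which forces $f_!N\otimes_\cS\cS^!\simeq 0$.

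\textbf{Converse.} Suppose $\mathcal{Y}\subset\ker(-\otimes_\cS\cS^!)$. I will write the relative Calabi--Yau completion as a derived tensor category
\[
\Pi_n(\cS,\mathcal{Y}) = T_\cS(\Theta),
\]
where $\Theta$ is the shifted cofiber of $f_!\mathcal{Y}^![n-2]\to\cS^![n-1]$, as in Yeung's construction. The relevant morphism complexes between objects of $\mathcal{Y}$ are $f^*\big(\Theta^{\otimes_\cS k}\big)f$, and I will filter each one by the two-step filtration of $\Theta$.
\begin{itemize}
\item Any tensor word containing a factor $\cS^!$ is killed once it is evaluated on objects of $\mathcal{Y}$. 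This is exactly the hypothesis that $\mathcal{Y}\otimes_\cS\cS^!=0$.
\item In the remaining words, full faithfulness $f^*f_!=\mathrm{id}$ collapses $f_!\mathcal{Y}^!\otimes_\cS\cdots\otimes_\cS f_!\mathcal{Y}^!$ to $(\mathcal{Y}^![n-1])^{\otimes_{\mathcal{Y}}k}$.
\end{itemize}
Together these identify the restriction with $T_{\mathcal{Y}}(\mathcal{Y}^![n-2]) = \Pi_{n-1}(\mathcal{Y})$, compatibly with the natural map.

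\textbf{Main obstacle.} I expect the hardest step to be the bookkeeping in the converse. One must check that the vanishing of words containing $\cS^!$ is compatible with the multiplication, and not just with the underlying graded pieces. I would handle this by proving that the map of tensor algebras $T_{\mathcal{Y}}(\mathcal{Y}^![n-2])\to f^*T_\cS(\Theta)f$ is an equivalence on each associated graded piece of the tensor-length filtration, and then conclude by a colimit argument.
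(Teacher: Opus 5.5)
Your converse follows the paper's route. The first half, however, has a genuine gap: the one input you take from the Calabi--Yau structure is wrong. The nondegeneracy diagram \eqref{eq: left CY nondegeneracy} matches the two rows term by term. It gives $\cS^![n-1]\simeq\fib(c)$, $f_!\mathcal{Y}^![n-1]\simeq f_!(\mathcal{Y}_\Delta)$ and $\cofib(c^!)[n-1]\simeq\cS_\Delta$, hence $\cS/\mathcal{Y}=\cofib(c)\simeq\cS^![n]$. Your identification $\cS/\mathcal{Y}\simeq\cofib(\cS^![n]\to f_!\mathcal{Y}^![n])$ instead pairs $\cofib(c)$ with $\cofib(c^!)$, i.e.\ with a shift of $\cS_\Delta$. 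This fails as soon as $\mathcal{Y}\neq 0$, since $h_{f(a)}\otimes_\cS(\cS/\mathcal{Y})=0$ while $h_{f(a)}\otimes_\cS\cS_\Delta=h_{f(a)}$. It even contradicts the statement you want. Tensoring your identification with $f_!N$ and using $f^*f_!\simeq\mathrm{id}$ yields $f_!N\otimes_\cS\cS^!\simeq f_!(N\otimes_{\mathcal{Y}}\mathcal{Y}^!)$ up to shift. That is $f_!N[1-n]\neq 0$, because the structure also gives $\mathcal{Y}^![n-1]\simeq\mathcal{Y}_\Delta$. So your step ``the image lies in the kernel'' does not follow from the triangle you wrote; it tacitly uses $\cofib(c)\simeq\cS^![n]$. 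Your orthogonality argument for the other inclusion rests on a false equivalence.

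With the correct identification no orthogonality trick is needed, and this is the paper's proof of Lemma~\ref{lem: kernel of IDB}. Tensoring the bottom row with $M$ gives a triangle $(M|_{\mathcal{Y}})\otimes_{\mathcal{Y}}\cS\to M\to M\otimes_\cS\cS^![n]$ whose first map is the counit. Hence $\Phi_\cS[n]\simeq q^*q_!$ and $\ker\Phi_\cS=\ker q_!=\Loc\{h_{f(a)}:a\in\mathcal{Y}\}$, which gives both inclusions at once.

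For the converse you do what Lemma~\ref{lem: key} does. After applying $h_{f(a)}\otimes_\cS-$, every word containing $\cS^!$ dies, because $h_{f(a)}\otimes_\cS(f_!\mathcal{Y}^!)^{\otimes j}$ lies in $\Loc\{h_{f(a')}:a'\in\mathcal{Y}\}\subset\ker\Phi_\cS$. Full faithfulness then collapses the remaining word. Fix two slips:
\begin{itemize}
\item $\Theta[n-1]=\cofib(c^![n-2])$ has $f_!\mathcal{Y}^![n-2]$ as sub and $\cS^![n-1]$ as quotient, not the reverse. This direction is what makes $f_!\mathcal{Y}^![n-2]\to\Theta[n-1]$ the completion map.
\item The shift should be $[n-2]$ throughout, not $[n-1]$.
\end{itemize}
Your ``main obstacle'' is not one. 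The completion functor is already a dg functor preserving tensor length, so full faithfulness only needs a quasi-isomorphism on each tensor-length summand of the Hom complexes.
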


In fact, ChatGPT also found the following: 

\begin{example} \label{eg:nontrivial ff lcy}
    Take 
    $
    R = k\langle x, y, \delta \rangle / ( [x, y], [\delta, x] - 1, [\delta, y] + y )
    $
    and set
    \[
    M_\lambda = R/(\delta-\lambda)R, \qquad
    \cB = \Idem(\Perf(R)/M_0), \qquad X = \text{ the image of } M_1 \text{ in } \cB, 
    \]
    where $\Idem$ means taking idempotent completion. 
    Then $\cB$ is smooth, $X \neq 0$ has $\End_\cB(X) = k$, and $
    \cB^!(X, -) = 0$.  
    Thus by Proposition~\ref{universal ff lcy}, the relative Calabi-Yau completion of the functor $k \xrightarrow{X} \cB$, 
    \[
    \cX := \Perf(k[z]) = \Perf(\Pi_1(k)) \longrightarrow \cS := \Perf(\Pi_2(\cB, k)), \qquad |z| = 0, 
    \]
    is a fully faithful nonzero left Calabi-Yau morphism. 
    
    In fact, $\cS$ is a localization of $\Perf(\Gamma)$, for a finite dimensional dg algebra $\Gamma$ with $\Perf(\Gamma)$ smooth and proper, with kernel generated by a single object $Z$ (Proposition \ref{prop: compactification}).  By Proposition \ref{cogeneration versus nondegeneracy}, $\End_\Gamma(Z)$ is another proper dg algebra which does not cogenerate.   
\end{example}

Our interest in Nonjecture \ref{proper dgas cogenerate} arises originally from Arnol'd's chord conjecture \cite{Arnold} in symplectic topology, which asserts that any compact Legendrian in any compact contact manifold has a Reeb chord --- had the Nonjecture been true, we could establish the existence of chords for contact manifolds which bound Weinstein domains.  We sketch an account of this in Appendix \ref{sec: chord}.  

In this context we note also that our examples here seem all closely related to algebras of differential operators.  We recall that, as noted in \cite{ganatra-nonproper}, these have a property obstructing an algebraic approach to a closely related conjecture in symplectic topology (existence of Reeb orbits  \cite{weinstein-orbit}).  Indeed, the algebra of differential operators $\mathcal{D}_X$ on a smooth affine algebraic variety $X$ is smooth, Calabi-Yau of dimension $2 \dim X$, obviously not proper, and yet, per \cite[Theorem 2]{Wodzicki1987}, 
has finite dimensional Hochschild homology $
HH_i(\mathcal{D}_X) \cong H_{\mathrm{dR}}^{2\dim X-i}(X)$.


\vspace{2mm}
\noindent {\bf{Acknowledgements.}} 
M. H. and V. S.  are supported by Villum Fonden Villum Investigator 37814. 

\section{A Cogeneration result: Proof of Proposition \ref{prop: bounded below}} \label{sec: bounded below}

Let $A$ be a dg algebra.  In this paper,
all modules are right modules, and we always use cohomological grading. We say, $A$ is \emph{coconnective} $H^i (A) = 0$ for $ i < 0$, and \emph{connective} if $H^i(A) = 0$ for $i>0$. 

Recall \cite[B.1]{Drinfeld-dg} that a dg $A$-module $F$ is \emph{semi-free} if $F = \colim_j F_{(j)}$ for a sequence of dg submodules $0 = F_{(0)} \subset F_{(1)} \subset \cdots$, where each $F_{(j+1)}$ is obtained from $F_{(j)}$ by adjoining free homogeneous generators $v$ with $dv \in F_{(j)}$; that is, $F_{(j+1)} = F_{(j)} \oplus (W_j \otimes A)$ as graded $A$-modules, where $W_j$ is the span of the new generators.  Thus $F = V \otimes A$ as a graded $A$-module, where $V = \bigoplus_j W_j$.  A \emph{semi-free resolution} of $M$ is a quasi-isomorphism $F \to M$ with $F$ semi-free; for such $F$, the complex $\Hom_A(F, -)$ computes the derived $\Hom$ \cite[B.2]{Drinfeld-dg}.

\begin{definition}
    Let $A$ be a dg algebra and let $M$ be an $A$-module with $H^*(M) \ne 0$ bounded below; let $
n = \min\{r : H^r(M) \ne 0\}$.
    We say a semi-free resolution $f : F \to M$ {\em has visible bottom} if: 
    all  generators have degree $\ge n$ and some closed degree $n$ generator $v_0 \in F_{(1)}$ represents a nonzero class in $H^n(M)$.
\end{definition}

\begin{lemma}
    Let $A$ be a dga with $H^*(A)$ bounded, and $N = \max\{r : H^r(A) \ne 0\}$. 
    Let $M\in \Mod_A$ with $H^*(M) \ne 0$ bounded below, and $n = \min\{r : H^r(M) \ne 0\}$. 
    Assume $M$ admits a semi-free resolution with visible bottom. 
    Then there is a degree $N-n$ map $\phi \in \Hom_A(F, A)$ that induces a nontrivial map on $H^{*}(M) \to H^{*+N-n}(A)$. In particular, $\Hom_A(M, A) \not\simeq 0$. 
\end{lemma}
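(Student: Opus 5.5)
The plan is to build the map $\phi \colon F \to A$ by hand, one generator at a time, following the semi-free filtration $F_{(0)} \subset F_{(1)} \subset \cdots$. The only property of $A$ we use is that $H^r(A) = 0$ for $r > N$. This lets us solve the equations forced by closedness of $\phi$ in degrees strictly above $N$.

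First choose a cocycle $a \in A^N$ with $[a] \ne 0$ in $H^N(A)$. The stage $F_{(1)} = W_0 \otimes A$ is generated by closed generators, because $dv \in F_{(0)} = 0$. Choose a basis of $W_0$ containing $v_0$. Set $\phi(v_0) = a$, set $\phi(v) = 0$ for the other basis elements, and extend $A$-linearly. Then $\phi$ has degree $N - n$: the only nonzero value sits on the degree $n$ generator $v_0$ and lies in $A^N$. It is also a chain map (up to the usual sign $d\phi = \pm \phi d$), since $a$ is closed and the generators are closed.

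Inductively, suppose $\phi$ has been defined on $F_{(j)}$ as a closed degree $N-n$ element of $\Hom_A(F_{(j)}, A)$. For each new basis generator $v \in W_j$, of degree $d \ge n$, we have $dv \in F_{(j)}$. So $\phi(dv) \in A^{d+1+N-n}$ is already defined, and it is a cocycle because $\phi$ is closed on $F_{(j)}$ and $d^2 = 0$. Its degree is $d + 1 + N - n \ge N + 1$, so its class vanishes in $H^{>N}(A) = 0$. We may therefore choose $b_v \in A^{d+N-n}$ with $d b_v = \pm \phi(dv)$, using the sign dictated by the convention for closed maps of degree $N-n$. Setting $\phi(v) = b_v$ extends $\phi$ $A$-linearly to a closed map on $F_{(j+1)} = F_{(j)} \oplus (W_j \otimes A)$. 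These extensions are compatible, so they define $\phi$ on $F = \colim_j F_{(j)}$.

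The degree hypothesis in the definition of visible bottom is exactly what makes this work. Because every generator has degree $\ge n$, each obstruction $\phi(dv)$ lands in degree $\ge N+1$, where $A$ has no cohomology. The only care needed is bookkeeping of Koszul signs, so that the chosen primitives really make $\phi$ closed. I expect this sign check, rather than any conceptual difficulty, to be the main point to verify.

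Finally, $\phi$ induces a map $H^*(M) \cong H^*(F) \to H^{*+N-n}(A)$ that sends the class of $v_0$ (nonzero in $H^n(M)$ by assumption) to $[a] \ne 0$. A null-homotopic map induces zero on cohomology, so $[\phi] \ne 0$ in $H^{N-n}\Hom_A(F, A)$. Since $F$ is semi-free, this complex computes the derived $\Hom_A(M, A)$, and hence $\Hom_A(M, A) \not\simeq 0$.
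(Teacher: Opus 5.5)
Your proposal is correct and is essentially the paper's proof: you build $\phi$ generator by generator along the semi-free filtration, and you use $H^{>N}(A)=0$ to kill the obstructions $\phi(dv)$. These obstructions land in degree $\ge N+1$ because every generator has degree $\ge n$, and you make the same sign choice $\phi(v)=(-1)^{N-n}c$. You also spell out the final step that the paper leaves implicit: $[v_0]\mapsto[a]\neq 0$, so $\phi$ is not null-homotopic.
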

\begin{proof}
    Choose a cocycle $a \in A^N$ with $[a] \ne 0$.  We construct $\phi$ with $\phi(v_0) = a$ 
    by induction. 

    Assume $F_{(j+1)} = F_{(j)} \oplus W_j \otimes A$ as a linear space. 
    On $F_{(1)} = W_0 \otimes A$ every generator is a cocycle; put $\phi(v_0) = a$, $\phi(v) = 0$ for the other generators of $W_0$, and extend $A$-linearly. 
    Suppose $\phi$ is defined on $F_{(j)}$ and let $v$ be a generator of $W_j$.  Then $dv \in F_{(j)}$, so $\phi(dv)$ is defined and it is a cocycle of degree $|v| + 1 + N - n \ge N + 1$.  As $H^{>N}(A) = 0$, there is $c \in A$ with $d_A c = \phi(dv)$; put $\phi(v) = (-1)^{N-n} c$.  Extending $A$-linearly defines a chain map $\phi: F_{(j+1)} \to A$.  By induction, we obtain a map $F_{(k)} \to A$ for each $k$, sending $v_0$ to $a$. Taking the colimit gives the desired map.
\end{proof}

The following is an application of a standard technique, see e.g. \cite{MaoWu2008}, although we did not find this precise assertion in the literature. 

\begin{lemma} \label{lem: semi-free resolution}
    Let $A$ be a coconnective dg algebra with $H^0(A) = k$ and $H^*(A)$ bounded above.  Then any $A$-module $M$ with $H^*(M) \ne 0$ bounded below admits a semi-free resolution with visible bottom.
\end{lemma}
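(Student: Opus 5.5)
First replace $A$ by a convenient quasi-isomorphic model; this is harmless, because quasi-isomorphisms of dg algebras induce equivalences of module categories that preserve semi-freeness and degrees of generators up to the obvious base change. Since $A$ is coconnective with $H^0(A)=k$, there is a minimal model with $A^{<0}=0$ and $A^0=k$. One way to get it: take a minimal $A_\infty$ model on $H^*(A)$, which is strictly unital and concentrated in degrees $\ge 0$ with degree-$0$ part $k$, then rectify via the bar--cobar construction. Alternatively, truncate $\tau^{\ge 0}$ and kill the degree-$0$ excess by the standard argument for coconnective algebras. After this, $A=k\oplus A^{\ge1}$ and $d$ maps $A^{\ge 1}$ into $A^{\ge 2}$. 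The key consequence is that for any generator $v$ of degree $m$, the free module $vA$ lives in degrees $\ge m$.

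Next build the semi-free resolution by the usual "kill cohomology from the bottom up" procedure, keeping track of degrees. Let $n$ be the lowest degree with $H^n(M)\neq 0$.

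\textbf{Step $F_{(1)}$.} Choose closed cocycles in $M$ representing a basis of $H^n(M)$, together with closed representatives of a basis of $H^r(M)$ for every $r\ge n$. Let $F_{(1)}=W_0\otimes A$ map to $M$ accordingly; this is surjective on cohomology. Any generator in degree $n$ representing a nonzero class of $H^n(M)$ serves as $v_0$.

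\textbf{Inductive step.} Given $F_{(j)}\to M$ surjective on cohomology, adjoin generators $w$ with $dw=z$ for cocycles $z\in F_{(j)}$ representing a basis of $\ker(H^*(F_{(j)})\to H^*(M))$. The map on $w$ is to a chosen $m\in M$ with $dm=f(z)$.

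The colimit $F$ is then a quasi-isomorphism onto $M$ by the standard argument: every kernel class is killed at the next stage, and cohomology commutes with filtered colimits.

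The degree constraint needs a lower bound. By induction, all generators have degree $\ge n$, so $F_{(j)}$ is concentrated in degrees $\ge n$ by the minimal-model observation. Kernel classes in degree $r$ require new generators of degree $r-1$, so I must show the kernel vanishes in degree $n$. Indeed $F_{(j)}^n$ is spanned by the degree-$n$ generators times $A^0=k$, since lower generators do not exist and $A^{<0}=0$. Hence $H^n(F_{(j)})$ is a quotient of the span of the degree-$n$ generators. Degree-$n$ generators all lie in $W_0$: later generators have degree $r-1$ with $r\ge n+1$, so at least $n$. Their span maps isomorphically onto $H^n(M)$, as the chosen basis. So the kernel in degree $n$ is zero and all new generators have degree $\ge n$. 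It follows that $v_0\in F_{(1)}$ is a closed degree-$n$ generator representing a nonzero class, giving visible bottom. Boundedness above of $H^*(A)$ is not even needed here; it is only used in the preceding lemma.

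The main obstacle I expect is the first step: justifying the strict model with $A^0=k$ and $A^{<0}=0$, rather than merely cohomological conditions. Without it, $F^{n}$ could receive contributions $v\cdot a$ with $|a|<0$, and the degree bookkeeping breaks. I would cite the standard minimal-model/rectification result, or the truncation argument as in \cite{MaoWu2008}, and check that the module category is unchanged up to equivalence.
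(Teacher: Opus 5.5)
Your argument is correct in outline, but it takes a different route from the paper. You first replace $A$ by a strictly connected model ($A^{<0}=0$, $A^0=k$) and then do the degree bookkeeping at chain level. The paper never changes the model. It tracks $C_{(j)}=\cone(f_j)$, adjoins generators for a basis of all of $H^{\ge n}(C_{(j)})$, and uses the sequence $0\to C_{(j)}\to C_{(j+1)}\to (W_j\otimes A)[1]\to 0$ together with $H^*(W_j\otimes A)\cong W_j\otimes H^*(A)$. Since $W_j$ sits in degrees $\ge n$, the purely cohomological hypotheses $H^{<0}(A)=0$ and $H^0(A)=k$ give $H^{<n}(W_j\otimes A)=0$ and $H^n(W_j\otimes A)=W_j^n$. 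So the connecting map is an isomorphism in degree $n$, and the invariant $H^{<n}(C_{(j)})=0$ propagates. That invariant is exactly the cohomological form of your claim that $H^n(F_{(j)})\to H^n(M)$ is injective. So the obstacle you anticipate (terms $v\cdot a$ with $|a|<0$) only matters at chain level, and the paper needs no rectification at all. Your route buys the sharper chain-level conclusion that $F^{<n}=0$ and $F^n$ is spanned by the degree-$n$ generators, at the cost of that first step.

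If you keep your route, the first step needs more care than you give it. The smart truncation $\tau^{\ge 0}$ is not multiplicative: its kernel $A^{<0}+dA^{-1}$ is in general not an ideal, since it would have to absorb $A^{-1}\cdot A^{2}\subset A^{1}$. The naive subalgebra $A^{\ge 0}$ has the wrong $H^0$, namely $Z^0(A)$. Also, a resolution transports with its generator degrees by base change $F'\otimes_{A'}A$ along a dg quasi-isomorphism $A'\to A$ (resolve the restriction of $M$ to $A'$), not along an arbitrary zigzag, so you want a direct map $A'\to A$. Such a map exists. One option is $\Omega B H^*(A)\to A$, obtained via the bar--cobar adjunction from a strictly unital minimal model. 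Another is $T(V)\to A$ with $V$ in degrees $\ge 1$, built by the same cell-attaching argument you use for $M$. (Minor slip: since $F_{(j)}^{n-1}=0$, $H^n(F_{(j)})$ is a subspace of $F_{(j)}^n$, not a quotient.)
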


\begin{proof}
    We construct semi-free modules $F_{(0)} \subset F_{(1)} \subset \cdots$ and compatible maps $f_j : F_{(j)} \to M$ such that $C_{(j)} = \cone(f_j)$ satisfies $H^{<n}(C_{(j)}) = 0$. 
    Moreover, the induced map $H^*(C_{(j)})\to H^*(C_{(j+1)})$ is always $0$. This implies that 
    $\cofib( \colim_j F_{(j)} \to M) = \colim (C_{(0)} \to C_{(1)} \to C_{(2)} \to \cdots ) \simeq 0$. Hence we can take $F = \colim_j F_{(j)}$ with the natural map $f: F \to M$. 
    
    Start with $F_{(0)} = 0$, so that $C_{(0)} = M$.  
    Assume $F_{(j)}$ and $f_j$ are given.
    For each $r\ge n$ choose cycles $(y_i, z_i) \in \cone(f_j) = M \oplus F_{(j)}[1]$ representing a basis of $H^r(C_{(j)})$, and for each of them adjoin to $F_{(j)}$ a free generator $v_i$ of degree $r$ with
    \[
    dv_i = z_i, \qquad f_{j+1}(v_i) = y_i.
    \]
    Since $dz_i = 0$ and $f_{j+1}(dv_i) = f_j(z_i) = dy_i$, this defines a dg module $F_{(j+1)}$ and a chain map $f_{j+1}$ extending $f_j$.  Let $W_j$ be the span of the new generators, a graded vector space concentrated in degrees $\ge n$, so that $F_{(j+1)} / F_{(j)} \cong W_j \otimes A$.
    The inclusion $F_{(j)} \subset F_{(j+1)}$ gives a short exact sequence
    \[
    0 \to C_{(j)} \to C_{(j+1)} \to (W_j \otimes A)[1] \to 0.
    \]
    The connecting map $\partial : H^r(W_j \otimes A) \to H^r(C_{(j)})$ sends $[v_i \otimes 1]$ to $[(y_i, z_i)]$, up to sign. By our construction, it is always a surjection. Hence $H^*(C_{(j)}) \to H^*(C_{(j+1)} )$ is zero. 

    Note that $H^n(W_j\otimes A)=W_j^n$, with basis the classes $[v_i\otimes1]$ of the generators of degree $n$. These were adjoined one for each element of a basis $[(y_i,z_i)]$ of $H^n(C_{(j)})$, and $\partial[v_i\otimes1]=\pm[(y_i,z_i)]$, so $\partial$ is an isomorphism in degree $n$, which implies that $H^{<n} (C_{(j+1)}) = 0$ (this is where we use $H^0(A) = k$). Therefore, $F_{(j+1)}$ satisfies all the prescribed conditions.

    Finally, the generators of degree $n$ adjoined at the first stage are indexed by a basis of $H^n(C_{(0)}) = H^n(M) \ne 0$; let $v_0$ be one of them.  Then $f(v_0)$ represents a nonzero class in $H^n(M)$.
\end{proof}

Proposition  \ref{prop: bounded below} now follows by combining the above two results. 

\begin{remark}
    The hypothesis $H^0(A) = k$ is  used in the proof of Lemma \ref{lem: semi-free resolution}  to ensure that no generators of degree $< n$ are needed.
    The  dg algebra in Example \ref{dga which does not cogenerate} is coconnective, has $H^0(A) = k \oplus k$, the module $M$ is bounded, and $\Hom_A(M, A) = 0$, so the hypothesis of Lemma \ref{lem: semi-free resolution} cannot be weakened even to asking $H^0(A)$ semisimple.
\end{remark}

\section{Example \ref{dga which does not cogenerate}} \label{sec: example dga}


Let $\Lambda$ be a dg algebra and $V$ a dg $\Lambda$-module.  Put
\[
V^* = \Hom_\Lambda(V, \Lambda), \qquad P = \Lambda \oplus V, \qquad A = \End_\Lambda(P),
\]
and let $e \in A$ be the projection onto $\Lambda$, so that $eAe = \Lambda$, $Ae = P$ and $eA = \Hom_\Lambda(P, \Lambda)$, with their indicated bimodule structures.  Put
\[
M = \cofib\left( Ae \otimes_{eAe} eA \to A \right) \in \Mod_A.
\]

\begin{lemma} \label{lem: general noncogeneration} We have the following
    \begin{enumerate}
        \item As a complex, $A \simeq \Lambda \oplus V \oplus V^* \oplus \End_\Lambda(V)$.  In particular $A$ is proper if and only if $\Lambda$, $V$, $V^*$ and $\End_\Lambda(V)$ have finite dimensional cohomology.
        \item $\Hom_A(M, A) = 0$.
        \item $M \simeq \cofib\left( \mathrm{ev} : V \otimes_\Lambda V^* \to \End_\Lambda(V) \right)$, so $M \ne 0$ if and only if $V$ is not perfect.
    \end{enumerate}
    Consequently, if $\Lambda$ is proper and $V$ is a $\Lambda$-module which is not perfect, but for which $V$, $\Hom_\Lambda(V, \Lambda)$ and $\End_\Lambda(V)$ have finite dimensional cohomology, then $A = \End_\Lambda(\Lambda \oplus V)$ is a proper dg algebra which does not cogenerate $\Mod_A$.
\end{lemma}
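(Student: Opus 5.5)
Write $A = \End_\Lambda(P)$ with $P = \Lambda \oplus V$. The plan is to decompose everything by the idempotents $e$ and $1-e$ and use the standard recollement attached to $e$. I take $\Lambda$ and $V$ to be cofibrant, or replace them by semi-free models, so that $A$ is a genuine dg algebra and $e$ is a strict idempotent.

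\emph{Part (1).} Decompose $A$ as a $2\times 2$ matrix $\Hom_\Lambda(P_i,P_j)$ with $P_1 = \Lambda$ and $P_2 = V$. This gives
\[ A \simeq \Hom_\Lambda(\Lambda,\Lambda)\oplus \Hom_\Lambda(\Lambda,V)\oplus\Hom_\Lambda(V,\Lambda)\oplus \End_\Lambda(V) \simeq \Lambda\oplus V\oplus V^*\oplus\End_\Lambda(V). \]
The properness statement then follows immediately. Here $eA = \Lambda \oplus V^*$ and $(1-e)A = V \oplus \End_\Lambda(V)$, using that $\Hom_\Lambda(P,-)$ applied to the two summands corresponds to right multiplication by idempotents. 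The exact placement of summands depends on conventions, which I will fix once.

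\emph{Part (2).} The functor $j^* = (-)\otimes_A Ae \colon \Mod_A \to \Mod_{eAe}$ has left adjoint $j_! = (-)\otimes_{eAe} eA$ and right adjoint $j_*$. The map $Ae\otimes_{eAe} eA \to A$ is the counit $j_!j^*A \to A$. Since $j^*$ of this counit is an equivalence ($eA\otimes_A Ae = eAe$), we get $M e = j^*M = 0$. Hence $M$ lies in the kernel of $j^*$, so $\Hom_A(M, N) = 0$ for any $N$ in the essential image of $j_*$, by adjunction.

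It therefore suffices to show $A \simeq j_*(j^*A) = \Hom_{\Lambda}(Ae, A e)$ with its induced $A$-module structure, i.e.\ that the unit $A \to j_*j^*A$ is an equivalence. Here $Ae = P$ as a $\Lambda$-module, and $j_*(P) = \Hom_{\Lambda}(eA, P)$. Concretely, $eA$ corresponds to $\Hom_\Lambda(P,\Lambda)$ as a left $\Lambda$-module. So $j_*j^*A$ should be $\Hom_{\Lambda}(\Hom_\Lambda(P,\Lambda)\text{-ish}, P)$, and I want it to return $\End_\Lambda(P)=A$.

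The cleaner route is to identify $\Mod_A$ with modules via the functor $\Hom_\Lambda(P,-)\colon \Mod_\Lambda \to \Mod_A$. This functor is fully faithful on the thick subcategory generated by $P$ and sends $\Lambda \mapsto eA$, $P \mapsto A$. Under it, $j^*$ becomes evaluation at $\Lambda$, and $j_*$ becomes $X \mapsto \Hom_\Lambda(P, X)$. Then $A = \Hom_\Lambda(P,P) = j_*(P)$ lies in the image of $j_*$, which gives (2).

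Checking that $j_*$ really is $\Hom_\Lambda(P,-)$, i.e.\ right adjoint to $-\otimes_A Ae$, is the main point. It is just the tensor–hom adjunction $\Hom_\Lambda(N\otimes_A P, X) = \Hom_A(N, \Hom_\Lambda(P,X))$, together with $N \otimes_A Ae = N\otimes_A P$ as $\Lambda$-modules, since $Ae = P$ as an $A$–$\Lambda$ bimodule. So it should go through cleanly.

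\emph{Part (3).} Compute the counit summand-wise by multiplying on the left by the idempotents.
\begin{itemize}
\item On the $e$-component, $eAe\otimes_{eAe} eA \to eA$ is an isomorphism.
\item On the $(1-e)$-component, the map is $(1-e)Ae\otimes_{\Lambda} eA \to (1-e)A$. Using part (1), this is $V\otimes_\Lambda(\Lambda\oplus V^*) \to V\oplus\End_\Lambda(V)$.
\end{itemize}
The $\Lambda$-summand maps isomorphically onto $V$, and the remaining piece is the evaluation/composition map $V\otimes_\Lambda V^*\to\End_\Lambda(V)$. Hence $M\simeq \cofib(\mathrm{ev})$ as complexes.

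Finally, $\mathrm{ev}$ is an equivalence if and only if $V$ is perfect. If $V$ is perfect, this is standard, as the statement is closed under thick closure and true for $\Lambda$. Conversely, if $\mathrm{ev}$ is an equivalence, then $\mathrm{id}_V$ lies in the image of a colimit of maps factoring through $\Lambda$. The standard argument via compactness shows that $V$ is then a retract of a perfect module, hence perfect. More directly, $V\otimes_\Lambda V^*\otimes_\Lambda(-)\simeq \Hom_\Lambda(V,-)$ forces $\Hom_\Lambda(V,-)$ to commute with colimits. This converse direction is the step I expect to need the most care.

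The final consequence combines (1)–(3).
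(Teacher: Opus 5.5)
Your proposal is correct and is essentially the paper's proof: the recollement language in (2) amounts to the same adjunction $\Hom_A(M,\Hom_\Lambda(P,P))\simeq\Hom_\Lambda(M\otimes_A P,P)$ with $M\otimes_A P = Me = 0$, and (3) is the same summand-by-summand identification of the counit. Discard the stray formula $j_*(P)=\Hom_\Lambda(eA,P)$ in favour of $j_*=\Hom_\Lambda(Ae,-)$. Your retract argument for ``$\mathrm{ev}$ an equivalence $\Rightarrow$ $V$ perfect'' fills in a step the paper leaves implicit: write $V=\colim V_i$ with $V_i$ perfect, lift a preimage of $\mathrm{id}_V$ to some $V_i\otimes_\Lambda V^*$, and conclude that $\mathrm{id}_V$ factors through $V_i$. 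Keep that argument and drop the ``more directly'' variant, which does not follow from the isomorphism at $V$ alone.
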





\begin{proof}
    (1) $\End_\Lambda(\Lambda \oplus V)$ is the direct sum of $\End_\Lambda(\Lambda) = \Lambda$, $\Hom_\Lambda(\Lambda, V) = V$, $\Hom_\Lambda(V, \Lambda) = V^*$ and $\End_\Lambda(V)$.

    (2) By the tensor-hom adjunction,
    \[
    \Hom_A(M, A) \simeq \Hom_A(M, \Hom_\Lambda(P, P)) \simeq \Hom_\Lambda(M \otimes_A P, P).
    \]
    Now $eA \otimes_A P = eP = \Lambda$, so
    \[
    (Ae \otimes_{eAe} eA) \otimes_A P = Ae \otimes_{eAe} eP = P \otimes_\Lambda \Lambda = P,
    \]
    and the map to $A \otimes_A P = P$ is the identity.  So $M \otimes_A P = 0$, and $\Hom_A(M, A) = 0$.

    (3) Since $Ae = \Lambda \oplus V$ and $eA = \Lambda \oplus V^*$ as right and left $\Lambda$-modules, respectively,
    \[
    Ae \otimes_{eAe} eA = (\Lambda \oplus V) \otimes_\Lambda (\Lambda \oplus V^*) = \Lambda \oplus V \oplus V^* \oplus (V \otimes_\Lambda V^*),
    \]
    and under (1) the map $Ae \otimes_{eAe} eA \to A$ is the identity on the first three summands and $\mathrm{ev}$ on the last.  Hence $M \simeq \cofib(\mathrm{ev})$.
\end{proof}

We now check the hypotheses of Lemma \ref{lem: general noncogeneration} for 
Example \ref{dga which does not cogenerate}. 

Recall that $\Lambda = k\langle x, y \rangle/(x^2, y^2, xy - qyx)$, $|x| = |y| = 2$, and $V = (\Lambda/(x+y)\Lambda)[-1]$.  Here $\Lambda$ is four dimensional, hence proper, and $V$ is two dimensional: explicitly,
\[
V = ku \oplus kv, \qquad |u| = 1, \quad |v| = 3, \qquad ux = -v, \quad uy = v, \quad vx = vy = 0.
\]

We first construct a semi-free resolution of $V$.  A direct computation in $\Lambda$ shows that the right annihilator of $x + (-q)^{-n}y$ is $(x + (-q)^{-n-1}y)\Lambda$.  It follows that 
\begin{equation} \label{eq: resolution of V}
F = \bigoplus_{n \ge 0} e_n \Lambda, \qquad |e_n| = n+1, \qquad d e_0 = 0, \qquad d e_{n+1} = e_n(x + (-q)^{-n} y), 
\end{equation}
with $F \to V$ given by $e_0 \mapsto u$ and $e_n \mapsto 0$ for $n \ge 1$, is a semi-free resolution of $V$: the kernel of $e_0 \Lambda \to V$ is $e_0(x + y)\Lambda$, and the kernel of $e_{n+1}\Lambda \to e_n \Lambda$ is $e_{n+1}(x + (-q)^{-n-1}y)\Lambda$.

Since $d(F) \subset F \cdot \Lambda^{>0}$, we have $F \otimes_\Lambda k = \bigoplus_n k e_n$ with zero differential, so $V \otimes_\Lambda k$ has infinite dimensional cohomology, and $V$ is not perfect.

Let us compute derived Hom using the semi-free resolution \eqref{eq: resolution of V}:
\begin{gather*}
V^* \simeq \Hom_\Lambda(F, \Lambda)
 = \prod_{n \ge 0} \Lambda e_n^*, \qquad |e_n^*| = -(n+1), \\
d(a\,e_n^*) = (-1)^{|a|-n}a(x+(-q)^{-n}y)e_{n+1}^*, \\
\End_\Lambda(V) \simeq \Hom_\Lambda(F,V)
 = \prod_{n \ge 0}(ku_n\oplus kv_n), \qquad |u_n|=-n,\quad |v_n|=2-n, \\
du_n=(-1)^{n+1}\big((-q)^{-n}-1\big)v_{n+1}, \qquad dv_n=0.
\end{gather*}
In the first complex, right multiplication by $x + (-q)^{-n} y$ is injective on $\Lambda^0$, has one dimensional kernel $k(x + (-q)^{1-n} y)$ on $\Lambda^2$, and is surjective onto $\Lambda^4$; and, for $n \ge 1$, $x + (-q)^{1-n} y$ is, up to sign, the image of $1 \in \Lambda^0 e_{n-1}^*$.  Hence the complex is exact except at $n = 0$, and
\[
H^*(V^*) = k[-1] \oplus k[-3], \qquad \text{generated by } w := (x - qy) e_0^*, \quad z := xy\, e_0^*. 
\]
In the second complex, $(-q)^{-n} \ne 1$ for $n \ge 1$ (as $q$ is not a root of unity), so 
\[
H^* \End_\Lambda(V) = k u_0 \oplus k v_1 \oplus k v_0 = k \oplus k[-1] \oplus k[-2]. 
\]

So Lemma \ref{lem: general noncogeneration} applies: $A$ is proper and does not cogenerate.  More precisely, by part (1) of the lemma,
\[
H^*(A) = k^2 \oplus k^3[-1] \oplus k^3[-2] \oplus k^2[-3] \oplus k[-4], \qquad H^0(A) = k \oplus k,
\]
so $A$ is coconnective with $\dim_k H^*(A) = 11$, as claimed.

Finally we compute $H^*(M)$, which shows that the module witnessing the failure of cogeneration has finite dimensional cohomology.  To compute the derived tensor product, use the two-dimensional left module of $\Lambda$-linear maps $V\to\Lambda$
\[
D=kw\oplus kz,
\]
where $w(u)=x-qy$, $w(v)=xy$, $z(u)=xy$, and $z(v)=0$.
Precomposition with $F\to V$ identifies these maps with the two cocycles above; hence
$D\to\Hom_\Lambda(F,\Lambda)$ is a quasi-isomorphism of left $\Lambda$-modules.  Its left action is
\[
xw=-qz,\qquad yw=q^{-1}z,\qquad xz=yz=0.
\]
Since $F$ is semi-free, $V\otimes_\Lambda V^*$ is computed by $F\otimes_\Lambda D$.
Writing $w_m=e_m\otimes w$ and $z_m=e_m\otimes z$, this complex has
\[
|w_m|=m+2,\quad |z_m|=m+4,\quad
dw_0=dz_m=0,\quad
dw_m=-\big((-q)^{-m}+q\big)z_{m-1}\quad(m\ge1).
\]
Each displayed coefficient is nonzero: otherwise $(-q)^{m+1}=1$, making $q$ a root of unity.
Thus the pairs $(w_m,z_{m-1})$, $m\ge1$, are contractible, leaving only $w_0$.  In particular
\begin{equation} \label{eq: V tensor V dual}
H^*(V \otimes_\Lambda V^*) = k w_0 = k[-2].
\end{equation}

By \eqref{eq: V tensor V dual} the source of $\mathrm{ev}: V \otimes_\Lambda V^* \to \End_\Lambda(V)$ is concentrated in degree $2$, and the generator $w_0 = e_0 \otimes (x-qy)e_0^*$ is sent by $\mathrm{ev}$ to the map $F \to V$, $e_0 b \mapsto u(x - qy)b$, $e_n \mapsto 0$ $(n \ge 1)$, which is $-(1+q) v_0 \ne 0$ in $H^2 \End_\Lambda(V)$.  Hence 
\[
H^*(M) = k \oplus k[-1]. 
\]
In particular $M$ is a nonzero module with finite dimensional cohomology and $\Hom_A(M, A) = 0$, which disproves Nonjecture \ref{proper dgas cogenerate}.

\begin{remark}
    In our example, our dga $A$ is coconnective. We are not aware of a proper \textit{connective} dg algebra that does not cogenerate its module category. 
\end{remark}

\section{Cogeneration and Nondegeneracy: Proof of Proposition \ref{cogeneration versus nondegeneracy}} \label{sec: proof of lemma}

We work over a field $k$, and write $(-)^\vee$ for the $k$-linear dual.  For a dg category $\cB$, we write $\Mod_\cB$ and ${}_\cB\Mod$ for the dg derived categories of right and left $\cB$-modules, i.e. of dg functors $\cB^{\op} \to \Mod_k$ and $\cB \to \Mod_k$.  For an object $x \in \cB$, we denote the Yoneda modules by
\[
h_x = \cB(-, x) \in \Mod_\cB, \qquad {}_x h = \cB(x, -) \in {}_\cB \Mod.
\]
An $(\cA, \cB)$-bimodule $M \in {}_\cA\Mod_\cB$ is a dg functor $M: \cA \otimes \cB^{\op} \to \Mod_k$, so that $M(a, -) \in \Mod_\cB$ and $M(-, b) \in {}_\cA\Mod$.  The diagonal bimodule of $\cB$ is $\cB_\Delta(x, y) = \cB(y, x)$, and $\cB^e = \cB \otimes \cB^{\op}$.

\begin{lemma}[{\cite[C.4 and C.7]{Drinfeld-dg}}] \label{bimodule yoneda}
    For a bimodule $M \in {}_\cA\Mod_\cB$ and objects $a \in \cA$, $b \in \cB$,
    \[
    h_a \otimes_\cA M \simeq M(a, -), \qquad M \otimes_\cB {}_b h \simeq M(-, b).
    \]
    In particular $h_b \otimes_\cB \cB_\Delta \simeq h_b$.
\end{lemma}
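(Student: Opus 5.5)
The statement is the bimodule Yoneda lemma, which the paper cites from Drinfeld, so I would give a short direct argument. The plan has two steps: verify both isomorphisms on representables, then bootstrap to arbitrary bimodules using that both sides are exact and commute with colimits. I will take derived tensor products, computed with semi-free (h-projective) resolutions in the appropriate variable, as in the semi-free resolution discussion of Section~\ref{sec: bounded below}.

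\emph{Step 1 (representables).} First take $M = {}_{a'}h \otimes h_{b'}$, i.e.\ $M(a,b) = \cA(a',a)\otimes \cB(b,b')$. The left module $M(-,b)$ is then the free left $\cA$-module on ${}_{a'}h$. For a free left module, the strict tensor product with $h_a$ is the coend
\[
\int^{a''} \cA(a'',a)\otimes \cA(a',a'') .
\]
By the enriched co-Yoneda lemma this coend is $\cA(a',a)$. Since this holds for every $b$, and naturally in $b$, we get $h_a \otimes_\cA M \simeq \cA(a',a)\otimes h_{b'} = M(a,-)$. Here $h_a$ is itself semi-free, so no further resolution is needed and the strict tensor product computes the derived one. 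The isomorphism is given by the canonical composition (action) map
\[
h_a \otimes_\cA M \to M(a,-),\qquad f\otimes m \mapsto f\cdot m ,
\]
which is defined for every $M$ and natural in $M$.

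\emph{Step 2 (extend to all $M$).} Both functors $M \mapsto h_a\otimes_\cA M$ and $M\mapsto M(a,-)$ from ${}_\cA\Mod_\cB$ to $\Mod_\cB$ preserve cones, shifts and arbitrary direct sums. Evaluation at $a$ is exact, and the derived tensor product with a fixed module preserves cones, shifts and direct sums. The bimodules ${}_{a'}h\otimes h_{b'}$ generate ${}_\cA\Mod_\cB$ as a localizing subcategory, because they are the free modules over $\cA\otimes\cB^{\op}$. Hence the subcategory on which the natural map of Step~1 is a quasi-isomorphism is localizing and contains the generators, so it is everything. More concretely, one can take a semi-free resolution $F\to M$ of the bimodule. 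Then $F$ is a filtered colimit of iterated cones of sums of representables. Both sides commute with such colimits, since homology commutes with filtered colimits.

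\emph{Step 3 (the other isomorphism and the special case).} The isomorphism $M\otimes_\cB {}_bh \simeq M(-,b)$ follows by the symmetric argument, or by applying Step~1 to $\cB^{\op}$ and $\cA^{\op}$. For the final claim, take $\cA=\cB$ and $M=\cB_\Delta$. Then $h_b\otimes_\cB\cB_\Delta \simeq \cB_\Delta(b,-) = \cB(-,b) = h_b$.

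The main obstacle is bookkeeping rather than mathematics. One must check that the strict tensor product with $h_a$ already computes the derived one; it does, because $h_a$ is h-projective (semi-free) as a right $\cA$-module. One must also keep the left/right and variance conventions consistent with $\cB_\Delta(x,y)=\cB(y,x)$, since that is exactly what makes the last identity come out as $h_b$ rather than ${}_bh$.
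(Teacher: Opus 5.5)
Your argument is correct. The paper gives no proof of its own, only the citation to Drinfeld. The standard argument behind that citation is shorter than yours, because the enriched co-Yoneda isomorphism
\[
h_a \otimes_\cA N = \int^{a''} \cA(a'',a)\otimes N(a'') \xrightarrow{\ \sim\ } N(a), \qquad f\otimes n \mapsto f\cdot n,
\]
holds for \emph{every} left $\cA$-module $N$, not only for free ones. Applying it to $N = M(-,b)$, naturally in $b$, shows that the action map $h_a\otimes_\cA M\to M(a,-)$ is already an isomorphism of complexes for arbitrary $M$. The only derived input is the one you identify: $h_a$ is semi-free, so the strict tensor product computes the derived one.

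So your Step~2 is valid but a detour. That step reduces to the free bimodules ${}_{a'}h\otimes h_{b'}$ and extends via the localizing subcategory they generate, or via a semi-free resolution of $M$. It is the pattern one needs when a comparison map is only known to be an equivalence on generators, which is not the situation here. It also costs you something: you get only a quasi-isomorphism rather than a strict isomorphism, and you must check that the comparison map is compatible with cones. The direct co-Yoneda route gives a strict, natural isomorphism at once.

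Your Step~3, including the variance check $\cB_\Delta(b,-)=\cB(-,b)=h_b$, is fine.
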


A dg category $\cB$ is {\em smooth} if $\cB_\Delta$ is perfect over $\cB^e$, and {\em proper} if $\cB(x, y)$ is a perfect complex over $k$ for all $x, y$.  If $\cB$ is smooth, the {\em inverse dualizing bimodule} $\cB^! = \Hom_{\cB^e}(\cB_\Delta, \cB^e)$ is again perfect, with $(\cB^!)^! \simeq \cB_\Delta$, and we write
\[
\Phi_\cB = (-) \otimes_\cB \cB^! : \Mod_\cB \to \Mod_\cB,
\]
so that $\Phi_\cB(h_b) = \cB^!(b, -)$ by Lemma \ref{bimodule yoneda}.  A module $M \in \Mod_\cB$ is {\em pseudo-perfect} if $M(b)$ is a perfect complex over $k$ for all $b$; if $\cB$ is proper, every $h_b$ is pseudo-perfect.

We will need the following lemma of Keller:

\begin{lemma}[{\cite[Lemma 3.4]{keller2009deformed}}] \label{lem:keller_duality}
    Let $\cB$ be smooth, $M \in \Mod_\cB$ pseudo-perfect, and $L \in \Mod_\cB$.  Then
    \[
    \Hom_{\Mod_\cB}(L \otimes_\cB \cB^!, M) \simeq \Hom_{\Mod_\cB}(M, L)^\vee.
    \]
\end{lemma}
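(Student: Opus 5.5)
The plan is to rewrite both sides as functors of bimodules and to use two inputs: the perfection of $\cB_\Delta$ and $\cB^!$ over $\cB^e$, and the finiteness of $M$ pointwise. Throughout, $N^\vee$ denotes the $k$-linear dual, and for right modules $L, M$ we write $\Hom_k(L,M)$ and $L \otimes_k M^\vee$ for the evident $\cB$-bimodules.

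First I rewrite the left-hand side. By tensor--hom adjunction for the bimodule $\cB^!$,
\[
\Hom_{\Mod_\cB}(L \otimes_\cB \cB^!, M) \simeq \Hom_{\cB^e}(\cB^!, \Hom_k(L, M)).
\]
Since $M$ is pseudo-perfect, each $M(b)$ is a perfect complex over $k$, so $M(b) \simeq M(b)^{\vee\vee}$. This gives an equivalence of bimodules
\[
\Hom_k(L, M) \simeq \Hom_k(L, M^{\vee\vee}) \simeq (L \otimes_k M^\vee)^\vee .
\]
I expect this to be the only place where pseudo-perfection enters. Then, by adjunction once more,
\[
\Hom_{\cB^e}\bigl(\cB^!, (L\otimes_k M^\vee)^\vee\bigr) \simeq \bigl(\cB^! \otimes_{\cB^e} (L \otimes_k M^\vee)\bigr)^\vee ,
\]
which holds for arbitrary bimodules and needs no perfection.

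Next I rewrite the right-hand side. We have $\Hom_{\Mod_\cB}(M, L) \simeq \Hom_{\cB^e}(\cB_\Delta, \Hom_k(M, L))$, which is the standard description of morphisms of modules as ``$\cB$-invariants'' of the bimodule $\Hom_k(M,L)$. Since $M$ is pointwise perfect, $\Hom_k(M, L) \simeq L \otimes_k M^\vee$. Now smoothness enters: for a perfect $\cB^e$-module $P$ with dual $P^! = \Hom_{\cB^e}(P, \cB^e)$, the natural map $P^! \otimes_{\cB^e} N \to \Hom_{\cB^e}(P, N)$ is an equivalence for every bimodule $N$. To see this, note that it holds for $P = \cB^e$ by Lemma~\ref{bimodule yoneda}, and that both sides are exact in $P$ and compatible with retracts, so it extends to all perfect $P$. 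Applying this with $P = \cB_\Delta$ and $P^! = \cB^!$ gives
\[
\Hom_{\Mod_\cB}(M,L) \simeq \cB^! \otimes_{\cB^e} (L \otimes_k M^\vee).
\]
Dualizing and comparing with the previous paragraph yields the claimed equivalence. I will also check that the composite identifications are natural in $L$ and $M$.

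The main obstacle is bookkeeping rather than substance. I need to make sure that the left and right $\cB$-actions on $\Hom_k(L,M)$, $L\otimes_k M^\vee$ and $\cB^!$ are matched consistently, so that the same bimodule tensor product $\cB^!\otimes_{\cB^e}(L\otimes_k M^\vee)$ appears on both sides, possibly up to the swap $\cB^e \cong (\cB^e)^{\op}$. I also need the dualities $M(b) \simeq M(b)^{\vee\vee}$ to be natural in $b$, so that they assemble into equivalences of bimodules and not merely of underlying complexes. A sanity check is the case $\cB = k$: there the statement reduces to $\Hom(L,M) \simeq \Hom(M,L)^\vee$ for finite-dimensional $M$, and indeed both sides equal $L^\vee\otimes M$.
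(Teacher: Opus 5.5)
Your proposal is correct and follows essentially the same route as the paper. Both arguments identify $\Hom_{\Mod_\cB}(M,L)$ with $\cB^!\otimes_{\cB^e}(L\otimes_k M^\vee)$, using $\Hom_k(M,L)\simeq L\otimes_k M^\vee$ and smoothness, and then recover $\Hom_{\Mod_\cB}(L\otimes_\cB\cB^!,M)$ by dualizing and applying tensor--hom adjunction with $M\simeq M^{\vee\vee}$. The differences are cosmetic: you rewrite the left-hand side separately and meet in the middle, and you spell out the d\'evissage behind $P^!\otimes_{\cB^e}N\simeq\Hom_{\cB^e}(P,N)$, which the paper simply cites as ``using smoothness.''
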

\begin{proof}
    Since $M$ is pseudo-perfect, $\Hom_k(M, L) \simeq M^\vee \otimes_k L$ as bimodules.  Hence, using smoothness in the second step,
    \[
    \Hom_{\Mod_\cB}(M, L) \simeq \Hom_{\cB^e}(\cB_\Delta, \Hom_k(M, L)) \simeq \cB^! \otimes_{\cB^e} (M^\vee \otimes_k L) \simeq L \otimes_\cB \cB^! \otimes_\cB M^\vee.
    \]
    Taking the linear dual and using the tensor-hom adjunction, $\Hom_k(L \otimes_\cB \cB^! \otimes_\cB M^\vee, k) \simeq \Hom_{\Mod_\cB}(L \otimes_\cB \cB^!, M)$.
\end{proof}

Let $\cC$ be a smooth dg category, $\pi: \cC \to \cS = \cC/\cK$ be a dg quotient, with $\cK \subset \cC$ generated by a set of objects. There is a pair of adjoint functors:
\[
\begin{tikzcd}[column sep=large]
\Mod_{\cC}
  \arrow[r, bend left=20, "\pi"]
&
\Mod_\cS
  \arrow[l, bend left=20, "\iota"]
\arrow[phantom, from=1-1, to=1-2, "\dashv"{rotate=-90}]
\end{tikzcd}.
\]
We recall the following lemma:
\begin{lemma}[{\cite[Proposition 3.10]{keller2009deformed}; see also \cite[Lemma 2.34]{Christ26}}] \label{lem:localization and dualizing}
    In this situation, if $\cC$ is smooth then 
    \[
     \Phi_\cS \simeq \pi \circ \Phi_\cC \circ \iota.
    \]
\end{lemma}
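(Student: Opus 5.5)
The plan is to compute the inverse dualizing bimodule $\cS^!$ explicitly in terms of $\cC^!$. Everything rests on two standard facts about dg quotients:
\begin{itemize}
    \item the quotient functor is a homological epimorphism, i.e.\ $\cS \otimes_\cC \cS \simeq \cS$ as $\cS$-bimodules;
    \item $\pi = (-)\otimes_\cC \cS$ is extension of scalars and $\iota$ is restriction, with $\pi\iota \simeq \mathrm{id}$.
\end{itemize}
The first is the content of Drinfeld's description of the quotient. The second follows from the first.

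\textbf{Step 1: $\cS_\Delta$ is a base change of $\cC_\Delta$.} Regard $\cS$ as a $(\cC,\cC)$-bimodule, or as an $(\cS,\cC)$- or $(\cC,\cS)$-bimodule as needed. The diagonal bimodule of $\cS$ is then obtained from that of $\cC$ by extension of scalars along $\pi\otimes\pi^{\op}:\cC^e\to\cS^e$:
\[
\cS_\Delta \simeq \cS\otimes_\cC\cS \simeq \cS\otimes_\cC \cC_\Delta\otimes_\cC\cS \simeq \cC_\Delta\otimes_{\cC^e}\cS^e .
\]
Extension of scalars preserves perfect modules. Hence $\cS$ is smooth, and $\cS^!$ makes sense.

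\textbf{Step 2: compute $\cS^!$.} By the extension/restriction adjunction for $\cC^e\to\cS^e$,
\[
\cS^! = \Hom_{\cS^e}(\cC_\Delta\otimes_{\cC^e}\cS^e,\cS^e) \simeq \Hom_{\cC^e}(\cC_\Delta, \cS^e),
\]
where $\cS^e$ on the right is viewed as a $\cC^e$-module via restriction on its outer structure, keeping the inner $\cS^e$-action. Since $\cC_\Delta$ is perfect over $\cC^e$, the functor $\Hom_{\cC^e}(\cC_\Delta,-)$ agrees with $\Hom_{\cC^e}(\cC_\Delta,\cC^e)\otimes_{\cC^e}(-)=\cC^!\otimes_{\cC^e}(-)$. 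Reorganizing the tensor product gives
\[
\cS^! \simeq \cS\otimes_\cC \cC^!\otimes_\cC \cS
\]
as $\cS$-bimodules. This step is where I expect the main (bookkeeping) difficulty. One must check that the identification $\Hom_{\cC^e}(P,N)\simeq \Hom_{\cC^e}(P,\cC^e)\otimes_{\cC^e}N$ for perfect $P$ is compatible with the residual $\cS^e$-actions. One must also match the left/right conventions for the diagonal and inverse dualizing bimodules (recall $\cB_\Delta(x,y)=\cB(y,x)$). I would handle this by first checking the identification on $P=\cC^e$, where both sides are literally $N$. I would then note that both sides are exact in $P$ and that the comparison map is natural, so the identification extends to all perfect $P$.

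\textbf{Step 3: conclude.} For $M\in\Mod_\cS$,
\[
\Phi_\cS(M) = M\otimes_\cS\cS^! \simeq M\otimes_\cS\cS\otimes_\cC\cC^!\otimes_\cC\cS \simeq (\iota M)\otimes_\cC \cC^!\otimes_\cC \cS = \pi\,\Phi_\cC\,\iota(M).
\]
Here $M\otimes_\cS\cS$, regarded as a right $\cC$-module, is exactly $\iota M$. All isomorphisms are natural in $M$, which gives $\Phi_\cS\simeq\pi\circ\Phi_\cC\circ\iota$.
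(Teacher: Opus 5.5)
Your proposal is correct and is essentially the argument behind the cited result: Keller's Proposition~3.10 together with his Lemma~2.9, which the paper itself invokes later as ``base change takes $\widehat\cB^!$ to $\cB^!$ and commutes with duals of perfect bimodules''. The dg quotient is a homological epimorphism, so $\cS_\Delta$ and hence $\cS^!\simeq\cS\otimes_\cC\cC^!\otimes_\cC\cS$ are base changes from $\cC$, and $M\otimes_\cS\cS\simeq\iota M$ finishes; your Step~2 is just a reproof of Keller's Lemma~2.9, done in the standard way (check on representables, then extend by exactness and naturality).
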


Now we can give a proof of Proposition~\ref{cogeneration versus nondegeneracy}. 
\begin{proof}[Proof of Proposition \ref{cogeneration versus nondegeneracy}]
    Let $M \in \Mod_\cS$ with $M \otimes_\cS \cS^! = 0$. Then by Lemma~\ref{lem:localization and dualizing} we have $\pi \circ \Phi_\cC \circ \iota (M) = 0$, which implies that $\Phi_\cC \circ \iota (M) \in \Loc(\cK) \simeq \Mod_\cK$. 

    Let $K$ be an object in $\cK$. 
    As $\cC$ is proper, $h_K$ is pseudo-perfect, so by Lemma \ref{lem:keller_duality}
    \begin{equation} \label{keller duality applied}
    \Hom_{\Mod_\cC}(\Phi_\cC \circ \iota (M), h_K) \simeq \Hom_{\Mod_\cC}(h_K, \iota (M))^\vee \simeq \Hom_{\Mod_\cS}(\pi ( h_K) , M)^\vee \simeq 0 .
    \end{equation}

    Thus $\Phi_\cC \circ \iota (M)$ is an object of $\Loc(\cK) \simeq \Mod_\cK$ with $\Hom_{\Mod_\cC}(\Phi_\cC \circ \iota (M), h_K) = 0$ for every $K \in \cK$, hence for every compact object of $\Mod_\cK$.  By hypothesis $\Mod_\cK$ is cogenerated by its compact objects, so $\Phi_\cC \circ \iota (M)= 0$. Since $\cC$ is smooth and proper, $\Phi_\cC$ is an autoequivalence of $\Mod_\cC$, with inverse the Serre functor $(-) \otimes_\cC \cC^\vee$. Hence $\iota (M) = 0$, and $M = 0$, as $\iota$ is fully faithful.
\end{proof}

The following result will later be useful:

\begin{lemma} \label{lem: general degenerate}
    Let $\cC$ be a smooth dg category and $Y \in \cC$ a nonzero object such that $\Phi_\cC(Y)$ is perfect and
    \[
    \Hom_\cC(\Phi_\cC(Y), Y) = 0.
    \]
    Let $\cB$ be the idempotent completion of the dg quotient $\cC/\Phi_\cC(Y)$, and let $X$ be the image of $Y$ in $\cB$.  Then $\cB$ is smooth, $X$ is a nonzero compact object of $\cB$ with $\Hom_\cB(X, X) \simeq \Hom_\cC(Y, Y)$, and $\Phi_\cB(X) = 0$.  In particular the inverse dualizing bimodule of the smooth dg category $\cB$ is degenerate.
\end{lemma}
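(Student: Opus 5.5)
We prove the four assertions in order. Write $Z = \Phi_\cC(Y)$ (a compact object of $\Mod_\cC$, hence after idempotent completion an object of $\Perf(\cC)$). Let $\cK = \thick(Z)$ and $\pi : \cC \to \cB$ the quotient, with right adjoint $\iota : \Mod_\cB \to \Mod_\cC$ fully faithful.

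\textbf{Smoothness of $\cB$.} A localization of a smooth category by a thick subcategory generated by a set of compact objects is smooth. The diagonal of $\cB$ is $\pi\otimes\pi$ applied to $\cC_\Delta$, and $\pi \otimes \pi$ carries perfect bimodules to perfect bimodules. Idempotent completion does not affect smoothness. This is standard and I would cite it rather than reprove it.

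\textbf{Full faithfulness on $Y$, so $X \neq 0$.} Since $\iota\pi$ is the Bousfield localization away from $\Loc(Z)$, we have $\Hom_\cB(X,X) \simeq \Hom_\cC(Y, \iota\pi Y)$. This agrees with $\Hom_\cC(Y,Y)$ provided $\Hom_\cC(Y, W) = 0$ for all $W \in \Loc(Z)$, i.e.\ provided $Y$ is right orthogonal in the appropriate sense. The hypothesis given, however, is $\Hom(Z,Y)=0$, which says $Y$ is local: $Y \in \Loc(Z)^\perp$. In that case $\iota\pi Y \simeq Y$ via the unit, and
\[
\Hom_\cB(\pi Y,\pi Y) \simeq \Hom_\cC(Y,\iota\pi Y) \simeq \Hom_\cC(Y,Y).
\]
Indeed, $\Hom(Z[i],Y)=0$ for all $i$ gives $\Hom(W,Y)=0$ for all $W \in \Loc(Z)$, because $\Loc(Z)$ is generated under colimits and $\Hom(-,Y)$ turns colimits into limits. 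Hence $Y$ lies in the image of $\iota$, and $\pi Y = X$ is nonzero since $\Hom_\cC(Y,Y) \ni \mathrm{id}_Y \neq 0$.

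\textbf{Vanishing of $\Phi_\cB(X)$.} By Lemma \ref{lem:localization and dualizing},
\[
\Phi_\cB(X) \simeq \pi\,\Phi_\cC\,\iota(X) \simeq \pi\,\Phi_\cC(Y) = \pi(Z) = 0,
\]
using $\iota X \simeq \iota\pi Y \simeq Y$ from the previous step. (After idempotent completion, $\Phi$ of the completion restricts to $\Phi_\cB$, since $\Mod$ does not change.) Degeneracy of the inverse dualizing bimodule then follows immediately, since $X \neq 0$.

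The main subtle point is the identification $\iota\pi Y \simeq Y$. It requires $Y$ to be $\Loc(Z)$-local, and this is exactly what $\Hom_\cC(\Phi_\cC(Y),Y)=0$ provides. One must use the localizing-subcategory version rather than just the thick one, which is handled by the compactness of $Z$.
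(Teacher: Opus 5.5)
Your proof is correct and follows the paper's argument: both use $\Hom_\cC(\Phi_\cC(Y),Y)=0$ to place $Y$ in $\Phi_\cC(Y)^\perp$, the essential image of $\iota$. This gives $\iota X\simeq Y$, hence $X\neq 0$ and $\Hom_\cB(X,X)\simeq\Hom_\cC(Y,Y)$, and then $\Phi_\cB(X)\simeq\pi\Phi_\cC\iota X\simeq\pi\Phi_\cC(Y)=0$ by Lemma~\ref{lem:localization and dualizing}. Your sketch of smoothness via $\cB_\Delta\simeq\cB\otimes_\cC\cC_\Delta\otimes_\cC\cB$ fills in a point the paper leaves implicit.
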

\begin{proof}
Put $K = \Phi_\cC(Y)$, and consider the adjoint pair
\[
\begin{tikzcd}[column sep=large]
\Mod_{\cC}
  \arrow[r, bend left=20, "\pi"]
&
\Mod_\cB
\arrow[l, bend left=20, "\iota"]
\arrow[phantom, from=1-1, to=1-2, "\dashv"{rotate=-90}]
\end{tikzcd}.
\]
Note that $\iota$ is fully faithful with image $K^\perp$, and $\pi$ preserves compact objects, since $K$ is compact.  By Lemma~\ref{lem:localization and dualizing}, $\Phi_\cB=\pi\Phi_\cC\iota$.
By hypothesis $Y\in K^\perp$, so $\iota X\simeq Y$.  Thus $X\neq 0$,
\[
\Hom_\cB(X,X)\simeq \Hom_\cC(Y, \iota \pi Y) \simeq \Hom_\cC(Y,Y),
\]
and $X$ is compact because $Y$ is.  Finally,
$
\Phi_\cB(X)\simeq\pi\Phi_\cC(Y)=\pi(K)=0.
$
\end{proof}

The hypothesis of Lemma \ref{lem: general degenerate} never holds when $\cC$ is smooth and proper: then $\Phi_\cC$ is the inverse of the Serre functor, and Serre duality gives $\Hom_\cC(\Phi_\cC(Y), Y)  \simeq \Hom_\cC(Y, Y)^\vee \ne 0$.

\section{On Fully Faithful Left Calabi-Yau Morphisms: Proof of Proposition \ref{universal ff lcy}}
\label{sec:ff lcy}

Let $f: \cA \to \cB$ be a dg functor between smooth dg categories.  We write ${}_\cA\cB_\cB$ and ${}_\cB\cB_\cA$ for the one-sided pullbacks of the diagonal bimodule ${}_\cB\cB_\cB$.  There is an adjoint pair
\[
\begin{tikzcd}[column sep=large]
{}_\cA \Mod_\cA 
 \arrow[r, bend left=20, "f_!"]
& {}_\cB \Mod_\cB 
\arrow[l, bend left=20, "f^*"]
\arrow[phantom, from=1-1, to=1-2, "\dashv"{rotate=-90}]
\end{tikzcd}
\]
where $f_!(M) = \cB \otimes_\cA M \otimes_\cA \cB := {}_\cB\cB_\cA \otimes_\cA M \otimes_\cA {}_\cA\cB_\cB$ and $f^*$ is restriction of bimodules.  In particular $f_!(\cA_\Delta) = \cB \otimes_\cA \cB$.  Let $c$ be the composite
\[
c: f_!(\cA_\Delta) \longrightarrow f_!f^*\cB_\Delta \longrightarrow \cB_\Delta.
\]
If $f$ is fully faithful, the first map in the definition of $c$ is an equivalence, so $c$ is the counit. 
Since $\cA$ is smooth, $f_!(\cA_\Delta)^! \simeq f_!(\cA^!)$ \cite[Lemma 2.9]{keller2009deformed}, so the dual of $c$ is a map $c^!: \cB^! \to f_!(\cA^!)$.

\begin{lemma} \label{lem: ff tensor}
    If $f: \cA \to \cB$ is fully faithful, then $h_{f(a)} \otimes_\cB \cB \otimes_\cA \cB \simeq h_{f(a)}$, and under this identification $h_{f(a)} \otimes c$ is the identity.
\end{lemma}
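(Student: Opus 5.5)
We compute $h_{f(a)} \otimes_\cB \cB \otimes_\cA \cB$ one tensor factor at a time, using Lemma~\ref{bimodule yoneda}.

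\textbf{Step 1: the equivalence.} First,
\[
h_{f(a)} \otimes_\cB {}_\cB\cB_\cA \simeq {}_\cB\cB_\cA(f(a), -) = \cB(f(-), f(a)) \in \Mod_\cA .
\]
Since $f$ is fully faithful, this right $\cA$-module is $\cA(-, a) = h_a$. Next,
\[
h_a \otimes_\cA {}_\cA\cB_\cB \simeq {}_\cA\cB_\cB(a, -) = \cB(-, f(a)) = h_{f(a)},
\]
again by Lemma~\ref{bimodule yoneda}. This gives the identification $h_{f(a)} \otimes_\cB \cB \otimes_\cA \cB \simeq h_{f(a)}$. In the middle step the map $h_a \to f^* h_{f(a)}$ is the one induced by $f$ on morphism complexes, which is an equivalence precisely because $f$ is fully faithful.

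\textbf{Step 2: $h_{f(a)} \otimes c$ is the identity.} Here $c$ is the multiplication map $\cB \otimes_\cA \cB \to \cB$, $b_1 \otimes b_2 \mapsto b_1 b_2$. After tensoring with $h_{f(a)}$ and applying the Yoneda identifications, the source is $\cB(-, f(a)) \otimes_\cA \cB$ restricted along $f$. The composite from Step 1 is
\[
h_{f(a)} \xleftarrow{\ \sim\ } h_a \otimes_\cA \cB \longrightarrow h_{f(a)} \otimes_\cB \cB \simeq h_{f(a)} .
\]
Concretely, the element $\mathrm{id}_a \otimes g$ is sent to $f(\mathrm{id}_a)\, g = g$ for $g \in \cB(-, f(a))$.

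So, after identifying the source with $h_{f(a)}$ via the unit $\mathrm{id}_a$, the map $h_{f(a)} \otimes c$ sends $g$ to $g$. Equivalently, $h_{f(a)} \otimes c$ is the counit $f_! f^* h_{f(a)} \to h_{f(a)}$ of the module-level adjunction, evaluated on a module $h_{f(a)} \simeq f_!(h_a)$ in the image of $f_!$. For fully faithful $f$ the unit $h_a \to f^* f_! h_a$ is an equivalence, and the triangle identity then forces the counit to be an equivalence that matches our identification. This is the step needing care: the identification chosen in Step 1 must coincide with the one produced by the counit.

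\textbf{Main obstacle.} The content is formal. The difficulty is bookkeeping of the derived tensor products and checking that the Yoneda equivalences of Lemma~\ref{bimodule yoneda} are compatible with the multiplication map. To handle this, work with $h_a$, which is semi-free, so that $h_a \otimes_\cA \cB$ is computed on the nose. The verification then reduces to the elementwise identity $\mathrm{id}_{f(a)} \cdot g = g$.
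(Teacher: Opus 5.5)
Your Step 1 is exactly the paper's argument: $h_{f(a)}\otimes_\cB\cB\otimes_\cA\cB\simeq\cB(f(-),f(a))\otimes_\cA\cB\simeq h_a\otimes_\cA\cB\simeq h_{f(a)}$, with Lemma~\ref{bimodule yoneda} at the two ends and full faithfulness in the middle. Your Step 2 correctly supplies what the paper leaves implicit, namely that $h_{f(a)}\otimes c$ is the identity under this identification; the cleanest form is your observation that this map is the module-level counit at $h_a\otimes_\cA\cB$, which the triangle identity forces to be the inverse of the unit-induced equivalence used in Step 1.
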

\begin{proof}
    We have
    \[
    h_{f(a)} \otimes_\cB \cB \otimes_\cA \cB \simeq \cB(f(-), f(a)) \otimes_\cA \cB \simeq h_a \otimes_\cA \cB \simeq \cB(-, f(a)).
    \]
    Here the first and third equivalences are Lemma \ref{bimodule yoneda}, and the second is full faithfulness.
\end{proof}

We recall from \cite{brav-dyckerhoff-1, brav-dyckerhoff-2} the definition of relative left Calabi-Yau structure.  For smooth $\cB$ one has $HH(\cB) = \cB_\Delta \otimes_{\cB^e} \cB_\Delta \simeq \Hom_{\cB^e}(\cB^!, \cB_\Delta)$.  Thus a class in $HH_n(f) = H_n\, \cone\left( HH(\cA) \to HH(\cB) \right)$ amounts to a map $\cA^![n-1] \to \cA_\Delta$ together with a null-homotopy of the composite
\[
\cB^![n-1] \xrightarrow{\ c^!\ } f_!(\cA^!)[n-1] \longrightarrow f_!(\cA_\Delta) \xrightarrow{\ c\ } \cB_\Delta,
\]
and hence induces a map of exact triangles
\begin{equation} \label{eq: left CY nondegeneracy}
\begin{tikzcd}
    \cB^![n-1] \ar[r, "c^!"] \ar[d]
    & f_!(\cA^!)[n-1] \ar[r] \ar[d]
    & \cofib(c^!)[n-1] \ar[r] \ar[d]
    & \cB^![n] \ar[d] \\
    \fib(c) \ar[r]
    & f_!(\cA_\Delta) \ar[r, "c"]
    & \cB_\Delta \ar[r]
    & \cofib(c)
\end{tikzcd}
\end{equation}
whose second vertical map is $f_!$ of the map $\cA^![n-1] \to \cA_\Delta$.

\begin{definition}
    Let $f: \cA \to \cB$ be a dg functor between smooth dg categories.  A {\em relative left Calabi-Yau structure} of dimension $n$ on $f$ is a class in the relative negative cyclic homology
    \[
    HC^-_n(f) = H_n\, \cone\left( CC(\cA)^{S^1} \to CC(\cB)^{S^1} \right)
    \]
    such that, for the underlying class in $HH_n(f)$, the map $\cA^![n-1] \to \cA_\Delta$ and all vertical maps in \eqref{eq: left CY nondegeneracy} are isomorphisms.
\end{definition}

\begin{lemma} \label{lem: kernel of IDB}
    If $f: \cA \to \cB$ is a fully faithful left Calabi-Yau morphism of dimension $n$, then for every $a \in \cA$,
    \[
    \cB^!(f(a), -) = 0 = \cB^!(-, f(a)).
    \]
    More precisely, let $q: \cB \to \cB/\cA$ be the quotient, with $q_! \dashv q^*$ on module categories. Then
    \[
    \Phi_\cB[n] \simeq q^*q_!, \qquad
    \ker(\Phi_\cB)=\Loc\{h_{f(a)}:a\in\cA\}\subset\Mod_\cB.
    \]
\end{lemma}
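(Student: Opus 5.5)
We read off the statement from the diagram \eqref{eq: left CY nondegeneracy}. The rightmost vertical map identifies $\cofib(c^!)[n-1] \simeq \cB_\Delta$ and $\cB^![n] \simeq \cofib(c)$. Since $f$ is fully faithful, $c$ is the counit $f_!f^*\cB_\Delta \to \cB_\Delta$. Its cofiber is the bimodule representing the localization: $\cofib(c) \simeq q^*q_!\cB_\Delta$, i.e. the bimodule $(\cB/\cA)(q-,q-)$ restricted to $\cB$. This is the standard Drinfeld quotient triangle
\[
\cB\otimes_\cA\cB \to \cB \to \cB/\cA .
\]
I would cite it, or prove it by tensoring with $h_b$ and comparing with the module-level triangle
\[
\iota_!\iota^* h_b \to h_b \to q^*q_! h_b ,
\]
where $\iota^*\iota_!=\mathrm{id}$ on $\Loc\{h_{f(a)}\}$. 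Hence $\cB^![n] \simeq \cofib(c)$ as bimodules.

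Next we apply $M\otimes_\cB(-)$ for $M\in\Mod_\cB$. This gives
\[
\Phi_\cB(M)[n] \simeq M\otimes_\cB\cofib(c) \simeq \cofib\bigl(M\otimes_\cB\cB\otimes_\cA\cB \to M\bigr).
\]
The first map is the counit of the adjunction $\Mod_\cA \rightleftarrows \Mod_\cB$ (extension $\dashv$ restriction). Its cofiber is $q^*q_!M$, since extension from $\cA$ is fully faithful with image $\Loc\{h_{f(a)}\}$ and $q^*q_!$ is the corresponding Verdier localization. This proves $\Phi_\cB[n]\simeq q^*q_!$. The key local check, already packaged in Lemma \ref{lem: ff tensor}, is that for $M=h_{f(a)}$ the map $h_{f(a)}\otimes c$ is the identity. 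So $\Phi_\cB(h_{f(a)})=\cB^!(f(a),-)=0$ by Lemma \ref{bimodule yoneda}.

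The kernel statement follows from this formula. Since $q^*$ is fully faithful, $\ker(q^*q_!)=\ker(q_!)$. For the localization $q$, the kernel of $q_!$ on $\Mod_\cB$ is exactly the localizing subcategory generated by the objects $h_{f(a)}$; this is the standard description of the kernel of a dg quotient on big module categories, from Drinfeld or Keller.

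For the vanishing $\cB^!(-,f(a))=0$ we run the symmetric argument with left modules. We tensor $\cofib(c)$ on the right with ${}_{f(a)}h$, using the left-module analogue of Lemma \ref{lem: ff tensor}, which has the same proof.

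The step I expect to be the main obstacle is making sure the identification $\cofib(c)\simeq\cB^![n]$ is as bimodules, compatibly with the maps, rather than merely objectwise. This comes from the vertical isomorphisms in \eqref{eq: left CY nondegeneracy} being bimodule maps. We also need the top-row cofiber term to match, with the shift $\cB^![n]$ sitting as $\cofib(c^!)[n-1]$ shifted by one. I would first track this carefully through the rotation of the triangle, and handle signs and shifts only up to equivalence.
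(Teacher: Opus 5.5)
Your proposal is correct and essentially matches the paper's proof: use the last vertical isomorphism $\cB^![n]\simeq\cofib(c)$ in \eqref{eq: left CY nondegeneracy} and tensor with $M$; recognize $M\otimes_\cB\cB\otimes_\cA\cB\to M$ as the counit, whose cofiber is $q^*q_!M$ because $f$ is fully faithful; then read off the kernel from the full faithfulness of $q^*$ and $\ker(q_!)=\Loc\{h_{f(a)}:a\in\cA\}$. The obstacle you flag is not a real one, since the vertical maps in that diagram are bimodule maps by construction, and your left-module argument for $\cB^!(-,f(a))=0$ just spells out a case the paper leaves implicit.
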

\begin{proof}
    For any $M \in \Mod_\cB$, we tensor the bottom row of \eqref{eq: left CY nondegeneracy} by $M$ and use the last vertical isomorphism to obtain an exact triangle
    \[
    M \otimes_\cB \cB \otimes_\cA \cB \longrightarrow M \otimes_\cB \cB_\Delta \longrightarrow M \otimes_\cB \cB^![n] \longrightarrow
    \]
    The first map is the counit $(M|_\cA)\otimes_\cA\cB\to M$. Since $f$ is fully faithful, this is the localization triangle for $q$, whose cofiber is $q^*q_!M$. Thus $\Phi_\cB[n]\simeq q^*q_!$, and the kernel assertion follows from the full faithfulness of $q^*$ and $\ker(q_!)=\Loc\{h_{f(a)}:a\in\cA\}$.
\end{proof}

To get a sort of converse, we turn to Calabi-Yau completions \cite{keller2009deformed, Yeung}.
For a dg category $\cA$ and $N \in {}_\cA \Mod_\cA$, the \emph{tensor dg category} $T_\cA(N)$ has the same objects as $\cA$ and morphisms are defined by the tensor bimodule
\[
T_\cA(N) (a, b) = 
\big( \cA_\Delta\oplus N\oplus N^{\otimes 2}\oplus\cdots \big) (b, a)
\]
with natural compositions \cite{keller2009deformed}.

For $f: \cA \to \cB$ between smooth dg categories, let $f_! \dashv f^*$, $c$ and $c^!$ be as in Section \ref{sec: proof of lemma}.   Put
\[
\Theta_{\cB/\cA}=\cofib\left(c^!:\cB^!\longrightarrow f_!(\cA^!)\right)[-1],
\qquad
\Pi_n(\cB,\cA)=T_\cB\left(\Theta_{\cB/\cA}[n-1]\right).
\]
The canonical map $f_!(\cA^![n-2])\to\Theta_{\cB/\cA}[n-1]$ induces the relative completion functor
\[
\Pi_{n-1}(\cA)=T_\cA(\cA^![n-2])\longrightarrow\Pi_n(\cB,\cA).
\]

\begin{lemma} \label{lem: key}
    Let $f:\cA\to\cB$ be a fully faithful functor between smooth dg categories. If $\cB^!(f(a),-)=0$ for every $a\in\cA$, then
    $
    \Pi_{n-1}(\cA)\longrightarrow\Pi_n(\cB,\cA)
    $
    is fully faithful, as is the induced functor on perfect categories. The same holds with $\cB^!(-,f(a))=0$ in place of $\cB^!(f(a),-)=0$.
\end{lemma}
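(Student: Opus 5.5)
We want to show that for $a, a' \in \cA$ the map $T_\cA(\cA^![n-2])(a,a') \to T_\cB(\Theta_{\cB/\cA}[n-1])(f(a),f(a'))$ is a quasi-isomorphism. Both sides are direct sums of tensor powers, and the functor respects the tensor-degree grading, so it suffices to match the $m$-th tensor powers. Write $N = \cA^![n-2]$ and $\Theta = \Theta_{\cB/\cA}[n-1]$. The comparison in tensor degree $m$ is the map
\[
h_{f(a')} \otimes_\cB f_!(N)\otimes_\cB \cdots \otimes_\cB f_!(N) \otimes_\cB {}_{f(a)}h \longrightarrow h_{f(a')}\otimes_\cB \Theta\otimes_\cB\cdots\otimes_\cB\Theta\otimes_\cB {}_{f(a)}h,
\]
together with the identification of the source with $N^{\otimes_\cA m}(a,a')$.

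The first step is to identify the source. Full faithfulness gives $f_!(N)\otimes_\cB f_!(N) \simeq \cB\otimes_\cA N\otimes_\cA \cB\otimes_\cB\cB\otimes_\cA N\otimes_\cA\cB$. The middle factor is ${}_\cA\cB\otimes_\cB\cB_\cA = f^*\cB_\Delta \simeq \cA_\Delta$. Hence $f_!(N)^{\otimes_\cB m}\simeq f_!(N^{\otimes_\cA m})$. Evaluating at $(f(a'),f(a))$ and applying Lemma~\ref{lem: ff tensor} / Lemma~\ref{bimodule yoneda} gives $N^{\otimes_\cA m}(a,a')$. This is the standard argument that $f_!$ of a fully faithful functor is monoidal on the image.

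The second step uses the hypothesis. From the defining triangle $\cB^![n-2]\to f_!(N)\to \Theta$, tensoring on the left with $h_{f(a')}$ gives a triangle whose first term is $h_{f(a')}\otimes_\cB\cB^! = \Phi_\cB(h_{f(a')}) = \cB^!(f(a'),-)=0$. So $h_{f(a')}\otimes_\cB f_!(N)\xrightarrow{\sim} h_{f(a')}\otimes_\cB\Theta$. Now I induct on $m$. The source side is
\[
h_{f(a')}\otimes_\cB f_!(N)^{\otimes m} \simeq \bigl(N^{\otimes m}(-,a')\bigr)\otimes_\cA\cB,
\]
a module of the form $L\otimes_\cA\cB$ with $L\in\Mod_\cA$. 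Such a module lies in $\Loc\{h_{f(b)}: b \in \cA\}$, and $\Phi_\cB$ kills that subcategory because $\Phi_\cB$ is colimit-preserving and vanishes on each $h_{f(b)}$. Therefore tensoring with the triangle again, $X\otimes_\cB f_!(N)\to X\otimes_\cB\Theta$ is an equivalence for every $X$ in this localizing subcategory. Moreover $X\otimes_\cB f_!(N)$ stays in the subcategory, since it has the form $(X|_\cA\otimes_\cA N)\otimes_\cA\cB$. By induction,
\[
h_{f(a')}\otimes f_!(N)^{\otimes m}\simeq h_{f(a')}\otimes\Theta^{\otimes m}
\]
for all $m$, compatibly with the maps induced by the functor. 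Finally tensoring with ${}_{f(a)}h$ evaluates at $f(a)$, which gives the full faithfulness of $\Pi_{n-1}(\cA)\to\Pi_n(\cB,\cA)$.

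I expect the main obstacle to be the bookkeeping that the inductive equivalences are really the maps induced by the functor on morphism complexes, i.e.\ that the composite identification is compatible with the map $f_!(N)\to\Theta$ in each tensor slot. I would handle it by factoring the comparison map on $\Theta$-words one slot at a time, from left to right, so that each step is one instance of the equivalence above.

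Full faithfulness on perfect categories, equivalently on idempotent completions of pretriangulated hulls, follows formally from full faithfulness of the dg functor. The variant with $\cB^!(-,f(a))=0$ is the mirror-image argument: tensor on the right with ${}_{f(a)}h$ first and induct from the right, using left modules and $\cB^!\otimes_\cB(-)$.
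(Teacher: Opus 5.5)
Your proposal is correct and follows essentially the paper's own argument. Both use colimit-preservation to upgrade the hypothesis to $\Phi_\cB(L\otimes_\cA\cB)=0$ for all $L\in\Mod_\cA$, rewrite $(L\otimes_\cA\cB)\otimes_\cB f_!(\cA^![n-2])\simeq (L\otimes_\cA\cA^![n-2])\otimes_\cA\cB$ via full faithfulness, then tensor iteratively with $\cB^![n-2]\to f_!(\cA^![n-2])\to\Theta_{\cB/\cA}[n-1]$ and evaluate at representables; your slot-by-slot factorization and the left-module mirror argument match the paper's ``iteratively tensoring'' and ``same way using left modules''.
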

\begin{proof}
Since representable right $\cA$-modules generate $\Mod_\cA$ and tensor products preserve colimits, the hypothesis gives
\[
(M\otimes_\cA\cB)\otimes_\cB\cB^!=0
\qquad(M\in\Mod_\cA).
\]
Full faithfulness identifies $f^*\cB_\Delta$ with $\cA_\Delta$, so
\begin{align*}
(M\otimes_\cA\cB)\otimes_\cB f_!(\cA^![n-2])
&\simeq M\otimes_\cA  \cB \otimes_\cA \cA^![n-2] \otimes_\cA \cB \\
&\simeq M\otimes_\cA f^*\cB_\Delta \otimes_\cA \cA^![n-2] \otimes_\cA \cB\\
&\simeq (M\otimes_\cA\cA^![n-2])\otimes_\cA\cB.
\end{align*}
Tensoring the triangle
\begin{equation}
    \label{eq:triangle}
    \cB^![n-2]\longrightarrow f_!(\cA^![n-2])
    \longrightarrow\Theta_{\cB/\cA}[n-1]
\end{equation}
with $M\otimes_\cA\cB$ therefore gives an equivalence
\[
(M\otimes_\cA\cA^![n-2])\otimes_\cA\cB
\xrightarrow{\sim}(M\otimes_\cA\cB)\otimes_\cB\Theta_{\cB/\cA}[n-1].
\]
Iteratively tensoring with~\eqref{eq:triangle} gives 
\[
\bigl(M\otimes_\cA(\cA^![n-2])^{\otimes_\cA m}\bigr)\otimes_\cA\cB
\xrightarrow{\sim}(M\otimes_\cA\cB)\otimes_\cB\bigl(\Theta_{\cB/\cA}[n-1]\bigr)^{\otimes_\cB m}
\qquad(m\ge0).
\]
Taking $M=h_a$ and evaluating at $f(a')$ gives equivalences
\[
\bigl(\cA^![n-2]\bigr)^{\otimes_\cA m}(a,a')
\xrightarrow{\sim}\bigl(\Theta_{\cB/\cA}[n-1]\bigr)^{\otimes_\cB m}(f(a),f(a'))
\qquad(m\ge0).
\]
This shows that the map $\Pi_{n-1} (\cA) \to \Pi_n (\cB, \cA) $ is fully faithful.

The other vanishing condition is treated in the same way using left modules.
\end{proof}

\section{Example \ref{eg:nontrivial ff lcy}} \label{sec: example lcy}

In this section, we ask $k$ to be a field with characteristic $0$. We prove the claims made in Example \ref{eg:nontrivial ff lcy}.   We use the conventions of Sections \ref{sec: proof of lemma} and \ref{sec:ff lcy}; in particular $\Phi_\cB = (-) \otimes_\cB \cB^!$ for a smooth dg category $\cB$.  

Recall
\[
    R = k\langle x, y, \delta \rangle / ( [x, y], [\delta, x] - 1, [\delta, y] + y ),
     \qquad M_\lambda = R/(\delta - \lambda)R \quad (\lambda \in k).
\]
In other words, $R = k[x, y]\langle \delta \rangle$ with $\delta = \partial_x - y\, \partial_y$.  It has the PBW bases $\{x^a y^b \delta^c\}$ and $\{\delta^c x^a y^b\}$.

Since $R$ is a domain, there is a resolution $0 \to R \xrightarrow{(\delta - \lambda)\cdot} R \to M_\lambda \to 0$. Hence $M_\lambda$ is perfect.  The PBW basis $\{\delta^c x^a y^b\}$ identifies $M_\lambda = k[x, y]$ as a vector space, with right action
\begin{equation} \label{eq: M action}
f \cdot x = fx, \qquad f \cdot y = f y, \qquad f \cdot \delta = \lambda f - \partial_x f + y \, \partial_y f.
\end{equation}

For a right $R$-module $M$, and an endomorphism $\nu \in \End(R)$, denote by
$M^\nu$ the right module twisted by $\nu$, whose action is defined by
\[
 m * r = m \cdot \nu(r).
\]

\subsection{A smooth category with degenerate inverse dualizing bimodule}

We consider the following automorphism of $R$:
\[
\nu(x) = x, \qquad \nu (y) = y, \qquad \nu(\delta) = \delta - 1. 
\]

\begin{lemma}[See also~\cite{LiuMa2020}]
    \label{prop: nakayama}
    The algebra $R$ is smooth, and $R^! \simeq R^\nu[-3]$, where $R^\nu$ is $R$ with its right action twisted by $\nu$.  Consequently $\Phi_R(M_\lambda) \simeq M_{\lambda - 1}[-3]$.
\end{lemma}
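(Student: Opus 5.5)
The plan is to realize $R$ as an iterated Ore extension and compute $R^! = \Hom_{R^e}(R, R^e)$ from an explicit Koszul-type bimodule resolution. First, $R = S[\delta; \partial]$ with $S = k[x,y]$ and $\partial = \partial_x - y\partial_y$ a derivation of $S$. The polynomial ring $S$ has the Koszul bimodule resolution of length $2$, and for an Ore extension $S[\delta;\partial]$ there is the standard short exact sequence of $R$-bimodules
\[
0 \to R\otimes_S R \xrightarrow{\ \alpha\ } R\otimes_S R \to R \to 0, \qquad \alpha(1\otimes 1) = \delta\otimes 1 - 1\otimes\delta .
\]
Tensoring the Koszul resolution of $S$ (induced up via $R\otimes_S - \otimes_S R$) and taking the total complex of this bicomplex gives a finite resolution of $R$ by finitely generated free $R^e$-modules of length $3$, with ranks $1,3,3,1$. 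This proves smoothness. Alternatively, I can regard $R$ as a PBW deformation of a polynomial ring in three variables, filtered by total degree, and get the same Koszul resolution with deformed differentials.

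Next I compute $R^!$ by dualizing this resolution. Since it is self-dual up to twist, $\Hom_{R^e}(-,R^e)$ has cohomology only in top degree $3$, namely $\Ext^3_{R^e}(R,R^e)$. This is the cokernel of the last dual differential $R^e{}^{3}\to R^e$, so it is the invertible bimodule $R^\nu$ for an automorphism $\nu$. I will identify $\nu$ by the general rule for Ore extensions and Lie-type algebras: the Nakayama automorphism is given by the modular character. For $S=k[x,y]$ it is trivial, since $S$ is Calabi--Yau. Adjoining $\delta$ twists by $\delta \mapsto \delta + \mathrm{div}(\partial)$ up to a sign convention. Here $\mathrm{div}(\partial) = \partial_x(1) + \partial_y(-y) = -1$, which gives $\nu(\delta) = \delta - 1$ and $\nu$ the identity on $x,y$. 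Concretely, I will compute the image of the dual differential into $R^e$ and read off that it is generated by $x\otimes 1 - 1\otimes x$, $y\otimes 1 - 1\otimes y$, and $\delta\otimes 1 - 1\otimes(\delta - 1)$, up to sign. The hard part, and the main obstacle, is getting this constant and its sign right. The term comes from commuting $\delta$ past $y$ in the Koszul signs; a sign error would flip it to $\delta+1$. I would check it against the cited \cite{LiuMa2020} and against the one-variable sanity check $k\langle y,\delta\rangle/([\delta,y]+y)$, the enveloping algebra of the $2$-dimensional nonabelian Lie algebra, whose Nakayama automorphism is known.

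Finally, the consequence follows from a short chain of identifications:
\[
\Phi_R(M_\lambda) = M_\lambda \otimes_R R^\nu[-3] \simeq M_\lambda^{\nu}[-3].
\]
The right module $M_\lambda^\nu$ equals $R/(\delta-\lambda)R$ with the action twisted by $\nu$. The map $r\mapsto \nu^{-1}(r)$ identifies it with $R/\nu^{-1}(\delta-\lambda)R = R/(\delta+1-\lambda)R$ or $R/(\delta - 1 - \lambda)R$, depending on the direction of the twist. Checking the direction carefully, so that the conventions produce $M_{\lambda-1}$, is again just the same sign bookkeeping. Since $M_\lambda$ is perfect, no derived subtlety arises: $M_\lambda\otimes_R R^\nu$ is computed by the resolution $R\xrightarrow{(\delta-\lambda)} R$, which is twisted to $R\xrightarrow{\nu^{\mp1}(\delta-\lambda)}R$.
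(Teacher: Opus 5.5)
Your route is viable and differs from the paper's. The paper observes that $D_a=a\otimes 1-1\otimes a^{\op}$ for $a=x,y,\delta$ span a copy of a $3$-dimensional Lie algebra $\fg$ in $R^e$ (with $[D_\delta,D_y]=-D_y$ and the other brackets zero). It uses that $R^e$ is free over $U(\fg)$ with $R^e\otimes_{U(\fg)}k\cong R$, and base-changes the Chevalley--Eilenberg resolution, so the constant in the top differential is visibly the bracket term. Your Ore-extension bicomplex gives a resolution of the same shape.

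\textbf{The gap.} As written, the proposal does not prove the lemma. Its main content is the specific twist $\nu(\delta)=\delta-1$, and you only reach it through a heuristic (``modular character'', ``$\delta\mapsto\delta+\mathrm{div}(\partial)$ up to a sign convention''). You explicitly leave the sign unresolved, and in the last step you leave open whether you get $R/(\delta+1-\lambda)R$ or $R/(\delta-1-\lambda)R$. This is not cosmetic. Proposition~\ref{prop: X} needs $\Phi_R(M_1)\simeq M_0[-3]$ together with $\Hom_R(M_0,M_1)=0$. The opposite sign would give $M_2[-3]$, and $\Hom_R(M_2,M_1)=k$ by Lemma~\ref{prop: homs}. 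Two further claims are asserted rather than shown: that the dual complex is concentrated in degree $3$ (``self-dual up to twist''), and that the cokernel is $R^\nu$.

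\textbf{How to close it.} The missing computation is short, and it is not a matter of Koszul signs. Overall signs on the components of the top differential do not change the right ideal they generate; what matters is the constant in the chain lift of $\alpha$.
\begin{enumerate}
\item Identify $R\otimes_S K_\bullet(S)\otimes_S R$ with $R^e\otimes\bigwedge^\bullet\langle e_x,e_y\rangle$, with $d e_s=D_s$ and $d(e_x\wedge e_y)=D_xe_y-D_ye_x$. Then $\alpha_0$ is right multiplication by $D_\delta$.
\item Use $[D_x,D_\delta]=D_{[x,\delta]}=0$ and $[D_y,D_\delta]=D_{[y,\delta]}=D_y$. The lift is forced to be $\alpha_1(e_x)=D_\delta e_x$, $\alpha_1(e_y)=(D_\delta+1)e_y$, and $\alpha_2(e_x\wedge e_y)=(D_\delta+1)\,e_x\wedge e_y$.
\item So the image of the top dual differential is $D_xR^e+D_yR^e+(D_\delta+1)R^e$, exactly as in the paper.
\item The map $a\otimes b^{\op}\mapsto b\,\nu(a)$ with $\nu(\delta)=\delta-1$ kills these three generators.
\item A PBW/associated-graded argument settles the rest: the leading terms $x-x'$, $y-y'$, $\delta-\delta'$ form a regular sequence in $k[x,y,\delta,x',y',\delta']$. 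This shows both that the dual complex has cohomology only in degree $3$ and that the induced map $\Ext^3_{R^e}(R,R^e)\to R^\nu$ is bijective.
\item On $M_\lambda=k[x,y]$ the twisted action is $f*\delta=f\cdot(\delta-1)=(\lambda-1)f-\partial_xf+y\partial_yf$. This is $M_{\lambda-1}$ by \eqref{eq: M action}. In your formulation, $\nu^{-1}(\delta-\lambda)=\delta-(\lambda-1)$, so no ambiguity remains.
\end{enumerate}
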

\begin{proof}

For $a\in R$, put $D_a=a\otimes1-1\otimes a^{\op}\in R^e$.
On a bimodule this acts by $b\mapsto ab-ba$, so these are the differences
between the left and right actions.  They satisfy
$[D_a,D_b]=D_{[a,b]}$.  In particular, 
\[
[ D_\delta, D_x] = 0, \qquad [D_\delta, D_y] = -D_y, \qquad [D_x, D_y] = 0.
\]

Let $\mathfrak g$ be the Lie algebra with basis $e_x,e_y,e_\delta$ and brackets
$[e_\delta,e_x]=0$, $[e_\delta,e_y]=-e_y$, $[e_x,e_y]=0$. Then $e_i\mapsto D_i$
defines a Lie algebra map $\mathfrak g\to R^e$.
Then $R^e$ is free as a right $U(\mathfrak g)$-module, and $R^e\otimes_{U(\mathfrak g)}k \cong R$.
Applying $R^e\otimes_{U(\fg)}-$ to the Chevalley--Eilenberg resolution
$U(\fg)\otimes\bigwedge^\bullet\fg\to k$ gives a free resolution of $R$ as an $R^e$-module:
\begin{equation} \label{eq: bimodule resolution}
0\rightarrow R^e \otimes \bigwedge^3 \fg \xrightarrow{d_3} R^e \otimes \bigwedge^2 \fg \xrightarrow{d_2} R^e \otimes \fg \xrightarrow{d_1} R^e \xrightarrow{\mu} R \rightarrow 0.
\end{equation}
Thus $R$ is smooth. Here $\mu(a\otimes b^{\op})=ab$, and explicitly,
\begin{align*}
    d_1(e_i)&=D_i \qquad (i\in\{x,y,\delta\});\\
    d_2(e_i\wedge e_j)&=D_i e_j-D_j e_i-[e_i,e_j]
    \qquad (i,j\in\{x,y,\delta\});\\
    d_3(e_x\wedge e_y\wedge e_\delta)&=D_x(e_y\wedge e_\delta)-D_y(e_x\wedge e_\delta)
    +(D_\delta+1)(e_x\wedge e_y).
\end{align*}

Dualizing \eqref{eq: bimodule resolution} computes $R^!$. An explicit computation shows that the cohomology is concentrated in degree $3$, and
\[
\Ext^3_{R^e}(R,R^e)
=R^e\big/(D_xR^e+D_yR^e+(D_\delta+1)R^e).
\]
The assignment
\[
a\otimes b^{\op}\longmapsto b\,\nu(a)
\]
therefore induces an isomorphism from this quotient to $R^\nu$, as $R^e$-modules.  Hence $R^!\simeq R^\nu[-3]$.

Finally, $M_\lambda\otimes_R R^\nu\simeq M_\lambda^\nu
\simeq M_{\lambda-1}$ by \eqref{eq: M action}, which proves the last assertion.
\end{proof}

\begin{lemma} \label{prop: homs}
    $\Hom_R(M_\lambda, M_\mu) = k$, in degree $0$, if $\lambda - \mu \in \bZ_{\ge 0}$, and $\Hom_R(M_\lambda, M_\mu) = 0$ otherwise.
\end{lemma}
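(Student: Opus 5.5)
Since $R$ is a domain, $M_\lambda$ has the length-one free resolution $0 \to R \xrightarrow{(\delta-\lambda)\cdot} R \to M_\lambda \to 0$. Applying $\Hom_R(-, M_\mu)$ therefore computes $\Hom_R(M_\lambda, M_\mu)$ as the two-term complex
\[
M_\mu \xrightarrow{\ m \mapsto m\cdot(\delta - \lambda)\ } M_\mu,
\]
placed in degrees $0$ and $1$. Here we identify $\Hom_R(R, M_\mu) = M_\mu$ by evaluation at $1$, and precomposition with left multiplication by $\delta-\lambda$ becomes right multiplication by $\delta - \lambda$. So the whole computation reduces to the kernel and cokernel of one explicit linear operator on $k[x,y]$.

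By \eqref{eq: M action}, the operator is
\[
T = (\mu - \lambda) - \partial_x + y\,\partial_y \quad \text{on } k[x,y].
\]
The plan is to decompose $k[x,y] = \bigoplus_b k[x]\, y^b$. The operators $\partial_x$ and $y\partial_y$ both preserve each summand, and $y\partial_y$ acts on the $b$-th summand by the scalar $b$. Hence on $k[x]y^b$ we have
\[
T = (\mu - \lambda + b) - \partial_x .
\]

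For each $b$ we analyze the restricted operator.
\begin{itemize}
\item If $c := \mu - \lambda + b \neq 0$, then $c - \partial_x$ is bijective on $k[x]$. Indeed, $\partial_x$ is locally nilpotent, so the inverse is $\sum_{j\ge0} c^{-j-1}\partial_x^j$.
\item If $c = 0$, the operator is $-\partial_x$. Its kernel is the constants $k$, and since we are in characteristic $0$ it is surjective on $k[x]$, so its cokernel is $0$.
\end{itemize}

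Summing over $b$, the cokernel always vanishes. The kernel is one-dimensional, spanned by $y^b$, exactly when some $b \ge 0$ satisfies $b = \lambda - \mu$, i.e. when $\lambda - \mu \in \bZ_{\ge 0}$; otherwise the kernel is $0$. This gives $\Hom_R(M_\lambda, M_\mu) = k$ in degree $0$ when $\lambda - \mu \in \bZ_{\ge 0}$ and $0$ otherwise, as claimed.

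The only step needing care is the sign and side conventions: the resolution map is left multiplication by $\delta - \lambda$, and this must correctly become right multiplication by $\delta-\lambda$ on $M_\mu$ under the evaluation identification. Once that is checked, the rest is the elementary computation above, using characteristic $0$ for surjectivity of $\partial_x$.
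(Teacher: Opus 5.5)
Your proposal is correct and follows essentially the same route as the paper: resolve $M_\lambda$ by left multiplication by $\delta-\lambda$, reduce to right multiplication by $\delta-\lambda$ on $M_\mu \cong k[x,y]$, i.e.\ the operator $(\mu-\lambda)-\partial_x+y\,\partial_y$, and analyze it on each summand $k[x]y^b$, where it is either invertible via the locally nilpotent geometric series or equal to $-\partial_x$. One small point: characteristic zero is needed not only for surjectivity of $\partial_x$ on $k[x]$ but also for its kernel to be exactly $k$.
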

\begin{proof}
Applying $\Hom_R(-,M_\mu)$ to the resolution of $M_\lambda$ gives
\[
\left[M_\mu\xrightarrow{\cdot(\delta-\lambda)}M_\mu\right]
\]
in degrees $0,1$. By \eqref{eq: M action}, the differential is
\[
f\longmapsto(\mu-\lambda)f-\partial_xf+y\partial_yf.
\]
It preserves each summand $k[x]y^n$ and acts there as $(\mu+n-\lambda)-\partial_x$.
If $\mu+n-\lambda\ne0$, this is invertible, with inverse
\[
p\longmapsto\sum_{j\ge0}(\mu+n-\lambda)^{-j-1}\partial_x^jp,
\]
the sum being finite for each polynomial $p$. If $\mu+n-\lambda=0$, it is
$-\partial_x$, which is surjective with kernel $k$ in characteristic zero.
Thus the complex has cohomology $k$ in degree $0$ when
$\lambda-\mu\in\bZ_{\ge0}$, and is acyclic otherwise.
\end{proof}

\begin{proposition} \label{prop: X}
    Let $\cB = \Idem(\Perf(R)/M_0)$, and let $X$ be the image of $M_1$.  Then $\cB$ is smooth, $X$ is a nonzero compact object of $\cB$ with $\Hom_\cB(X, X) = k$, and $\Phi_\cB(X) = 0$.
\end{proposition}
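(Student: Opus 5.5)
We will deduce this from Lemma~\ref{lem: general degenerate}, applied with $\cC = \Perf(R)$ (or rather $\Mod_R$ with compact objects $\Perf(R)$) and $Y = M_1$. The first thing to record is that $R$ is smooth, by Lemma~\ref{prop: nakayama}. Also $M_1$ is perfect, via the two-term resolution $0 \to R \xrightarrow{(\delta-1)\cdot} R \to M_1 \to 0$. So $Y = M_1$ is a compact object of $\Mod_R$.

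Next we identify the object by which we quotient. By Lemma~\ref{prop: nakayama},
\[
\Phi_R(M_1) \simeq M_0[-3],
\]
which is perfect. Hence the dg quotient $\cC/\Phi_\cC(Y)$ is the same as $\Perf(R)/M_0$, because quotienting by a shift of an object is the same as quotienting by the object. Its idempotent completion is therefore exactly $\cB$.

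It remains to check the two other hypotheses of Lemma~\ref{lem: general degenerate}, using Lemma~\ref{prop: homs}.

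\begin{itemize}
\item The vanishing condition: we have
\[
\Hom_R(\Phi_R(M_1), M_1) \simeq \Hom_R(M_0, M_1)[3].
\]
This vanishes because $\lambda - \mu = 0 - 1 = -1 \notin \bZ_{\ge 0}$.
\item Nonvanishing of $Y$: $\Hom_R(M_1, M_1) = k$ in degree $0$, since $\lambda - \mu = 0$. This is nonzero, so $Y \neq 0$.
\end{itemize}

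Lemma~\ref{lem: general degenerate} then gives all the conclusions at once: $\cB$ is smooth, $X$ is a nonzero compact object, $\Hom_\cB(X,X) \simeq \Hom_R(M_1,M_1) = k$, and $\Phi_\cB(X) = 0$.

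The only point needing care is matching conventions. Lemma~\ref{lem: general degenerate} is phrased for a smooth dg category $\cC$ and its module category. We take $\cC$ to be $R$ viewed as a one-object dg category, or equivalently $\Perf(R)$; these are Morita equivalent, so $\Mod_\cC$, smoothness and $\Phi$ agree. With this choice, the quotient of $\Perf(R)$ by $M_0$ and the localization $\Mod_R \to \Mod_\cB$ by $\Loc(M_0)$ correspond. This correspondence is the standard compatibility of Drinfeld quotients with module categories, together with the fact that idempotent completion does not change $\Mod$.

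I expect no substantial obstacle. The possible subtlety is the direction of twisting in $\Phi_R(M_\lambda) \simeq M_{\lambda-1}[-3]$ from Lemma~\ref{prop: nakayama}. If it were $M_{\lambda+1}$ instead, we would quotient by $M_2$, and the required vanishing would become $\Hom(M_2, M_1) = 0$, which fails. So the argument hinges on the sign of $\nu(\delta) = \delta - 1$, which we take as given from Lemma~\ref{prop: nakayama}.
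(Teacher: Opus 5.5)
Your proposal is correct and is exactly the paper's argument: apply Lemma~\ref{lem: general degenerate} to $\cC = \Perf(R)$ and $Y = M_1$. Lemma~\ref{prop: nakayama} gives $\Phi_R(M_1) \simeq M_0[-3]$, which is perfect, and Lemma~\ref{prop: homs} gives $\Hom_R(M_0, M_1) = 0$ and $\End_R(M_1) = k$. Your remarks on Morita invariance and on the direction of the twist are accurate; the latter is already settled by Lemma~\ref{prop: nakayama}.
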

\begin{proof}
By Lemma \ref{prop: nakayama}, $\Phi_R(M_1)\simeq M_0[-3]$, which is perfect.  By Lemma \ref{prop: homs}, $\Hom_R(M_0,M_1)=0$ and $\Hom_R(M_1,M_1)=k$.  So Lemma \ref{lem: general degenerate} applies to $\cC=\Perf(R)$ and $Y=M_1$.
\end{proof}

By Lemma \ref{lem: key}, the relative $2$-Calabi-Yau completion of $k \xrightarrow{X} \cB$,
    \[
    F: \cX := \Perf(k[z]) \longrightarrow \cS := \Perf(\Pi_2(\cB, k)), \qquad |z| = 0,
    \]
    is fully faithful. 

\begin{remark}[The idea behind the construction] \label{rem: idea}
Lemmas \ref{lem: general degenerate} and \ref{lem: key} reduce the construction of a fully faithful left Calabi--Yau functor with nonzero source to finding a smooth dg category $\cC$ and an object $Y$ with $\End_\cC(Y)=k$, $\Phi_\cC(Y)$ perfect, and $\Hom_\cC(\Phi_\cC(Y),Y)=0$; as noted above, $\cC$ cannot be proper.  
In order to check the hypotheses of Lemma \ref{lem: general degenerate} (to produce a category with degenerate dualizing bimodule), it is convenient to have $\cC=\Perf(A)$ for an algebra $A$ with $A^!\simeq A^\nu[-d]$ for some automorphism $\nu$: then $\Phi_A(M)=M^\nu[-d]$ is perfect, and the condition reads $\Hom_A(M^\nu,M)=0$.


For $M=A/aA$ with $a$ a nonzerodivisor, $\Hom_A(M,M)=[M\xrightarrow{\cdot a}M]$, so $\End_A(M)=k$ asks that right multiplication by $a$ be surjective on $M$ with one dimensional kernel.  

The basic such operator is $\partial_x$ on $k[x]$, which leads to the modules $A/(\delta-\lambda)A$ over the Weyl algebra $A=k\langle x,\delta\rangle/([\delta,x]-1)$.  But there $\nu=\mathrm{id}$, so $\Hom_A(M^\nu,M)\ne0$.  Adjoining a variable $y$ with $[x,y]=0$ and $[\delta,y]=-y$ produces the twist $\nu(\delta)=\delta-1$ (Lemma \ref{prop: nakayama}); the shift is the divergence of the vector field $\delta=\partial_x-y\,\partial_y$, whereas $\partial_x$ is divergence free.  Then $M_\lambda^\nu\simeq M_{\lambda-1}$, and $\Hom_R(M_{\lambda-1},M_\lambda)=0$ by Lemma \ref{prop: homs}.
\end{remark}

\begin{remark} Example~\ref{eg:nontrivial ff lcy} also provides an example of a left Calabi--Yau functor that is not spherical, since for a spherical functor $\cA \to \cB$ the inverse Serre functor has to be an equivalence. Such an example coming from geometry can be found in~\cite{Kuo-Li-CY}. 
\end{remark}

The rest of this section aims to find a categorical compactification of $\Pi_2(\cB, k)$, as prescribed in Example~\ref{eg:nontrivial ff lcy}. 

\subsection{A categorical compactification of $
    R = k\langle x, y, \delta \rangle / ( [x, y], [\delta, x] - 1, [\delta, y] + y )
    $}
Note that $R$ is not proper, but it is smooth and of finite global dimension since it is an Ore extension of $k[x, y]$. 

First let us recall some constructions from noncommutative projective geometry (see e.g.\ \cite{ArtinZhang}).  

Let $A=\bigoplus_{d\ge0}A_d$ be a connected graded algebra, that is, $A_0=k$, $A$ is generated by $A_1$, and each $A_d$ is finite dimensional; we assume $A$ right Noetherian.
Let $\gr A$ be the abelian category of finitely generated graded right $A$-modules and degree preserving maps, with shift $M(n)_d=M_{n+d}$, and let $\mathrm{tors}\subset\gr A$ be the Serre subcategory of finite dimensional modules.  Let $\qgr A=\gr A/\mathrm{tors}$ be the quotient category, let $\cO(n)$ be the image of $A(n)$ in $\qgr A$, and write $H^j(\cF)=\operatorname{Ext}^j_{\qgr A}(\cO,\cF)$ for $\cF\in\qgr A$.
For commutative $A$, Serre's theorem identifies $\mathrm{qgr}\,A$ with $\coh(\operatorname{Proj}A)$; in general, $\mathrm{qgr}\,A$ is the category of coherent sheaves on the noncommutative projective scheme $\operatorname{Proj}A$ of \cite{ArtinZhang}.

A connected graded algebra $A$ is Artin--Schelter Gorenstein of dimension $d$ and Gorenstein parameter $\ell$ \cite{ArtinSchelter} if $A$ has injective dimension $d$ as a left and as a right module over itself, and
\[
\operatorname{Ext}^i_A(k,A)\simeq\begin{cases}k(\ell),&i=d,\\0,&i\ne d,\end{cases}
\]
on both sides; it is Artin--Schelter regular if moreover $A$ has global dimension $d$.  In that case the minimal graded free resolution of $k$ has length $d$ and ends with $A(-\ell)$; for $A=k[x_0,\dots,x_{d-1}]$ it is the Koszul resolution, and $\ell=d$.  In general $\ell=d$ if and only if this resolution is linear, i.e. $A$ is Koszul, and a quadratic algebra with a PBW basis is Koszul \cite{Priddy}.  Noetherian Artin--Schelter regular algebras satisfy the condition $\chi$ of \cite{ArtinZhang}, and the cohomology of $\cO(n)$ is computed by local cohomology:
\[
H^0(\cO(n))=A_n,\qquad H^j(\cO(n))=0\ \ (0<j<d-1),\qquad H^{d-1}(\cO(n))=(A_{-n-\ell})^\vee .
\]
Thus $\mathrm{qgr}\,A$ behaves like $\coh(\mathbf P^{d-1})$, with $\cO(-\ell)$ in the role of the canonical bundle. We recall the following theorem:

\begin{theorem}[{\cite[Theorem 4.14]{MinamotoMori}}] \label{thm: MM}
Let $A$ be a right Noetherian Artin--Schelter regular algebra of dimension $d\ge1$ and Gorenstein parameter $\ell\ge1$.  Then $\bigoplus_{i=0}^{\ell-1}\cO(i)$ is a tilting object of $D^b(\mathrm{qgr}\,A)$, and its endomorphism algebra $\nabla A$ is finite dimensional of finite global dimension.  Hence $D^b(\mathrm{qgr}\,A)\simeq\Perf(\nabla A)$.
\end{theorem}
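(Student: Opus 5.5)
The approach follows Beilinson's classical argument for $\coh(\mathbf P^{d-1})$, transported to $\qgr A$ using the local-cohomology computation recalled just above. Put $T=\bigoplus_{i=0}^{\ell-1}\cO(i)$. There are three things to establish: $T$ has no self-extensions, $T$ generates $D^b(\qgr A)$, and $\nabla A=\End(T)$ is finite dimensional of finite global dimension. The equivalence $D^b(\qgr A)\simeq\Perf(\nabla A)$ then follows by the standard tilting theorem, since $D^b(\qgr A)$ is idempotent complete and algebraic.

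\textbf{Step 1: vanishing of self-extensions.} For $0\le i,j\le\ell-1$ we have
\[
\Ext^p_{\qgr A}(\cO(i),\cO(j))=H^p(\cO(j-i)),
\]
and $|j-i|\le \ell-1$. By the displayed formulas, $H^p$ vanishes for $0<p<d-1$. The top group $H^{d-1}(\cO(j-i))=(A_{i-j-\ell})^\vee$ also vanishes, because $i-j-\ell\le -1<0$. If $d=1$ the cases $p=0$ and $p=d-1$ coincide, and one checks directly that there are no higher Ext groups. Hence $\Ext^{>0}(T,T)=0$. The degree-zero part is $\End(T)=\bigoplus_{i\le j}A_{j-i}$, which is finite dimensional and upper triangular with $k$ on the diagonal, hence a directed algebra. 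A directed (triangular) algebra with semisimple diagonal has finite global dimension, which settles the last claim about $\nabla A$.

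\textbf{Step 2: generation.} This is the main step. I would use the minimal free resolution of $k$,
\[
0\to A(-\ell)\to\bigoplus A(-a_{d-1,s})\to\cdots\to A\to k\to 0.
\]
All twists lie in $[-\ell,0]$: the generator degrees $a_{p,s}$ satisfy $0\le a_{p,s}\le \ell$, with equality $\ell$ only at the last term. This uses the Gorenstein symmetry (the dual resolution is again a resolution of $k(\ell)$) together with minimality. Shifting by $(n)$ and passing to $\qgr A$, where $k$ becomes zero, gives an exact complex of the sheaves $\cO(n-a_{p,s})$. It expresses $\cO(n-\ell)$ via objects $\cO(m)$ with $n-\ell<m\le n$, and $\cO(n)$ via objects $\cO(m)$ with $n-\ell\le m<n$.

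By induction in both directions, every $\cO(n)$ lies in $\thick(T)$. Next, every finitely generated graded module $M$ has a resolution by sums of shifts $A(-m)$. Since $A$ has finite global dimension, this resolution can be taken finite, so $\pi M\in\thick\{\cO(n)\}$. Thus $T$ classically generates $D^b(\qgr A)$.

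\textbf{Obstacle.} The delicate point is the bound $0\le a_{p,s}\le\ell$ on the twists in the resolution of $k$. This is where the Gorenstein condition is genuinely used. It may be necessary to invoke the Artin--Schelter self-duality of the resolution (after Noetherian/$\chi$ hypotheses as in \cite{ArtinZhang}) rather than proving it from scratch. I would cite it from \cite{ArtinSchelter} or \cite{MinamotoMori}.
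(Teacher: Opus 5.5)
The paper gives no proof of this statement. It quotes it from Minamoto--Mori, so there is no internal argument to compare yours against.

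Your Beilinson-style proof is correct. It has three parts: vanishing of self-extensions from the local cohomology formulas, classical generation from the exact complex that the minimal resolution of $k$ induces in $\qgr A$, and finite global dimension from triangularity of $\nabla A$.

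\textbf{The step you flag as the obstacle.} It needs no citation beyond the Gorenstein condition itself.
\begin{itemize}
\item Minimality means $\partial P_p\subset P_{p-1}A_{>0}$, and no basis element of $P_p$ is a cycle. Together these force $\min_s a_{p,s}>\min_s a_{p-1,s}$.
\item Since $\Ext^i_A(k,A)=0$ for $i\ne d$ and $\Ext^d_A(k,A)=k(\ell)$, the dual complex $\Hom_A(P_\bullet,A)$ is a minimal free resolution of $k(\ell)$ on the other side. The same argument applied to it gives $\max_s a_{p,s}>\max_s a_{p-1,s}$.
\item With $P_0=A$ and $P_d=A(-\ell)$, this gives $1\le a_{p,s}\le \ell-1$ for $0<p<d$.
\end{itemize}
You should state the lower bound $a_{p,s}\ge 1$ explicitly. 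It is what makes the downward induction, expressing $\cO(n)$ through lower twists, work.

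\textbf{Step 1.} Say explicitly that $H^p(\cO(n))=0$ for $p\ge d$, since the cohomological dimension is $d-1$.

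For $d=1$, note that the recalled formula $H^0(\cO(n))=A_n$ is not literally correct. For example, for $A=k[x]$ one has $H^0(\cO(n))=k$ for every $n$. In general there is an extra piece $(A_{-n-\ell})^\vee$ coming from $H^1_{\mathfrak m}(A)$. For $n=j-i$ in your range this piece vanishes, so your formula $\End(T)=\bigoplus_{i\le j}A_{j-i}$ is still valid.

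\textbf{Comparison.} Citing lets the paper be brief and leaves the foundational inputs (condition $\chi$, local duality) to the source. Your route shows exactly which inputs are used. It also produces directly the upper-triangular description of $\nabla A$ that the paper states right after the theorem, together with the explicit bound $\operatorname{gldim}\nabla A\le \ell-1$.
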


Ordering the summands as $\cO(\ell-1),\ldots,\cO$, it is the upper triangular matrix algebra with $(i,j)$ entry $A_{j-i}$, $0\le i\le j\le\ell-1$, and multiplication induced by that of $A$.


Now we are ready to construct a categorical compactification of $\Perf (R)$. 

\begin{proposition} \label{prop:cat cpt R}
Let $\widetilde R=k\langle t,x,y,\delta\rangle/
\bigl([t,x],[t,y],[t,\delta],[x,y],[\delta,x]-t^2,[\delta,y]+yt\bigr)$
be the homogenization of $R$. 
Let $\widehat\cR=D^b(\qgr\widetilde R)$ with its natural dg enhancement.  Then $\widehat\cR$ is smooth and proper, and there is a localization $q_R: \widehat\cR \to \Perf(R)$.  Moreover, $\ker (q_R)$ is generated by finitely many objects. 
\end{proposition}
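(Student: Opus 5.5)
The plan is to show that $\widetilde R$ is a Noetherian Artin--Schelter regular algebra of dimension $4$ with Gorenstein parameter $\ell = 4$, apply Theorem \ref{thm: MM} to get smoothness and properness, and then identify $\Perf(R)$ with the quotient of $D^b(\qgr \widetilde R)$ by the sheaves supported on the hyperplane $t=0$.

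\textbf{Step 1: $\widetilde R$ is AS regular and Koszul.} I would realize $\widetilde R$ as an iterated Ore extension of $k[t,x,y]$ by $\delta$. Put $\sigma = \mathrm{id}$ and take the derivation $\partial = t^2\partial_x - yt\,\partial_y$. This is a degree-one graded derivation of $k[t,x,y]$ with all generators in degree $1$, and it gives exactly the relations $[\delta,x]=t^2$, $[\delta,y]=-yt$, $[\delta,t]=0$. From this:
\begin{itemize}
\item $\widetilde R$ is a Noetherian domain with PBW basis $\{t^a x^b y^c \delta^e\}$.
\item Graded Ore extensions of AS regular algebras are AS regular, and global dimension goes up by one. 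So $\widetilde R$ is AS regular of dimension $4$.
\item It is quadratic with a PBW basis, hence Koszul by \cite{Priddy}, so $\ell = d = 4$.
\end{itemize}
Theorem \ref{thm: MM} then gives a tilting object $\bigoplus_{i=0}^3 \cO(i)$ with finite dimensional endomorphism algebra $\nabla\widetilde R$ of finite global dimension. A finite dimensional algebra of finite global dimension over a field is smooth and proper (assuming $k$ perfect, or using that $\nabla \widetilde R$ is a triangular algebra built from the semisimple algebra $k^4$). So $\widehat\cR \simeq \Perf(\nabla\widetilde R)$ is smooth and proper.

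\textbf{Step 2: the localization.} The element $t$ is central, regular and of degree $1$, and $\widetilde R[t^{-1}]_0 \cong R$ by setting $t=1$. Define $q_R$ as the exact functor
\[
\qgr \widetilde R \to \mathrm{mod}\, R, \qquad \cF \mapsto (\cF[t^{-1}])_0 .
\]
It is well defined because torsion modules die after inverting $t$. Then I would argue as in the commutative proof that $\coh(\mathbf P^n)\setminus\{t=0\}=\coh(\mathbb A^n)$. The functor $q_R$ is essentially surjective on finitely generated modules: a finitely generated $R$-module $N$ is the image of the homogenized submodule of $N\otimes_R \widetilde R[t^{-1}]$. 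Its kernel is the Serre subcategory of $t$-torsion objects, so $\mathrm{mod}\,R \simeq \qgr \widetilde R / \ker$. Since $R$ has finite global dimension, $D^b(\mathrm{mod}\,R)\simeq \Perf(R)$. The standard fact that the Verdier quotient of $D^b$ of an abelian category by $D^b$ supported in a Serre subcategory is $D^b$ of the quotient (when, as here, objects in the kernel admit the needed extensions, via the Noetherian property) then gives a localization $\widehat\cR \to \Perf(R)$, with kernel the complexes with $t$-torsion cohomology.

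\textbf{Step 3: finite generation of the kernel.} I expect this to be the main obstacle. Every $t$-torsion object has a finite filtration with subquotients annihilated by $t$, that is, objects of $\qgr(\widetilde R/t)$ pushed forward. Here $\widetilde R/t = k\langle x,y,\delta\rangle/([x,y],[\delta,x],[\delta,y]+0)$, which is just the commutative polynomial ring $k[x,y,\delta]$, since the term $yt$ dies. So $\qgr(\widetilde R/t)\simeq \coh(\mathbf P^2)$, which is generated by $\cO,\cO(1),\cO(2)$ (Beilinson). Hence the kernel is the thick subcategory generated by the three images $i_*\cO_{\mathbf P^2}(j)$ for $j=0,1,2$; equivalently, by the cones of $t\colon \cO(j-1)\to\cO(j)$. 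The remaining care is to check that the derived pushforward and the d\'evissage work inside $D^b(\qgr\widetilde R)$. This I would do using that $t$ is central and regular, so that $\cO(j)/t\cO(j-1)$ is exactly the cone.
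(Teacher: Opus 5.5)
Your proposal is correct and follows the paper's proof essentially step for step. You use Theorem~\ref{thm: MM} for smoothness and properness, the Serre quotient $\qgr\widetilde R\to\mathrm{mod}\,R$, $N\mapsto N[t^{-1}]_0$, with kernel the $t$-torsion objects together with Miyachi's theorem for the localization, and the filtration by powers of $t$ plus Beilinson on $\qgr k[x,y,\delta]\simeq\coh\mathbf P^2$ for the three generators of the kernel.

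The only difference is how you get AS regularity (along with Noetherianity and the PBW basis, which the paper simply asserts): you present $\widetilde R$ as the graded Ore extension $k[t,x,y][\delta;\,t^2\partial_x-yt\,\partial_y]$. The paper instead lifts AS regularity from $\widetilde R/(t)\cong k[x,y,\delta]$ along the central regular element $t$ via \cite[Proposition 2.1]{CKS}. Either route works.
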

\begin{proof}
The algebra $\widetilde R$ is a Noetherian domain with PBW basis $\{t^ax^by^c\delta^d\}$.
In particular, $t$ is a central non-zero-divisor and the polynomial ring $\widetilde R/(t)\simeq k[x,y,\delta]$ is of course Artin--Schelter regular of dimension $3$.
Hence $\widetilde R$ is Artin--Schelter regular of dimension $4$ by \cite[Proposition 2.1]{CKS}.
Its presentation is quadratic with a PBW basis, so $\widetilde R$ is Koszul, with Gorenstein parameter $4$.
Finally, $\widetilde R[t^{-1}]_0\simeq R$ via $x/t,\,y/t,\,\delta/t\mapsto x,y,\delta$, where $\widetilde R[t^{-1}]_0$ denotes the internal degree $0$ part of the localization $\widetilde R[t^{-1}]$, if we view $t,x,y,\delta$ as having degree one.

By Theorem \ref{thm: MM}, with $d=\ell=4$, $G=\bigoplus_{i=0}^3\cO(i)$ is a tilting object of $\widehat\cR$.
Hence $\widehat\cR\simeq\Perf(E)$ with $E=\End_{\widehat\cR}(G)=\nabla\widetilde R$ finite dimensional, of dimension $\sum_{0\le i\le j\le 3}\dim\widetilde R_{j-i}=56$, and of finite global dimension; in particular $\widehat\cR$ is smooth and proper.
The exact functor $N\mapsto N[t^{-1}]_0$ is a Serre quotient $\mathrm{qgr}\,\widetilde R\to\mathrm{mod}\,R$ whose kernel consists of the $t$-torsion objects.
By \cite{Miyachi} it induces a localization
\[
q_R : \widehat\cR \longrightarrow D^b(\mathrm{mod}\,R) = \Perf(R),
\]
whose kernel consists of the complexes with $t$-torsion cohomology.
A finitely generated $t$-torsion module has a finite filtration whose subquotients are modules over $\widetilde R/(t)=k[x,y,\delta]$, and $D^b(\mathrm{qgr}\,k[x,y,\delta])=D^b(\mathrm{coh}\,\mathbf P^2)$ is generated by $\cO,\cO(1),\cO(2)$ \cite{Beilinson}.
Hence
\[
\ker(q_R)=\thick(D_0,D_1,D_2),
\]
where $D_j$ is the image of $\cO_{\mathbf P^2}(j)$ under the pushforward from the hyperplane $t=0$ at infinity.
\end{proof}

\subsection{Categorical compactification and Calabi-Yau completion} \label{sec: compactification}

We first recall the notion of 
noncommutative $\mathbf P^1$-bundle \cite{VdBP1}.  For a dg category $\cA$ and an $\cA$-bimodule $N$, let $\mathbb P_\cA(N)$ be the dg category obtained by gluing two copies $\cA_0$, $\cA_1$ of $\cA$ along the bimodule $\cA_\Delta \oplus N$.  Its objects are $a_0$, $a_1$ for $a \in \cA$, with
\[
\Hom(a_0, a'_0) = \Hom(a_1, a'_1) = \cA(a, a'), \qquad \Hom(a_1, a'_0) = \cA(a, a') \oplus N(a', a), \qquad \Hom(a_0, a'_1) = 0,
\]
and composition given by the bimodule structure.  

Note that if $\cA$ is smooth and proper and $N$ is perfect, then the gluing bimodule $\cA_\Delta\oplus N$ is perfect, so by \cite{Orlov16},
$\mathbb P_\cA(N)$ is smooth and proper.

\begin{example}
For a dg algebra $E$ this is the category of modules over the triangular matrix algebra
\[
\mathbb P_E(N) = \begin{pmatrix} E & E \oplus N \\ 0 & E \end{pmatrix},
\]
with diagonal idempotents $e_0, e_1$ and $e_0\, \mathbb P_E(N)\, e_1 = E \oplus N$.
In particular, $\mathbb P_k(k)$ is the path algebra of the Kronecker quiver, whose module category Beilinson identified with  $D^b(\mathbf P^1)$. 
\end{example}

Write $u_a = (\mathrm{id}_a, 0) \in \Hom(a_1, a_0)$; these maps form a natural transformation $u$ from the inclusion of $\cA_1$ to the inclusion of $\cA_0$.  As one may expect, ``restricting to the vector bundle part of the projective bundle'' is a localization:  

\begin{lemma} \label{lem: P1 bundle}
 Let $\Phi: \mathbb P_\cA(N) \to T_\cA(N)$ be the dg functor which is the identity on both copies of $\cA$ and the inclusion $\cA_\Delta \oplus N \subset T_\cA(N)$ on $\Hom(a_1, a'_0)$.  Then $\Phi$ induces a localization $\Phi_!: \Perf(\mathbb P_\cA(N)) \to \Perf(T_\cA(N))$ with kernel $\thick\{\cofib(u_a) : a \in \cA\}$.  In particular, if $\cA = \thick(G)$, the kernel is $\thick(\cofib(u_G))$.
\end{lemma}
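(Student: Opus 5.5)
We will prove the lemma by identifying $\Phi_!$ as a Drinfeld quotient by the cones $\cofib(u_a)$ and then checking that the resulting category is $T_\cA(N)$. Write $\cP = \mathbb P_\cA(N)$, and let $\cP/\cK$ be the dg quotient by $\cK = \{\cofib(u_a) : a \in \cA\}$.

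The first step is to check that $\Phi$ kills $\cK$. Under $\Phi$, both $a_0$ and $a_1$ go to $a$, and $u_a$ goes to $\mathrm{id}_a$. Hence $\Phi_!(\cofib(u_a)) = 0$, and $\Phi$ factors through a functor $\bar\Phi : \cP/\cK \to T_\cA(N)$. Since $\cA_0 \to T_\cA(N)$ is essentially surjective, it suffices to show that $\bar\Phi$ is fully faithful. Then $\Perf(T_\cA(N)) \simeq \Idem(\Perf(\cP)/\thick\cK)$, and since $\Phi_!$ is a localization in the sense used in the paper (localization sequences of idempotent-complete categories), the kernel is $\thick\cK$.

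The second step computes Hom spaces in $\cP/\cK$. In the quotient $u_a$ becomes invertible, so every object is isomorphic to some $a_0$. We therefore need $\Hom_{\cP/\cK}(a_0,a'_0)$, and for this we use the right adjoint description. The quotient Hom equals $\Hom_\cP(h_{a_0}, L h_{a'_0})$, where $L$ is the localization onto $\cK^\perp$. A right $\cP$-module $M$ is the data of $\cA$-modules $M_0 = M|_{\cA_0}$ and $M_1 = M|_{\cA_1}$ together with a map $M_0 \otimes_\cA (\cA_\Delta \oplus N) \to M_1$, that is, maps $\alpha: M_0 \to M_1$ and $\beta: M_0\otimes_\cA N \to M_1$. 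The condition $\Hom(\cofib(u_a),M)=0$ for all $a$ says exactly that $\alpha$ is an equivalence. So $\cK^\perp$ is equivalent to $\cA$-modules $M_0$ equipped with a map $M_0\otimes_\cA N \to M_0$, i.e.\ to $\Mod_{T_\cA(N)}$. One direction of this is exactly restriction $\Phi^*$, and $\Phi^*$ is fully faithful onto $\cK^\perp$.

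Concretely, $\Phi^*$ is fully faithful because
\[
T_\cA(N)\otimes_{\cP} T_\cA(N) \simeq T_\cA(N).
\]
To verify this, we compute the bar-type resolution and see that tensoring over $\cP$ identifies the $0$- and $1$-copies via $u$. The remaining $N$-action then generates the tensor powers freely, since $T_\cA(N)$ is the free algebra over $\cA$ on $N$. Equivalently, $\Hom$ in $\cK^\perp$ from $L h_{a_0}$ to $L h_{a'_0}$ recovers $T_\cA(N)(a,a')$: the left adjoint $L$ of the inclusion $\cK^\perp\simeq \Mod_{T_\cA(N)}\hookrightarrow \Mod_\cP$ sends $h_{a_0}$ to the free $T_\cA(N)$-module $h_a\otimes_\cA T_\cA(N)$. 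This computation of the free module is the main obstacle. Our plan is to show that $\Phi_!(h_{a_0})$, i.e.\ the representable $T_\cA(N)(-,a)$, satisfies the universal property of $L h_{a_0}$ by Yoneda (Lemma \ref{bimodule yoneda}) applied to $\cK^\perp$-modules.

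Finally, if $\cA=\thick(G)$, then each $\cofib(u_a)$ lies in $\thick(\cofib(u_G))$. This holds because $a\mapsto \cofib(u_a)$ is an exact functor $\Perf\cA\to\Perf\cP$, given by $h_a\mapsto \cofib(h_{a_1}\to h_{a_0})$. This gives the last assertion.
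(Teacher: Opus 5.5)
\textbf{Verdict: genuine gap.} Your framing is sound, but the one step that carries the content of the lemma is asserted rather than proved.

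\textbf{What is fine.} Write $\mathcal P=\mathbb P_\cA(N)$ and $T=T_\cA(N)$.
\begin{itemize}
\item $\Phi_!$ kills each $\cofib(u_a)$.
\item Every object of $\mathcal P/\cK$ is isomorphic to some $a_0$, and $\Hom_{\mathcal P/\cK}(a_0,a_0')\simeq\Hom_{\mathcal P}(h_{a_0},Lh_{a_0'})$.
\item $\cK^\perp$ consists exactly of the modules $(F_0,F_1;\alpha,\beta)$ with $\alpha$ a quasi-isomorphism.
\item The reduction to a generator $G$, via exactness of $G\mapsto\cofib(u_G)$, is fine.
\end{itemize}

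\textbf{The gap.} The sentence identifying $\cK^\perp$ with $\Mod_T$ through $\Phi^*$ is precisely the content of the lemma, and no argument is given for it. Your route to it, showing that $\Phi_!(h_{a_0})$ has the universal property of $Lh_{a_0}$ ``by Yoneda applied to $\cK^\perp$-modules'', is circular. Using Yoneda there presupposes that every object of $\cK^\perp$ is $\Phi^*$ of a $T$-module, with $\Phi^*$ fully faithful.

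Your ``concrete'' alternative, $T\otimes_{\mathcal P}T\simeq T$, is only gestured at. Even once proved, it gives only full faithfulness of $\Phi^*$. That makes $\Phi_!$ a localization with $\Loc(\cK)\subseteq\ker\Phi_!$. The reverse inclusion is what matters. It is equivalent to the essential image of $\Phi^*$ being all of $\cK^\perp$, or to the cofiber of the unit $h_{a_0}\to\Phi^*\Phi_!h_{a_0}$ lying in $\Loc(\cK)$. Nothing in your proposal addresses this, and without it $\bar\Phi$ need not be fully faithful.

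\textbf{How the paper closes it.} It uses the two-term resolution $0\to j_{1!}(F_0\otimes_\cA B)\to j_{0!}F_0\oplus j_{1!}F_1\to F\to 0$, where $B=\cA_\Delta\oplus N$. This gives $\Phi_!F\simeq\cofib(F_0\otimes_\cA N\otimes_\cA T\to F_1\otimes_\cA T)$.
\begin{itemize}
\item For $F=\Phi^*G$ this is the standard resolution of $G$, which gives full faithfulness.
\item When $\alpha$ is a quasi-isomorphism, a filtration by the number of $N$-factors shows $F_1\to\Phi_!F$ is a quasi-isomorphism. So the unit is an equivalence on all of $\cK^\perp$.
\end{itemize}

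\textbf{A shorter fix inside your framework.} You only need $L$ on representables.
\begin{itemize}
\item Put $T_a=T(-,a)=\bigoplus_{m\ge 0}h_a\otimes_\cA N^{\otimes m}$. Then $\Phi^*T_a=(T_a,T_a;\mathrm{id},\mathrm{mult})$ lies in $\cK^\perp$.
\item The map $h_{a_0}=(h_a,h_a\otimes_\cA B)\to\Phi^*T_a$ includes the tensor-length $0$ summand in the first component and the length $0,1$ summands in the second.
\item Its cofiber is $\bigl(\bigoplus_{m\ge1}h_a\otimes N^{\otimes m},\ \bigoplus_{m\ge2}h_a\otimes N^{\otimes m}\bigr)$.
\item Filter this by $N$-degree. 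The graded pieces are $(h_a\otimes N^{\otimes p},\,h_a\otimes N^{\otimes (p+1)};\,0,\,\mathrm{id})$ for $p\ge1$, and each is the cone of $u$ at the module $h_a\otimes_\cA N^{\otimes p}$. So the cofiber lies in $\Loc(\cK)$.
\item Hence $Lh_{a'_0}\simeq\Phi^*T_{a'}$, and $\Hom_{\mathcal P/\cK}(a_0,a'_0)\simeq (\Phi^*T_{a'})_0(a)=T(a,a')$. This is exactly the full faithfulness of $\bar\Phi$ that your argument needs.
\end{itemize}
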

\begin{proof}
Write $T=T_\cA(N)$ and $B=\cA_\Delta\oplus N$.
A right $\mathbb P_\cA(N)$-module $F$ is the same as a pair of right $\cA$-modules $F_0,F_1$ together with a map $F_0\otimes_\cA B\to F_1$, that is, maps $\alpha:F_0\to F_1$ and $\beta:F_0\otimes_\cA N\to F_1$.
Here $\alpha$ is the action of $u$, so $F$ is right orthogonal to all $\cofib(u_a)$ if and only if $\alpha$ is a quasi-isomorphism.
Restriction along $\Phi$ sends a $T$-module $G$ to $\Phi^*G=(G,G;\mathrm{id},\beta_G)$, where $\beta_G$ is the action of $N$ on $G$; this lies in the right orthogonal of $\{\cofib(u_a) \} $.
We show that $\Phi^*$ is indeed an equivalence onto the right orthogonal.  Then $\Phi_!$ is a localization with kernel generated by $\{\cofib(u_a) \} $. 

Let $j_i: \cA \to \mathbb P_\cA(N)$, $a \mapsto a_i$, be the two inclusions.
Then $j_{0!}G=(G,G\otimes_\cA B; \mathrm{id})$ and $j_{1!}G=(0,G; 0   )$, and $\Phi_!j_{0!}G=\Phi_!j_{1!}G=G\otimes_\cA T$.
Every $F$ has the resolution
\[
0\to j_{1!}(F_0\otimes_\cA B)\to j_{0!}F_0\oplus j_{1!}F_1\to F\to0.
\]
Applying $\Phi_!$ and cancelling the summand $F_0\otimes_\cA T$, which comes from the summand $\cA_\Delta$ of $B$, gives
\begin{equation} \label{eq: Phi shriek}
\Phi_!F\simeq\cofib\left(F_0\otimes_\cA N\otimes_\cA T\to F_1\otimes_\cA T\right)
\end{equation}
where the map is given by
\[
\qquad f\otimes n\otimes t\mapsto\alpha(f)\otimes nt-\beta(f\otimes n)\otimes t.
\]
For $F=\Phi^*G$ this is the standard resolution
$G\otimes_\cA N\otimes_\cA T\to G\otimes_\cA T\to G$ of a module over a tensor category, so $\Phi_!\Phi^*G\simeq G$, and $\Phi^*$ is fully faithful.

Now let $F$ lie in the right orthogonal of $\{\cofib(u_a) \} $, so $\alpha: F_0 \to F_1$ is a quasi-isomorphism. 
Filter both terms of \eqref{eq: Phi shriek} by the total number of $N$-factors, including the initial $N$ in the source.  The map preserves this filtration.  On the graded piece of degree $m\ge1$ it is
\[
F_0\otimes_\cA N^{\otimes_\cA m}\xrightarrow{\ \alpha\otimes1\ }F_1\otimes_\cA N^{\otimes_\cA m},
\]
which is a quasi-isomorphism.  In degree $0$ the source vanishes and the target is $F_1$.  Hence the inclusion $F_1\to\Phi_!F$ is a quasi-isomorphism of $\cA$-modules.

The two components of the unit $F\to\Phi^*\Phi_!F$ are
\[
F_0\xrightarrow{\alpha}F_1\longrightarrow\Phi_!F,
\qquad
F_1\longrightarrow\Phi_!F.
\]
Both are quasi-isomorphisms, so the unit is an equivalence.
Thus $\Phi^*$ is an equivalence onto the right orthogonal.

\end{proof}

We  need the following lemma, which is the noncommutative analogue of the fact that if $E \to B$ is a bundle and $Z \subset B$ is closed, then $E|_{B \setminus Z} = E \setminus E|_Z$. 

\begin{lemma} \label{lem: tensor localization}
Let $q: \cA \to \cA'$ be a localization with kernel $\mathcal K$.  Then the induced functor $\Perf(T_\cA(N)) \to \Perf(T_{\cA'}(q_!N))$ is a localization with kernel $\thick\{L \otimes_\cA T_\cA(N) : L \in \mathcal K\}$.  In particular, if $\mathcal K=\thick(K)$, the kernel is $\thick(K \otimes_\cA T_\cA(N))$.
\end{lemma}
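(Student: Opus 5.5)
The plan is to realize the induced functor as an extension of scalars along a dg functor $Q: T_\cA(N) \to T_{\cA'}(q_!N)$ and to identify the right adjoint of $Q_!$ on $\Mod$ as fully faithful with image the right orthogonal of $\{L \otimes_\cA T_\cA(N) : L \in \cK\}$. Write $T = T_\cA(N)$ and $T' = T_{\cA'}(q_!N)$, with $q_!N = \cA' \otimes_\cA N \otimes_\cA \cA'$. The functor $Q$ is $q$ on objects; on the tensor bimodule it is the map $N^{\otimes_\cA m} \to (q_!N)^{\otimes_{\cA'} m}$ induced by the units $\cA \to \cA'$. We may model $q$ as a dg quotient, so that $\cA'$ has the same objects as $\cA$ and $q^*q_! \simeq$ the localization $L_\cK$ onto $\cK^\perp$.

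The first step is to compute the restriction $Q^*$ of a $T'$-module. A right $T'$-module $G$ is the same as a right $\cA'$-module $G$ with an action $G \otimes_{\cA'} q_!N \to G$. Since $q^*$ is fully faithful on modules, $G \otimes_{\cA'} \cA' \otimes_\cA N \otimes_\cA \cA' \simeq q^*G \otimes_\cA N \otimes_\cA \cA'$. Maps from this to $G$ correspond, by adjunction, to maps $q^*G \otimes_\cA N \to q^*G$. Hence $Q^*$ identifies $\Mod_{T'}$ with the $T$-modules $F$ whose underlying $\cA$-module lies in $\cK^\perp = \mathrm{im}(q^*)$. I would verify this with the same style of argument as in Lemma \ref{lem: P1 bundle}. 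First, take the standard resolution $F \otimes_\cA N \otimes_\cA T \to F \otimes_\cA T \to F$ of a $T$-module. Applying $Q_!$ gives $Q_!F \simeq \cofib(q_!F \otimes_{\cA'} q_!N \otimes_{\cA'} T' \to q_!F \otimes_{\cA'} T')$. Then filter both sides by the number of $N$-factors: the graded pieces are $q_!(F) \otimes_{\cA'} (q_!N)^{\otimes m}$ versus $q_!(F \otimes_\cA N^{\otimes m})$-type terms. When $F|_\cA \in \cK^\perp$, the unit $F \to Q^*Q_!F$ becomes a quasi-isomorphism on the underlying $\cA$-modules, because $q^*q_!F|_\cA \simeq F|_\cA$. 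In the same way $Q_!Q^*G \simeq G$.

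The second step converts this into the kernel statement. For $L \in \cK$ and a $T$-module $F$, we have $\Hom_T(L \otimes_\cA T, F) \simeq \Hom_\cA(L, F|_\cA)$. Hence the right orthogonal of $\{L \otimes_\cA T\}$ is exactly $\{F : F|_\cA \in \cK^\perp\} = \mathrm{im}(Q^*)$. Since $Q^*$ is fully faithful with this image, $Q_!$ is a localization of $\Mod_T$ with kernel $\Loc\{L \otimes_\cA T\}$. The objects $L \otimes_\cA T$ are compact when $L$ is. Restricting to compact objects via Neeman's theorem (Thomason–Trobaugh) then gives that $\Perf(T) \to \Perf(T')$ is a localization up to idempotent completion, with kernel the thick closure stated. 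The final clause follows because $\thick(K)\otimes_\cA T = \thick(K \otimes_\cA T)$.

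The main obstacle is the first step: one must check that the filtration comparison really produces quasi-isomorphisms, that is, that $q^*$ commutes with the relevant tensor powers. Concretely, for $F|_\cA \in \cK^\perp$ one needs $q^*(q_!F \otimes_{\cA'} (q_!N)^{\otimes m}) \simeq$ the localization of $F \otimes_\cA N^{\otimes m}$, computed iteratively as $L_\cK(L_\cK(F \otimes N) \otimes N)\cdots$. I would handle this by induction on $m$, using $q_!q^* \simeq \mathrm{id}$ and the identity $q_!(X \otimes_\cA N) \simeq q_!X \otimes_{\cA'} q_!N$ whenever $X \in \cK^\perp$. If this becomes messy, the fallback is to prove $Q_!Q^* \simeq \mathrm{id}$ directly via the bar resolution over $T'$, and to deduce full faithfulness of $Q^*$ from that.
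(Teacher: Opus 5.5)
Your proposal follows essentially the same route as the paper: you identify the right orthogonal of $\{L\otimes_\cA T\}$ via $\Hom_T(L\otimes_\cA T,F)\simeq\Hom_\cA(L,F)$, apply $Q_!$ to the standard resolution using the base-change identity $q_!(X\otimes_\cA N)\simeq q_!X\otimes_{\cA'}q_!N$ for $X\in\cK^\perp$, and run the filtration argument of Lemma \ref{lem: P1 bundle}. One small correction: your cofiber formula for $Q_!F$ requires $F|_\cA\in\cK^\perp$, since for general $F$ the source term is $q_!(F\otimes_\cA N)\otimes_{\cA'}T'$; with that restriction your ``main obstacle'' disappears, because in each degree $m\ge1$ the associated graded map is just the identity of $q_!F\otimes_{\cA'}(q_!N)^{\otimes m}$, which gives $q_!F\simeq Q_!F$ with no iterated-localization compatibility needed.
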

\begin{proof}
Write $N' = q_!N$,  $T=T_\cA(N)$ and $T'=T_{\cA'}(N')$, let $\Psi:T\to T'$ be the induced dg functor, and let $\iota$ be the fully faithful right adjoint of $q$ on module categories; its image is $\mathcal K^\perp$.
For $L\in\mathcal K$ and a $T$-module $G$ we have $\Hom_T(L\otimes_\cA T,G)\simeq\Hom_\cA(L,G)$.
So $G$ is right orthogonal to all $L\otimes_\cA T$ with $L\in\mathcal K$ if and only if its underlying $\cA$-module lies in $\mathcal K^\perp$, i.e. has the form $\iota G'$.  This holds for $G=\Psi^*G'$.
For such $G$, base change along $q$ gives $(\iota G'\otimes_\cA N)\otimes_\cA\cA'\simeq G'\otimes_{\cA'}N'$, so applying $\Psi_!$ to the standard resolution $G\otimes_\cA N\otimes_\cA T\to G\otimes_\cA T\to G$ yields
\[
\Psi_!G\simeq\cofib\left(G'\otimes_{\cA'}N'\otimes_{\cA'}T'\to G'\otimes_{\cA'}T'\right).
\]
For $G=\Psi^*G'$ this is the standard resolution of $G'$, so $\Psi_!\Psi^*G'\simeq G'$, and $\Psi^*$ is fully faithful.
For general $G$ in the right orthogonal, the filtration argument of Lemma \ref{lem: P1 bundle} shows that $G'\to\Psi_!G$ is a quasi-isomorphism of $\cA'$-modules, so $\Psi_!G=0$ implies $G'=0$, hence $G=0$.
As in Lemma \ref{lem: P1 bundle}, it follows that $\Psi^*$ identifies $\Mod_{T'}$ with the right orthogonal of $\{L\otimes_\cA T:L\in\mathcal K\}$.
\end{proof}

Call a localization $q:\widehat\cC\to\cC$ of dg categories a \emph{categorical compactification} of $\cC$ if $\widehat\cC$ is smooth and proper.

\begin{proposition}\label{prop: compactification-relative-CY}
Let $f:\cA\to\cB$ be a dg functor between smooth dg categories.  Suppose that $q:\widehat\cB\to\cB$ is a categorical compactification and that $f$ lifts to a dg functor $\widehat f:\cA\to\widehat\cB$ with $q\widehat f\simeq f$.  

Consider the $\widehat\cB$-bimodule
$
\widehat J=\cofib\left(
\widehat\cB^!\longrightarrow\widehat f_!(\cA^!)
\right)[n-2]$,
where the map is the dual of the composition map $\widehat f_!(\cA_\Delta)\to\widehat\cB_\Delta$. 
Then there is a categorical compactification:
$
\Perf(\mathbb P_{\widehat\cB}(\widehat J))
\longrightarrow\Perf(\Pi_n(\cB,\cA))
$.

If moreover $\widehat\cB=\thick(G)$ and $\ker q=\thick(K)$ for objects $G$ and $K$, the compactification constructed below has kernel $\thick(Z)$ for a single object $Z$.
\end{proposition}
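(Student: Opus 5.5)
The functor will be the composite of two localizations: a localization $\mathbb P_{\widehat\cB}(\widehat J)\to T_{\widehat\cB}(\widehat J)$ given by Lemma \ref{lem: P1 bundle}, followed by a localization $T_{\widehat\cB}(\widehat J)\to T_\cB(q_!\widehat J)$ given by Lemma \ref{lem: tensor localization}. The proof then reduces to three things: identifying $q_!\widehat J$ with $\Theta_{\cB/\cA}[n-1]$, checking that the source is smooth and proper, and describing the kernel.

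For the source, $\widehat\cB$ is smooth and proper. Moreover $\widehat\cB^!$ is perfect, and $\widehat f_!(\cA^!)$ is perfect because $\cA$ is smooth and $\widehat f_!$ sends the perfect bimodule $\cA^!$ to a perfect bimodule (representables go to representables). So $\widehat J$ is a perfect $\widehat\cB$-bimodule, and by the remark after the definition of $\mathbb P_\cA(N)$ (Orlov's gluing), $\mathbb P_{\widehat\cB}(\widehat J)$ is smooth and proper.

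The key step is the identification $q_!\widehat J\simeq \Theta_{\cB/\cA}[n-1]$, where $q_!(-)=\cB\otimes_{\widehat\cB}(-)\otimes_{\widehat\cB}\cB$. Since $q\widehat f\simeq f$, we have $q_!\widehat f_!(\cA^!)\simeq f_!(\cA^!)$, and $q_!(\widehat f_!(\cA_\Delta)\to\widehat\cB_\Delta)$ is the map $c$ for $f$. For the dualizing bimodule I would use the bimodule form of Lemma \ref{lem:localization and dualizing}: for a localization of smooth categories, $q_!(\widehat\cB^!)\simeq\cB^!$. This follows from $\Phi_\cB\simeq\pi\Phi_{\widehat\cB}\iota$ together with $q_!$ being computed as $\pi(-)\iota$ on both sides; equivalently, $(q_!\widehat\cB_\Delta)^!\simeq q_!(\widehat\cB^!)$ and $q_!\widehat\cB_\Delta\simeq\cB_\Delta$ for a localization. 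Then I must check that, under these identifications, $q_!$ of the dual map $\widehat\cB^!\to\widehat f_!(\cA^!)$ is $c^!$. This is naturality of duality $(-)^!$ with respect to $q_!$ applied to the morphism $\widehat c$; I expect this compatibility to be the main obstacle, and I would verify it on the level of the natural isomorphism $q_!(P^!)\simeq (q_!P)^!$ for perfect bimodules $P$, which is functorial in $P$. Granting this, $q_!\widehat J\simeq\Theta_{\cB/\cA}[n-1]$, so $T_\cB(q_!\widehat J)=\Pi_n(\cB,\cA)$.

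Composing the two localizations gives the claimed categorical compactification; passing to $\Perf$ gives a localization because both lemmas are statements about $\Perf$. For the kernel, Lemma \ref{lem: P1 bundle} gives kernel $\thick(\cofib(u_G))$ for the first step. Lemma \ref{lem: tensor localization} gives kernel $\thick(K\otimes_{\widehat\cB}T_{\widehat\cB}(\widehat J))$ for the second; this object lifts to $\mathbb P_{\widehat\cB}(\widehat J)$ as $j_{0!}K$ (or $j_{1!}K$), which maps to it under $\Phi_!$. The kernel of a composite of localizations is generated by the first kernel together with lifts of generators of the second, so the total kernel is $\thick(\cofib(u_G), j_{1!}K)$. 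Setting $Z=\cofib(u_G)\oplus j_{1!}K$ gives a single generator.
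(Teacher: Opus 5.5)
Your proposal is correct and follows the paper's proof essentially step for step. The ingredients are the same: the composite of the localizations from Lemma \ref{lem: P1 bundle} and Lemma \ref{lem: tensor localization}; smoothness and properness of the source via perfectness of $\widehat J$ and Orlov's gluing; the identification $q_!\widehat J\simeq\Theta_{\cB/\cA}[n-1]$, obtained because $q_!$ is compatible with $(-)^!$ on perfect bimodules (the paper cites Keller for this); and the kernel description. Your choice of $j_{1!}K$ in place of the paper's $j_{0!}K$ is harmless, since $\Phi_!j_{0!}K\simeq\Phi_!j_{1!}K\simeq K\otimes_{\widehat\cB}T_{\widehat\cB}(\widehat J)$.
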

\begin{proof}
The bimodules $\widehat\cB^!$ and $\widehat f_!(\cA^!)$ are perfect, since $\widehat\cB$ and $\cA$ are smooth and $\widehat f_!$ preserves perfect bimodules; hence so is $\widehat J$.  Thus $\Perf(\mathbb P_{\widehat\cB}(\widehat J))$ is smooth and proper.

Since $q$ is a localization, base change $q_!$ takes $\widehat\cB^!$ to $\cB^!$ and commutes with duals of perfect bimodules \cite[Lemma 2.9 and Proposition 3.10]{keller2009deformed}.  Since $q\widehat f\simeq f$, it takes $\widehat f_!(\cA^!)$ to $f_!(\cA^!)$, and it takes the composition map for $\widehat f$ to the composition map for $f$, hence the dual of the former to $c^!$.  Thus $q_!\widehat J\simeq\cofib(c^!)[n-2]=\Theta_{\cB/\cA}[n-1]$.

By Lemma \ref{lem: P1 bundle}, $\Perf(\mathbb P_{\widehat\cB}(\widehat J))\to\Perf(T_{\widehat\cB}(\widehat J))$ is a localization.
By Lemma \ref{lem: tensor localization}, applied to $q$ and the base change map $\widehat J\to q_!\widehat J\simeq\Theta_{\cB/\cA}[n-1]$, the functor $\Perf(T_{\widehat\cB}(\widehat J))\to\Perf(T_\cB(\Theta_{\cB/\cA}[n-1]))=\Perf(\Pi_n(\cB,\cA))$ is a localization.
Hence the composite
$
\Perf(\mathbb P_{\widehat\cB}(\widehat J))
\longrightarrow\Perf(T_{\widehat\cB}(\widehat J))
\longrightarrow\Perf(\Pi_n(\cB,\cA))
$
is a localization from a smooth and proper dg category, i.e. a categorical compactification.

If $\widehat\cB=\thick(G)$ and $\ker q=\thick(K)$, the kernels of the two functors are $\thick(\cofib(u_G))$ and $\thick(K\otimes_{\widehat\cB}T_{\widehat\cB}(\widehat J))$.
In the notation of the proof of Lemma \ref{lem: P1 bundle}, the generator of the second kernel is the image of $j_{0!}K$, as $\Phi_!j_{0!}K=K\otimes_{\widehat\cB}T_{\widehat\cB}(\widehat J)$.
Hence the kernel of the composite is $\thick(Z)$ with $Z=\cofib(u_G)\oplus j_{0!}K$.
\end{proof}

\begin{proposition} \label{prop: compactification}
    Let $\cB$ be the dg category of Example~\ref{eg:nontrivial ff lcy}. 
    Then there is a finite dimensional dg algebra $\Gamma$ with $\Perf(\Gamma)$ smooth and proper, and an object $Z \in \Perf(\Gamma)$, such that
    \[
    \Perf(\Pi_2(\cB, k)) \simeq \Idem\left( \Perf(\Gamma)/\thick(Z) \right).
    \]
\end{proposition}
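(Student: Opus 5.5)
The plan is to apply Proposition~\ref{prop: compactification-relative-CY} with $n=2$, $\cA=\Perf(k)$ and $f$ the functor $k\xrightarrow{X}\cB$. For this I need a categorical compactification $\widehat\cB\to\cB$ with a single generator and a single-object kernel, together with a lift of $X$.

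First, build $\widehat\cB$ from Proposition~\ref{prop:cat cpt R}. The localization $q_R:\widehat\cR\to\Perf(R)$ has kernel $\thick(D_0,D_1,D_2)$. Compose it with the quotient $\Perf(R)\to\Idem(\Perf(R)/M_0)=\cB$. The perfect module $M_0$ lifts to some $\widehat M_0\in\widehat\cR$, since $q_R$ is essentially surjective up to summands; concretely, one can take the homogenized module $\widetilde R/(\delta)\widetilde R$ viewed in $\qgr$. The composite $\widehat\cR\to\cB$ is then a localization, up to idempotent completion, with kernel $\thick(D_0\oplus D_1\oplus D_2\oplus \widehat M_0)$. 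So we set $\widehat\cB=\widehat\cR\simeq\Perf(E)$, which is smooth and proper with generator $G=\bigoplus_{i=0}^3\cO(i)$, and $K=D_0\oplus D_1\oplus D_2\oplus\widehat M_0$. The lift $\widehat f$ of $X$ is a lift $\widehat M_1$ of $M_1$, for instance the image of $\widetilde R/(\delta-t)\widetilde R$.

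One point to check is that the composite of two localizations is a localization whose kernel is generated by the stated objects. This follows from Verdier quotient calculus: the kernel of the second functor is $\thick(M_0)$, and its preimage is generated by the first kernel together with any lift of $M_0$. Idempotent completion is harmless throughout, because everything is eventually compared after $\Idem$.

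Applying Proposition~\ref{prop: compactification-relative-CY} then gives a localization $\Perf(\mathbb P_{\widehat\cB}(\widehat J))\to\Perf(\Pi_2(\cB,k))$ whose kernel is $\thick(Z)$ with $Z=\cofib(u_G)\oplus j_{0!}K$. I expect the main obstacle to be bookkeeping rather than a genuine difficulty. Lemmas~\ref{lem: P1 bundle} and~\ref{lem: tensor localization} are phrased for localizations of small categories, and the target identification should be read as $\Idem(\Perf(\mathbb P)/\thick(Z))\simeq\Perf(\Pi_2(\cB,k))$, by Thomason–Neeman. One also has to check that $q_!$ applied to $\widehat J$ matches $\Theta_{\cB/k}[1]$ even though $\cB$ was itself idempotent completed; this is fine because bimodule categories ignore idempotent completion.

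Finally, produce $\Gamma$. The category $\Perf(\mathbb P_{\widehat\cB}(\widehat J))$ is generated by $G_0\oplus G_1$, so it is $\Perf$ of the triangular dg algebra
\[
\begin{pmatrix} E & E\oplus \widehat J(G,G)\\ 0 & E\end{pmatrix}.
\]
Here $E$ is finite dimensional and $\widehat J(G,G)$ has finite dimensional cohomology, because $\widehat J$ is a perfect bimodule over a proper category. Replacing $\widehat J(G,G)$ by a finite dimensional model, such as a minimal model or a finite semi-free $E$-bimodule resolution truncated appropriately, gives a finite dimensional dg algebra $\Gamma$. This replacement can be done since $E$ is an ordinary finite dimensional algebra and the bimodule is perfect, so it is quasi-isomorphic to a bounded complex of finitely generated projective $E^e$-modules, which is finite dimensional. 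Smoothness and properness of $\Perf(\Gamma)$ then follow from the Orlov gluing remark preceding Lemma~\ref{lem: P1 bundle}.
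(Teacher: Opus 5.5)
Your proposal is correct and follows the paper's proof. Like the paper, you lift $M_0$ and $M_1$ to $\widetilde R/(\delta-\lambda t)\widetilde R$ in $\widehat\cR$, and you use the composite localization $\widehat\cR\to\Perf(R)\to\cB$ with kernel $\thick(D_0\oplus D_1\oplus D_2\oplus\widehat M_0)$. You then apply Proposition~\ref{prop: compactification-relative-CY} with $n=2$ and $\cA=k$, which gives $\Gamma=\mathbb P_E(\widehat J)$. Your added step of replacing $\widehat J$ by a bounded complex of finitely generated projective $E^e$-modules justifies the finite dimensionality of $\Gamma$, which the paper leaves implicit. That is the right choice; a minimal model would only give an $A_\infty$-bimodule.
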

\begin{proof}
Preserving the notation of Example \ref{eg:nontrivial ff lcy} and Proposition \ref{prop:cat cpt R}: since $\widetilde R$ is a domain, left multiplication by $\delta-\lambda t$ is injective, so
\[
\overline M_\lambda:=\cofib\left(\cO(-1)\xrightarrow{\ \delta-\lambda t\ }\cO\right)\in\widehat\cR
\]
is represented by the right module $\widetilde R/(\delta-\lambda t)\widetilde R$, and $q_R(\overline M_\lambda)=M_\lambda$.
Hence the composite
\[
q: \widehat\cR \xrightarrow{\ q_R\ } \Perf(R) \longrightarrow \cB
\]
is a localization with kernel $\thick(K_0)$, where $K_0=D_0\oplus D_1\oplus D_2\oplus\overline M_0$, and $\overline X:=\overline M_1$ lifts $X$.

Applying Proposition~\ref{prop: compactification-relative-CY} gives the assertion. In particular, under $\widehat\cR\simeq\Perf(E)$,
with the bimodule
$\widehat J=\cofib(\widehat\cR^!\to\widehat f_!(k))$,
where $\widehat f:k\to\widehat\cR$ selects $\overline M_1$,
we have $\Gamma = \mathbb P_E(\widehat J)$. 
\end{proof}

\appendix

\section{A finite dimensional model}
\label{app:finite-dg-model}

We give a finite dimensional dg algebra $B$, with $\dim_k B = 17$, quasi-isomorphic to the dg algebra $A$ of Example~\ref{dga which does not cogenerate}, together with a two dimensional dg $B$-module $C$ with $\Hom_B(C, B) = 0$.  
\begin{remark}
    Every proper {\em connective} (i.e. $H^{>0}=0$) dg algebra over a field admits a quasi-isomorphic finite dimensional model by a theorem of Raedschelders and Stevenson
    \cite[Appendix~A, Theorem~A.3]{Goodbody2026}.  However, the same is not true for proper \emph{coconnective} (i.e. $H^{<0}=0$) dg algebras, even smooth ones. Indeed, by \cite[Corollary~2.21]{Orlov2020}, the $K_0$ of 
    a finite dimensional smooth and proper dga is a finitely generated abelian group.  But for an elliptic curve $E/\mathbb{C}$, and any classical generator $G$ of $\Perf(E)$,  $\End_E(G)$ is a smooth and proper dg algebra for which
    $$ K_0(\Perf(E)) \cong K_0(E) \cong \mathbb{Z} \oplus \operatorname{Pic}(E).
    $$
    This example is recorded in \cite[Remark~3.1.4]{Goodbody2025Thesis}. See also \cite[Example~3.1.6]{Goodbody2025Thesis} for a counterexample in the module analog of the claim.
\end{remark}

We use the notation and conventions of Section~\ref{sec: example dga}: all modules are right modules, and $\Hom$, $\End$ and $\otimes$ denote derived functors, computed by semi-free resolutions.  Recall that
\[
\Lambda = k\langle x, y \rangle/(x^2, y^2, xy - qyx), \qquad |x| = |y| = 2, \qquad V = (\Lambda/(x+y)\Lambda)[-1] = ku \oplus kv,
\]
with $q \in k^\times$ not a root of unity, $ux = -v$ and $uy = v$.  Paths in a quiver are composed from right to left, and $\langle \ldots \rangle$ denotes the $k$-span.

Consider the quiver
\[
Q = \begin{tikzcd}[column sep=7em]
1 \arrow[r, bend right=18, "\gamma"']
  \arrow[r, bend right=48, "\delta"']
& 2 \arrow[l, bend right=18, "\beta"']
    \arrow[l, bend right=48, "\alpha"']
    \arrow[out=45, in=-45,
       start anchor={[yshift=4pt]east},
       end anchor={[yshift=-4pt]east},
       min distance=9mm, "\tau"]
\end{tikzcd}
\]
with all arrows of degree $1$, and let
\begin{equation}\label{eq:fd17-presentation}
B = kQ/(\beta\delta,\ \delta\beta,\ \beta\tau,\ \tau\delta,\ \tau^2,\ \gamma\beta - q\delta\alpha,\ \gamma\alpha - (q-1)\delta\alpha),
\end{equation}
with the differential determined by
\begin{equation}\label{eq:fd17-differential}
d\beta = \alpha\tau, \qquad d\delta = \tau\gamma, \qquad d\alpha = d\gamma = d\tau = 0,
\end{equation}
and $de_1 = de_2 = 0$ for the vertex idempotents $e_1, e_2$.

\begin{theorem}\label{thm:fd17-model}
The formulas \eqref{eq:fd17-presentation} and \eqref{eq:fd17-differential} define a dg algebra $B$ of total dimension $17$, with
\[
\begin{array}{c|ccccc}
i & 0 & 1 & 2 & 3 & 4 \\ \hline
\dim_k B^i & 2 & 5 & 6 & 3 & 1 \\
\dim_k H^i(B) & 2 & 3 & 3 & 2 & 1 .
\end{array}
\]
There is a zigzag of quasi-isomorphisms of dg algebras between $B$ and $A = \End_\Lambda(\Lambda \oplus V)$, under which $e_1$ corresponds to the projection $e$ onto $\Lambda$.  Moreover, the quotient
\[
C = B/Be_1B = k[\tau]/(\tau^2), \qquad |\tau| = 1, \qquad d_C = 0,
\]
viewed as a right dg $B$-module, satisfies $C \ne 0$ and $\Hom_B(C, B) = 0$.  In particular, $B$ does not cogenerate $\Mod_B$, and this is witnessed by a module which is itself finite dimensional, not merely of finite dimensional cohomology.
\end{theorem}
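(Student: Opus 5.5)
The proof splits into three parts: the structure of $B$ itself, a zigzag of quasi-isomorphisms to $A$, and the vanishing $\Hom_B(C,B)=0$.

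\textbf{Structure of $B$.} First check that the differential is well defined. Each relation must be sent into the ideal, and $d^2=0$ must hold.
\begin{itemize}
\item $d(\beta\delta) = \alpha\tau\delta \pm \beta\tau\gamma$, which lies in the ideal since $\tau\delta$ and $\beta\tau$ are relations.
\item $d(\gamma\beta - q\delta\alpha) = \pm\gamma\alpha\tau \mp q\tau\gamma\alpha$. Rewriting via $\gamma\alpha=(q-1)\delta\alpha$ and using $\tau\delta=0$ and $\delta\alpha\tau=0$ (a consequence of the relations), this should vanish.
\item The remaining relations are checked the same way.
\end{itemize}
A basis of $B$ is then obtained by listing the surviving paths by length. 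Since all arrows have degree $1$, degree equals path length:
\begin{itemize}
\item degree $0$: $e_1,e_2$;
\item degree $1$: the five arrows;
\item degree $2$: $\alpha\tau, \tau\gamma, \alpha\gamma, \gamma\alpha, \beta\gamma, \delta\alpha,\dots$, reduced modulo the relations to $6$;
\item degrees $3$ and $4$: similarly reduced.
\end{itemize}
This gives the table $2,5,6,3,1$, and computing $H^*$ from the explicit differential gives $2,3,3,2,1$.

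\textbf{Quasi-isomorphism to $A$.} I would use the bigraded/semi-free description from Section~\ref{sec: example dga}.
\begin{itemize}
\item Replace $V$ by the two-dimensional quotient $V=ku\oplus kv$ and $V^*$ by the finite model $D=kw\oplus kz$.
\item Model $\End_\Lambda(V)$ by a finite model $E$ with cohomology $k\oplus k[-1]\oplus k[-2]$. Such a model is $ku_0\oplus kv_0\oplus k v_1$ together with $\tau\leftrightarrow$ the degree-one class, so that $k[\tau]/\tau^2$ appears at vertex $2$.
\item Write down an explicit dg map $B\to \End_\Lambda(F\oplus \Lambda)$, with $F$ the semi-free resolution \eqref{eq: resolution of V}.
\item Vertex $1$ goes to $\Lambda$ via $e_1$. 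The $\Lambda$-generators $x,y$ are realized as $\alpha\gamma,\ \beta\gamma$-type loops at vertex $1$, up to the relation $\gamma\alpha=(q-1)\delta\alpha$, which encodes $xy=qyx$.
\item $\gamma,\delta$ go to maps $\Lambda\to F$, i.e.\ classes in $V$; $\alpha,\beta$ go to maps $F\to\Lambda$, i.e.\ to $w$ and a homotopy; $\tau$ goes to $u_1$-type data.
\end{itemize}
Checking multiplicativity is finite bookkeeping. The map is a quasi-isomorphism because it is so on each corner $e_iBe_j$: compare $e_1Be_1$ with $\Lambda$, $e_2Be_1$ and $e_1Be_2$ with $V$ and $V^*$, and $e_2Be_2$ with $\End(V)$, using the dimensions $4,2,2,3$ computed in Section~\ref{sec: example dga}.

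If a direct map is awkward, I would instead pass through $\End_\Lambda(F\oplus\Lambda)$ with an intermediate dg subalgebra, which gives a zigzag. \textbf{This step is the main obstacle}: correctly choosing signs and homotopies so that the relations hold strictly.

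\textbf{Vanishing of $\Hom_B(C,B)$.} Under the quasi-isomorphism, $e_1\mapsto e$, so $Be_1\otimes_{e_1Be_1}e_1B \to B$ corresponds to the map in Example~\ref{dga which does not cogenerate}, and its cofiber is $M$. It then suffices to show that $C=B/Be_1B$ is quasi-isomorphic to that cofiber. The map $Be_1\otimes^L_{e_1Be_1}e_1B\to Be_1B$ should be a quasi-isomorphism; equivalently, we compare cohomology, since both $C$ and $M$ have $H^*=k\oplus k[-1]$ and the natural map $M\to C$ is an isomorphism in degree $0$ and on $\tau$. Lemma~\ref{lem: general noncogeneration}(2) then gives $\Hom_B(C,B)\simeq\Hom_A(M,A)=0$.

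As a direct alternative, adapt the proof of Lemma~\ref{lem: general noncogeneration}(2): show that $C\otimes^L_B Be_1=0$, i.e.\ that $C$ is annihilated by $e_1$ derivedly. This follows from the previous identification, and then $\Hom_B(C,B)$ vanishes once $B$ lies in the localizing subcategory cogenerated by the $e_1$-part.
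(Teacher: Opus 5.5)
Your outline of the structure of $B$ and of the last step (identifying $C=B/Be_1B$ with $M$ via the classes of $e_2$ and $\tau$, then applying Lemma~\ref{lem: general noncogeneration}(2)) follows the paper. The heart of the theorem, however, is the quasi-isomorphism $B\simeq A$, and you do not supply it. Worse, the direct route you propose cannot work.

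Suppose $\Psi\colon B\to\End_\Lambda(\Lambda\oplus F)$ is a dg algebra map sending $e_1$ to the projection onto $\Lambda$. It restricts to a unital dg algebra map $\phi\colon e_1Be_1\to\End_\Lambda(\Lambda)=\Lambda$, and $\Psi$ can only be a quasi-isomorphism if $\phi$ is one. The relations give $(\beta\gamma)^2=(\beta\gamma)(\alpha\gamma)=0$ and $(\alpha\gamma)(\beta\gamma)=qh$, where $h=\alpha\delta\alpha\gamma$ spans $H^4(e_1Be_1)$. So $a=\phi(\alpha\gamma)$ and $b=\phi(\beta\gamma)$ would need $ab\neq0$. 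But in $\Lambda$, since $q\neq-1$, $b^2=0$ forces $b\in kx\cup ky$. Then $ba=0$ forces $a$ onto the same line, so $ab=0$. No choice of signs or homotopies rescues the direct map.

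Your fallback ``intermediate subalgebra'' is left unspecified; whatever it is, at vertex $1$ the map must go from $\Lambda$ into $e_1Be_1$, not the other way. Three smaller points:
\begin{enumerate}
\item Matching the corner dimensions $4,2,2,3$ does not show a map is a quasi-isomorphism; you must check that every cohomology class is hit.
\item $\tau$ corresponds to $v_1$, of degree $1$, not to $u_1$, of degree $-1$.
\item To get the basis of $B$ you must add the consequences $\delta\alpha\delta=\delta\alpha\tau=0$ to the reduction system and check confluence. Naive path-listing with the given relations leaves $5$ paths in degree $3$.
\end{enumerate}

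The missing idea is to let multiplicativity come for free from the left action of $B$ on its own column. The paper embeds $\Lambda$ in $e_1Be_1$ as the span of $e_1$, $x=\lambda\alpha\gamma+q(\beta\gamma+\alpha\delta)$, $y=\beta\gamma+\alpha\delta$ and $xy$, with $\lambda=-q(q+1)/(q-1)$ chosen so that $x^2=0$; this inclusion is a quasi-isomorphism. Then $S=Be_1$ is a $(B,\Lambda)$-bimodule. The inclusion $\Lambda\oplus V\to S$, with $u=\gamma$ and $v=q\delta\alpha\gamma$, is a quasi-isomorphism of right $\Lambda$-modules; its cokernel is the contractible pair $\delta\mapsto\tau\gamma$. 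Left multiplication then gives a morphism of dg algebras $B\to\End_\Lambda(S)\simeq A$ with no relations to verify. It remains to check, corner by corner, that every cohomology class is hit. For $\tau$ this uses the homotopy sending the first generator of $F$ to $\delta$; correcting $L_\tau$ by it shows $\tau\mapsto(\lambda/q)v_1$.

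Once you have $B\simeq\End_\Lambda(Be_1)$, your ``direct alternative'' for the last step becomes rigorous, and shorter than the paper's:
\[
\Hom_B(C,B)\simeq\Hom_\Lambda(C\otimes_B Be_1,Be_1)=\Hom_\Lambda(Ce_1,Be_1)=0,
\]
because $Be_1$ is a projective left $B$-module and $Ce_1=0$. The condition actually needed is this double-centralizer equivalence, not that $B$ lies in a subcategory ``cogenerated by the $e_1$-part''.
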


\subsection{The presentation and its cohomology}

The defining relations imply
\begin{equation}\label{eq:fd17-consequences}
\delta\alpha\delta = \delta\alpha\tau = \tau\gamma\alpha = \tau\gamma\beta = 0.
\end{equation}
Indeed, the first two follow from $\gamma\beta\delta = \gamma\beta\tau = 0$ and $\gamma\beta = q\delta\alpha $; the last two follow by substituting for $\gamma\alpha $ and $\gamma\beta $ and using $\tau\delta = 0$.  These identities show that the differential preserves the ideal of relations; for example
\[
d(\delta\beta) = \tau\gamma\beta - \delta\alpha\tau = 0, \qquad d(\gamma\beta - q\delta\alpha) = -\gamma\alpha\tau - q\tau\gamma\alpha = 0,
\]
where $\gamma\alpha\tau = (q-1)\delta\alpha\tau = 0$, and the remaining relations are checked in the same way.  Also $d^2 = 0$ on the arrows, so $B$ is a dg algebra.

We compute a basis of $B$ by reducing paths using the relations.
Orient the two non-monomial relations as
\[
\gamma\beta \longmapsto q\delta\alpha,
\qquad
\gamma\alpha \longmapsto (q-1)\delta\alpha,
\]
and reduce any path containing one of
\[
\beta\delta,\quad \delta\beta,\quad \beta\tau,\quad
\tau\delta,\quad \tau^2,\quad
\delta\alpha\delta,\quad \delta\alpha\tau
\]
to zero. Each nonzero reduction decreases the number of occurrences
of $\gamma$, so the reduction process terminates.

The only
ambiguities involving a nonzero reduction occur in
$\gamma\beta\delta$ and $\gamma\beta\tau$. Both paths reduce to zero regardless of which rule is applied first.

Induction on the number of occurrences of $\gamma$ shows that
each path has a unique reduced expression. Consequently, the
irreducible paths form a homogeneous $k$-basis of $B$, given
by the following table:
\begin{equation}\label{eq:fd17-basis}
\begin{array}{c|l}
\text{degree} & \text{basis} \\ \hline
0 & e_1, e_2 \\
1 & \alpha, \beta, \gamma, \delta, \tau \\
2 & \alpha\gamma, \beta\gamma, \alpha\delta, \delta\alpha, \alpha\tau, \tau\gamma \\
3 & \alpha\delta\alpha, \alpha\tau\gamma, \delta\alpha\gamma \\
4 & \alpha\delta\alpha\gamma .
\end{array}
\end{equation}
In particular every path of length five is zero.

The nonzero differentials of basis elements are
\[
d\beta = \alpha\tau, \qquad d\delta = \tau\gamma, \qquad d(\beta\gamma) = \alpha\tau\gamma, \qquad d(\alpha\delta) = -\alpha\tau\gamma,
\]
so the cohomology of $B$ has representatives
\begin{equation}\label{eq:fd17-cohomology}
\begin{aligned}
H^0(B) &= \langle e_1, e_2 \rangle, \\
H^1(B) &= \langle \alpha, \gamma, \tau \rangle, \\
H^2(B) &= \langle \alpha\gamma, \beta\gamma + \alpha\delta, \delta\alpha \rangle, \\
H^3(B) &= \langle \alpha\delta\alpha, \delta\alpha\gamma \rangle, \\
H^4(B) &= \langle \alpha\delta\alpha\gamma \rangle .
\end{aligned}
\end{equation}
Here and below a closed element also denotes its cohomology class.

\subsection{Comparison with $A$}

Put
\[
\lambda = -\frac{q(q+1)}{q-1}, \qquad x = \lambda \alpha\gamma + q(\beta\gamma + \alpha\delta), \qquad y = \beta\gamma + \alpha\delta, \qquad h = \alpha\delta\alpha\gamma.
\]
The elements $x, y$ are closed and lie in $e_1Be_1$, and the identities
\[
(\alpha\gamma)^2 = (q-1)h, \qquad (\alpha\gamma)y = qh, \qquad y(\alpha\gamma) = h, \qquad y^2 = 0
\]
give
\begin{equation}\label{eq:fd17-corner}
x^2 = y^2 = 0, \qquad xy = qyx = \lambda q h, \qquad x - qy = \lambda \alpha\gamma.
\end{equation}
We therefore identify $\Lambda$ with the dg subalgebra $\langle e_1, x, y, xy \rangle \subset e_1Be_1$.  The inclusion is a quasi-isomorphism: $e_1Be_1$ has basis $e_1, \alpha\gamma, \beta\gamma, \alpha\delta, \alpha\tau\gamma, \alpha\delta\alpha\gamma $, and its only nonzero differentials are $d(\beta\gamma) = \alpha\tau\gamma $ and $d(\alpha\delta) = -\alpha\tau\gamma $.

Regard $S = Be_1$ as a right dg $\Lambda$-module, so that $e_1S = e_1Be_1 \simeq \Lambda$ and
\[
e_2S = e_2Be_1 = \langle \gamma, \delta, \tau\gamma, \delta\alpha\gamma \rangle, \qquad d\delta = \tau\gamma.
\]
The elements $u = \gamma $ and $v = q\delta\alpha\gamma $ span a dg submodule with zero differential, and
\[
|u| = 1, \quad |v| = 3, \qquad ux = -v, \quad uy = v, \quad vx = vy = 0.
\]
This is $V$, and its inclusion in $e_2S$ is a quasi-isomorphism, the quotient being the contractible complex $\delta \mapsto \tau\gamma $.  Together with the inclusion of $\Lambda$, we obtain a quasi-isomorphism of right dg $\Lambda$-modules
\begin{equation}\label{eq:fd17-object-comparison}
\Lambda \oplus V \to S.
\end{equation}

Next we show that left multiplication identifies $B$ with $\End_\Lambda(S) \simeq \End_\Lambda(\Lambda \oplus V) = A$.  We use the semi-free resolution \eqref{eq: resolution of V} of $V$, whose generators we now call $f_n$, as $e_1, e_2$ are idempotents:
\begin{equation}\label{eq:fd17-resolution}
F = \bigoplus_{n \ge 0} f_n \Lambda, \qquad |f_n| = n+1, \qquad df_0 = 0, \qquad df_{n+1} = f_n\big(x + (-q)^{-n}y\big),
\end{equation}
with $F \to V$ given by $f_0 \mapsto u$ and $f_n \mapsto 0$ for $n \ge 1$.  Recall from Section~\ref{sec: example dga} the resulting computations: $V^* \simeq \Hom_\Lambda(F, \Lambda)$ has cohomology $kw \oplus kz$, in degrees $1$ and $3$, where
\[
w(u) = x - qy, \quad w(v) = xy, \qquad z(u) = xy, \quad z(v) = 0,
\]
and $\End_\Lambda(V) \simeq \Hom_\Lambda(F, V)$ has cohomology $ku_0 \oplus kv_1 \oplus kv_0$, in degrees $0, 1, 2$, where $u_n(f_n) = u$, $v_n(f_n) = v$, and $u_n, v_n$ vanish on the other generators.

Let $\pi_0 : \Lambda \oplus F \to S$ be the composite of $F \to V$ with \eqref{eq:fd17-object-comparison}.  The left action of $B$ on $S$ gives a map of complexes
\begin{equation}\label{eq:fd17-action-comparison}
B \to \Hom_\Lambda(\Lambda \oplus F, S), \qquad c \mapsto L_c \pi_0,
\end{equation}
where $L_c$ denotes left multiplication by $c$.  On cohomology, its four corners are as follows:
\[
\begin{array}{c|c|c}
\text{corner} & \text{representatives in } B & \text{images} \\ \hline
e_1Be_1 & e_1, x, y, xy & 1, x, y, xy \\
e_2Be_1 & \gamma, \delta\alpha\gamma & u, q^{-1}v \\
e_1Be_2 & \alpha, \alpha\delta\alpha & \lambda^{-1}w, (\lambda q)^{-1}z \\
e_2Be_2 & e_2, \tau, \delta\alpha & u_0, (\lambda/q)v_1, q^{-1}v_0 .
\end{array}
\]
The first three rows and the images of $e_2$ and $\delta\alpha $ follow directly from \eqref{eq:fd17-corner}.  For the image of $\tau $, let $\pi_F : F \to e_2S$ be the restriction of $\pi_0$, and define a degree zero map $\eta : F \to e_2S$ by $\eta(f_0) = \delta $ and $\eta(f_n) = 0$ for $n \ge 1$.  Since $\delta x = \lambda \delta\alpha\gamma $ and $\delta y = 0$,
\[
(d\eta)(f_0) = \tau\gamma, \qquad (d\eta)(f_1) = -\lambda \delta\alpha\gamma,
\]
and all other values vanish.  Thus $L_\tau\pi_F - d\eta$ factors through $V \subset e_2S$ and sends $f_1$ to $\lambda \delta\alpha\gamma = (\lambda/q)v$, which gives the last entry of the table.  All displayed scalars are nonzero, so \eqref{eq:fd17-action-comparison} is a quasi-isomorphism.
  
The left $B$-action on $S$ induces a morphism
$B\to\End_\Lambda(S)$ in the homotopy category of dg algebras,
represented on underlying complexes by
\eqref{eq:fd17-action-comparison}.
The preceding computation shows that this morphism is a
quasi-isomorphism. Together with
\eqref{eq:fd17-object-comparison}, this gives
\[
B\simeq\End_\Lambda(S)
 \simeq\End_\Lambda(\Lambda\oplus V)=A.
\]
Under this identification, $e_1$ corresponds to the projection
onto $\Lambda$, since $e_1S\simeq\Lambda$ and $e_2S\simeq V$.

\subsection{The two dimensional witness}

Under the identification of $B$ with $A$, the idempotent $e_1$ corresponds to $e$, so the module $M$ of Lemma~\ref{lem: general noncogeneration} becomes
\[
M = \cofib\left( Be_1 \otimes_{e_1Be_1} e_1B \to B \right) \in \Mod_B.
\]
By Lemma~\ref{lem: general noncogeneration}, $\Hom_B(M, B) = 0$ and $M \simeq \cofib(\mathrm{ev} : V \otimes_\Lambda V^* \to \End_\Lambda(V))$, the latter living in the $e_2$-$e_2$ corner.  By \eqref{eq: V tensor V dual} and the computation following it, $H^*(V \otimes_\Lambda V^*) = k[-2]$ and $\mathrm{ev}$ is injective on it, with image spanned by $v_0$.  Hence $H^*(M) = k \oplus k[-1]$, with representatives $u_0$ and $v_1$, that is, by the table above, the images of $e_2$ and $\tau $.

The canonical map
\[
M \to C = B/Be_1B = k[\tau]/(\tau^2)
\]
sends these representatives to the basis $e_2, \tau $ of $C$ and is therefore a quasi-isomorphism.  Consequently $\Hom_B(C, B) = \Hom_B(M, B) = 0$, which proves the theorem.

Explicitly, $C$ has basis $m_0 = e_2$, $m_1 = \tau $ in degrees $0, 1$, zero differential, and right $B$-action
\[
m_i e_2 = m_i, \qquad m_i e_1 = 0, \qquad m_0 \tau = m_1, \qquad m_1 \tau = 0,
\]
with $\alpha, \beta, \gamma, \delta $ acting by zero.

\section{Cogeneration and the chord conjecture} \label{sec: chord}

Arnol'd's chord conjecture \cite{Arnold}  asserts that any compact Legendrian in any compact contact manifold has a Reeb chord. 
This conjecture is known for boundaries of Weinstein domains which are subcritical \cite{Mohnke} or more generally have vanishing symplectic cohomology \cite{Zhou-minimal}; it is also known for all three dimensional contact manifolds \cite{Hutchings-Taubes}.  There has been a spate of recent  progress \cite{Brocic-Cant-Shelukhin, Guo-Zhou, Brocic-Cant, shelukhin-orderability}, but the conjecture remains wide open in general.  

Some recent developments in categorical symplectic topology 
can be combined to the following observation: 


\begin{theorem} \label{thm: no reeb gives ff lcy}
    Let $W$ be a Weinstein manifold, and $L \subset \partial_\infty W$ a Legendrian.  If $L$ admits no Reeb chords, then the \cite{GPS1}  functor $\iota: \Fuk(T^*L) \to \Fuk(W, L)$ is a fully faithful left Calabi-Yau morphism with nonzero domain. 
\end{theorem}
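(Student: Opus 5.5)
The plan is to assemble three inputs from the literature: the Ganatra--Pardon--Shende stop removal and Legendrian cocore description of the partially wrapped category $\Fuk(W,L)$; the left Calabi--Yau structure on the functor $\iota$; and a wrapping computation that uses the absence of Reeb chords.

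\textbf{Step 1: the morphism $\iota$.} By \cite{GPS1}, the stop $L \subset \partial_\infty W$ has a small ribbon neighborhood, modelled on a neighborhood of the zero section in $J^1 L$. This gives a Viterbo-type (Orlov) functor $\iota: \Fuk(T^*L) \to \Fuk(W,L)$. It sends the cotangent fiber at a point of $L$ to the linking disk $D_p$ of the stop. Stop removal identifies $\Fuk(W) \simeq \Fuk(W,L)/\iota(\Fuk(T^*L))$. Since $T^*L$ and $(W,L)$ are Weinstein pairs, both categories are smooth. I will then invoke the known relative left Calabi--Yau structure on this cap functor: the boundary of $(W,L)$ is modelled on the link of the stop, and the structure comes from the Bott--Brav--Dyckerhoff style argument via the cyclic open--closed map, as in Ganatra's and Legout's work on relative Calabi--Yau structures for partially wrapped categories. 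I take this as the input and do not reprove it. It has dimension $n = \dim_\bC W$, and its nondegeneracy is exactly the statement that the vertical maps in \eqref{eq: left CY nondegeneracy} are isomorphisms.

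\textbf{Step 2: full faithfulness.} $\Fuk(T^*L)$ is generated by a cotangent fiber on each component of $L$, so it suffices to compare $\End(T^*_pL)\simeq C_{-*}(\Omega_p L)$ with $\Hom_{\Fuk(W,L)}(D_p, D_{p'})$. For the latter I would use the wrapping exact triangle of \cite{GPS1}, or more concretely a Reeb-filtration computation. The wrapped Floer complex between two linking disks, wrapped positively in $\partial_\infty W$ but stopped at $L$, has a filtration by action. The low-action part comes from chords that stay inside the ribbon neighborhood, and there the local model reproduces exactly $\Hom_{T^*L}$. Every other generator corresponds to a Reeb chord of $\partial_\infty W$ that leaves the neighborhood and returns to $L$. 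The linking disks wrap to push-offs converging to $L$, so such chords are Reeb chords of $L$. If there are none, the higher filtration pieces vanish and $\iota$ is fully faithful on generators, hence fully faithful.

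\textbf{Step 3: nonzero domain.} $\Fuk(T^*L)$ contains the cotangent fiber, whose endomorphisms $C_{-*}(\Omega L)$ have $H^0\neq 0$. So the domain is nonzero. Fully faithful means the image is nonzero as well.

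The main obstacle is Step 2. The wrapping of linking disks happens in the complement of the stop, so one must check that the cofinal wrapping can be chosen so that all non-local intersection points correspond to genuine Reeb chords of $L$ of positive length. Degenerate or arbitrarily short chords must be excluded by a genericity or compactness argument using the compactness of $L$. My first attempt would follow the GPS stop-removal proof: write $\Hom(D_p,D_{p'})$ as a colimit over positive wrappings and identify the cone of the local inclusion with a Floer complex generated by chords from $L$ to itself. If that identification is delicate, the fallback is to use the Legendrian surgery (Chekanov--Eliashberg dga) description of $\Fuk(W,L)$. There, with no chords, the dga of $L$ reduces to $C_{-*}(\Omega L)$, and full faithfulness follows from the surgery formula.
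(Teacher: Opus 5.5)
Your outline has the same shape as the paper's sketch: full faithfulness from the absence of chords, a nonzero domain, and the Calabi--Yau structure imported from the literature. Your Step 2 fallback is exactly the paper's argument. By the surgery description of \cite{BEE, Ekholm-Lekili}, $\End_{\Fuk(W,L)}(D_p)$ is the Chekanov--Eliashberg dga of $L$ with $C_{-*}(\Omega L)$ coefficients, and $\iota$ induces the inclusion of its chord-free part $C_{-*}(\Omega L)$. This inclusion is an isomorphism when $L$ has no chords.

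Your primary action-filtration argument, however, does not work as written. The linking disks have a fixed size, while the cofinal wrapping has unbounded length. By compactness, the absence of chords of $L$ only excludes Reeb trajectories of length between a fixed $\eta>0$ and $T$ that start and end in some neighbourhood $U_T$ of $L$, and $U_T$ shrinks as $T\to\infty$. The Reeb flow preserves a volume form on the compact $\partial_\infty W$, so by Poincar\'e recurrence trajectories from any fixed neighbourhood of $L$ return to it at arbitrarily large times. So your assertion that the wrapped linking spheres become push-offs converging to $L$, and hence that non-local generators come from chords of $L$, is not justified for any fixed choice of linking disks. Nothing forces the higher filtration pieces to vanish at chain level. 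Repairing this would mean shrinking the linking spheres along the wrapping while controlling the continuation maps, which is the analytic content of the surgery formula. You should make the fallback the main argument.

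In Step 1, the input you cite is not available. As the paper's footnote says, a relative version of the cyclic open--closed construction of \cite{ganatra-thesis, ganatra-cyclic} or \cite{legout} for $\iota$ is only expected. The paper instead takes the relative left Calabi--Yau structure from the Calabi--Yau structures on microlocal sheaf categories of \cite{shende-takeda}. It then transports this structure to $\iota:\Fuk(T^*L)\to\Fuk(W,L)$ through the \cite{GPS3} equivalence between partially wrapped Fukaya categories and microlocal sheaves on the relative skeleton. That transport uses that $W$ is polarizable, a hypothesis experts know how to remove, e.g.\ by running \cite[Sec. 11]{nadler-shende} in the Fukaya category. With these two substitutions your Steps 1--3 coincide with the paper's proof.
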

\begin{proof}[Sketch of proof]
It follows from a geometric construction \cite{BEE, Ekholm-Lekili} that if $L$ has no Reeb chords, then $\iota$ is fully faithful.  The category $\Fuk(T^*L)$ is nonzero. 
On the other hand, it follows from \cite{shende-takeda} translated through \cite{GPS3} that the morphism $\iota$ is left Calabi-Yau.\footnote{Strictly speaking, the use of \cite{GPS3} means we should assume $W$ is polarizable.  Experts know how to remove this hypothesis (e.g. one runs \cite[Sec. 11]{nadler-shende} in the Fukaya category); or alternatively, it is expected that one can run a `relative' version of \cite{ganatra-thesis, ganatra-cyclic} or \cite{legout} to construct the Calabi-Yau structure directly, without passage through the sheaf/Fukaya dictionary.} 
\end{proof}

\begin{remark}
To show that Theorem \ref{thm: no reeb gives ff lcy} has content, we rederive Zhou's generalization \cite{Zhou-minimal} of Mohnke's theorem \cite{Mohnke}: the contact boundary of a Weinstein domain with vanishing symplectic homology satisfies the Chord conjecture.  
Indeed, if $SH(W) = HH^*(\Fuk(W)) = 0$, then $\Fuk(W) = 0$.  For a Legendrian $L$, by \cite{GPS2}, one has $\Fuk(W) \cong \Fuk(W, L) / \iota(\Fuk(T^*L))$.  If $L$ has no chords, then by Theorem \ref{thm: no reeb gives ff lcy}, $\iota$ is fully faithful, so by Proposition \ref{universal ff lcy}, the inverse  dualizing bimodule of $\Fuk(W, L)$ vanishes on the split-generating subcategory $\iota(\Fuk(T^*L))$, hence vanishes.  Taking bimodule dual, the diagonal bimodule must also vanish, and hence $\Fuk(W, L) = 0$.  But $\Fuk(W, L)$ contains the nonzero subcategory $\Fuk(T^*L)$; contradiction.
\end{remark}

Let $W$ be a Weinstein manifold and $L \subset \partial_\infty W$ a compact Legendrian.  Suppose that $L$ can be enlarged to a (singular) `total stop'\footnote{We expect this to follow from a relative version of the Giroux-Pardon construction, giving a Lefschetz fibration for $W$ such that $L$ is a component of the core of the fiber. In any case, we can also obtain such a localization sequence after passing to microsheaves, as described below.} $\Lambda \supset L$, so that $\Fuk(W, \Lambda)$ is smooth and proper.  Then per \cite{GPS2}, we have a localization sequence
\begin{equation} \label{eq: quotient by linking disks}
0 \to \langle D_i \rangle \to \Fuk(W, \Lambda) \to \Fuk(W, L) \to 0
\end{equation}
where the $D_i$ are the `linking disks' to $\Lambda \setminus L$; finitely many of them suffice.  Under the dictionary of \cite{GPS3}, the linking disks are the corepresentatives of the microstalk functors.  Since $\Fuk(W, \Lambda)$ is proper,
\[
A_{\Lambda, L} := \End\Big( \bigoplus_i D_i \Big)
\]
is a proper dg algebra.

\begin{corollary} \label{prop: cogeneration implies chord}
    If $A_{\Lambda, L}$ cogenerates, then $L$ has a Reeb chord.
\end{corollary}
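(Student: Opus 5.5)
We argue by contradiction: assume $A_{\Lambda, L}$ cogenerates but $L$ has no Reeb chords. The plan is to play Proposition \ref{cogeneration versus nondegeneracy} (nondegeneracy of $\cS^!$) against Theorem \ref{thm: no reeb gives ff lcy} and Proposition \ref{universal ff lcy} (which force $\cS^!$ to be degenerate).

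First, apply Proposition \ref{cogeneration versus nondegeneracy} to the localization sequence \eqref{eq: quotient by linking disks}, with $\cC = \Fuk(W,\Lambda)$, which is smooth and proper by the choice of total stop, $\cK = \langle D_i\rangle$, and $\cS = \Fuk(W,L)$. Since finitely many $D_i$ generate $\cK$, we have $\Mod_\cK \simeq \Mod_{A_{\Lambda,L}}$ with $A_{\Lambda,L}$ corresponding to the compact generator $\bigoplus_i D_i$. By the remark following Proposition \ref{cogeneration versus nondegeneracy}, compact cogeneration of $\Mod_\cK$ is then exactly the statement that $A_{\Lambda,L}$ cogenerates. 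That proposition therefore gives: if $\cS^!(M,\cdot)=0$, equivalently $\Phi_\cS(M)=0$, then $M=0$.

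Second, if $L$ has no Reeb chords, Theorem \ref{thm: no reeb gives ff lcy} gives a fully faithful left Calabi--Yau morphism $\iota:\Fuk(T^*L)\to\Fuk(W,L)$ with nonzero domain. Choose a nonzero object $a\in\Fuk(T^*L)$, for instance a cotangent fiber. Full faithfulness gives $\End(\iota(a))\simeq\End(a)\neq 0$, so $\iota(a)\neq 0$ and hence $h_{\iota(a)}\neq 0$ in $\Mod_\cS$. Lemma \ref{lem: kernel of IDB}, the first half of Proposition \ref{universal ff lcy}, says $\Phi_\cS(h_{\iota(a)}) = \cS^!(\iota(a),-)=0$. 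This contradicts the nondegeneracy obtained in the first step, so $L$ must have a Reeb chord.

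The only real content is checking that the hypotheses line up. One point is that the localization in \eqref{eq: quotient by linking disks} is a genuine dg localization sequence of the form required in Proposition \ref{cogeneration versus nondegeneracy}, up to idempotent completion, which does not affect $\Mod$. The other is that the left Calabi--Yau structure from Theorem \ref{thm: no reeb gives ff lcy} is a structure on a functor between smooth categories, as Lemma \ref{lem: kernel of IDB} requires; smoothness of $\Fuk(W,L)$ follows from it being a localization of the smooth category $\Fuk(W,\Lambda)$. I expect the main obstacle to be this bookkeeping of hypotheses, in particular the existence of the total stop and the polarization caveat. These are inherited from the preceding discussion and from the sketch of Theorem \ref{thm: no reeb gives ff lcy}, rather than being new mathematics.
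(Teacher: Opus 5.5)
Your proposal is correct and follows the paper's proof, which consists only of citing Proposition \ref{cogeneration versus nondegeneracy}, Proposition \ref{universal ff lcy} and Theorem \ref{thm: no reeb gives ff lcy}. You spell out how they combine: cogeneration by $A_{\Lambda,L}$ gives compact cogeneration of $\Mod_\cK$, hence nondegeneracy of $\Fuk(W,L)^!$; this is contradicted by the nonzero module $h_{\iota(a)}$, which $\Phi_\cS$ kills by Lemma \ref{lem: kernel of IDB}.
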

\begin{proof}
    Follows from Proposition \ref{cogeneration versus nondegeneracy}, Proposition \ref{universal ff lcy}, and Theorem \ref{thm: no reeb gives ff lcy}.
\end{proof}

We now explain why we were specifically interested in {\em coconnective} dg algebras.  

A sequence as in \eqref{eq: quotient by linking disks} can be obtained from microlocal sheaf theory, as follows.  Let $(W, L)$ be a stopped Weinstein manifold, with relative skeleton $\mathfrak c_{W, L}$.  By \cite[Theorem 1.4]{GPS3}, $\Fuk(W, L)$ is equivalent, up to passing to the opposite category, to the category $\mu sh(\mathfrak c_{W, L})^c$ of compact objects in microlocal sheaves on $\mathfrak c_{W, L}$.  Embed $W$ as a Liouville hypersurface in a cosphere bundle $S^*M$.  The doubling trick \cite[Theorem 7.17]{GPS3} (see \cite[Theorem 7.30]{nadler-shende}) identifies $\mu sh(\mathfrak c_{W, L})$ with the category of sheaves on $M \times \bR$ which are microsupported in the double of $\mathfrak c_{W, L} \times (0, 1)$ and have vanishing stalk at infinity.  Now let $X$ be a closed manifold containing the support of these sheaves, and $\mathcal T$ a Whitney triangulation of $X$ such that the union $T^*_{\mathcal T} X$ of the conormals to the strata (note that it contains the zero section) contains the double.  Then $\Sh(X, T^*_{\mathcal T} X)^c$, which is equivalent to perfect modules over the poset $\mathcal T$, is smooth and proper \cite[Proposition 4.25]{GPS3}, and $\mu sh(\mathfrak c_{W, L})^c$ is its quotient by corepresentatives of microstalks at smooth points of $T^*_{\mathcal T} X$ \cite[Corollary 4.23]{GPS3}:
\[
0 \to \langle \chi_{\xi_i} \rangle \to \Sh(X, T^*_{\mathcal T} X)^c \to \mu sh(\mathfrak c_{W, L})^c \to 0,
\]
where $\chi_\xi$ denotes the corepresentative of the microstalk functor $\mu_\xi$.

\begin{definition}
    We say a Whitney triangulation $\mathcal T$ is \emph{good}, if it satisfies
    \begin{enumerate}
        \item For each $x \in \sigma \in \mathcal T$, identify the neighborhood of $x$ with a small open ball in $T_x X$ and take a subspace $V_{\sigma, x}$ complementary to $T_x \sigma$.  Then $\mathcal T$ restricted to $V_{\sigma, x}$ is locally analytically diffeomorphic to a fan $\Sigma_{\sigma, x}$.
        \item Each cone in $\Sigma_{\sigma, x}$ is convex.
    \end{enumerate}
\end{definition}

\begin{remark}
    While we do not necessarily expect that any $L$ can be isotoped to be a subset of some $T^*_\mathcal{T}X$ for good $\mathcal{T}$, we do expect that a variant of Nadler's noncharacteristic expansion \cite[Sections 5--6]{Nadler-expansion} can be used to show that $\mu sh(\mathfrak c_{W, L})$ can always be realized as a quotient of some $\Sh(X, T^*_{\mathcal T} X)^c$ for good $\mathcal T$.
\end{remark}



Let us recall some notation from \cite{KS90}.  Let $S_1, S_2$ be two subsets of $X$, and $x \in X$.  Then $C_x(S_1, S_2)$ is the conic subset of $T_x X$ defined by: $v \in C_x(S_1, S_2)$ if and only if there exists a sequence $\{ (x_n, y_n, c_n) \}$ in $S_1 \times S_2 \times \bR^+$ such that
\[
x_n \rightarrow x, \quad y_n \rightarrow x, \quad c_n (x_n - y_n) \rightarrow v.
\]
Denote $C_x(S) := C_x(S, x)$.  For $x \in X$ and a subset $S \subset X$, define
\[
N_x(S) = T_x X \setminus C_x(X \setminus S, S), \qquad N^*_x(S) = N_x(S)^\circ.
\]
Here for a conic set $\gamma$, we denote $\gamma^\circ = \{ v \mid \forall u \in \gamma, \, \langle v, u \rangle \ge 0 \}$.  Note that for a convex cone $\gamma$ in a vector space $V$, we have $N_0(\gamma) = C_0(\gamma)$.

\begin{proposition}[{\cite[Proposition 5.3.8]{KS90}}] \label{prop:SS_of_open_closed}
    For an open subset $U \subset X$ and a closed subset $Z \subset X$, we have
    \[
    \SS(k_U) \subset \bigcup_{x \in \overline U} -N_x^*(U), \qquad \SS(k_Z) \subset \bigcup_{x \in Z} N_x^*(Z).
    \]
\end{proposition}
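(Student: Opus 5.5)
The statement is Kashiwara--Schapira's Proposition 5.3.8, which bounds the microsupport of the constant sheaf on an open or closed subset. I would prove it as in \cite{KS90}, working through the definition of microsupport via propagation.

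Take the closed case first. Fix $x_0 \in X$ and $\xi_0 \notin N^*_{x_0}(Z)$. We must show that $(x_0,\xi_0) \notin \SS(k_Z)$. If $x_0 \notin Z$ this is immediate, since $k_Z$ vanishes near $x_0$. So assume $x_0 \in Z$ and work in a chart. Since $\xi_0 \notin N_{x_0}(Z)^\circ$, there is $v \in N_{x_0}(Z)$ with $\langle \xi_0, v\rangle < 0$.

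Because $N_{x_0}(Z)$ is open, we can pick a closed convex proper cone $\gamma \subset N_{x_0}(Z)\cup\{0\}$ with $v$ in its interior and $\xi_0 \in \mathrm{Int}(-\gamma^{\circ})$ (after a sign convention). By the definition of $N_x$ through $C_x(X\setminus Z, Z)$, there is a neighborhood $W$ of $x_0$ such that $(Z \cap W) + (\gamma \cap B_\epsilon) \subset Z$. In words, $Z$ is locally stable under translation by short vectors of $\gamma$. This is exactly the cone condition.

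Now apply the microlocal cut-off criterion \cite[Prop.~5.2.3 / 5.3.? ]{KS90}, i.e. the characterization of $\SS$ by half-spaces. We must show that for every $\psi$ with $d\psi(x_0)=\xi_0$ we have
\[
R\Gamma_{\{\psi\ge \psi(x_0)\}}(k_Z)_{x_0}=0.
\]
Equivalently, the restriction maps $H^*(Z\cap U_t)\to H^*(Z\cap U_s)$ are isomorphisms for a family of $\gamma$-shaped open sets ("$\gamma$-open" half-cones). The cone condition shows that the slices $Z\cap\{\psi<c\}\cap(\text{small $\gamma$-convex neighbourhood})$ deformation retract onto one another along the direction $v$. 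Hence the relative cohomology vanishes.

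The open case then follows by duality. From the triangle $k_U \to k_X \to k_{X\setminus U}$, together with $\SS(k_X)=T^*_XX$, we get $\SS(k_U)\subset T^*_XX\cup \SS(k_{X\setminus U})$. Since $C_x(U, X\setminus U) = -C_x(X\setminus U, U)$, we have $N_x(X\setminus U) = -N_x(U)$, and therefore $N^*_x(X\setminus U)=-N^*_x(U)$. Points outside $\overline U$ contribute nothing. The zero section is contained in $-N^*_x(U)$ for $x\in\overline U$, since $0$ lies in every polar.

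The main obstacle is the geometric step in the closed case. There one must produce, from the openness of $N_{x_0}(Z)$, a uniform cone $\gamma$ and a neighborhood on which $Z$ is $\gamma$-stable, and then turn this into vanishing of local cohomology. I would handle the vanishing with the non-characteristic deformation lemma rather than an explicit retraction.
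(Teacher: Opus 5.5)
The paper gives no proof of this statement; it only cites \cite[Proposition 5.3.8]{KS90}. Your outline has the right architecture, and most of it is correct as written.

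\emph{The open case.} Reducing it to the closed case via $k_U\to k_X\to k_{X\setminus U}$ and $N_x(X\setminus U)=-N_x(U)$ works. This includes your treatment of the zero section and of points outside $\overline U$.

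\emph{Local stability.} The condition $(Z\cap W)+(\gamma\cap B_\epsilon)\subset Z$ is also correct. If it failed, you would get $y_n\in Z$ with $y_n+w_n\notin Z$, where $y_n\to x_0$, $0\neq w_n\in\gamma$ and $w_n\to 0$. Rescaling and using compactness of the unit sphere in $\gamma$ then puts a nonzero vector of $\gamma$ into $C_{x_0}(X\setminus Z,Z)$, a contradiction. In fact the single ray through $v$ already suffices here.

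\emph{A minor point.} The definition of $\SS$ requires $R\Gamma_{\{\psi\ge\psi(x_1)\}}(k_Z)_{x_1}=0$ for all $(x_1,d\psi(x_1))$ in a neighbourhood of $(x_0,\xi_0)$, not only at $x_0$ with $d\psi(x_0)=\xi_0$. This is harmless: $W$, $\epsilon$ and $v$ serve for all nearby $x_1$, and the condition $\langle\xi_1,v\rangle<0$ is open in $\xi_1$.

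The genuine gap is the step you yourself flag as the main obstacle: you assert it but do not prove it, and the tool you propose does not work.

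\emph{Why the proposed tools fail.} The non-characteristic deformation lemma only relocates the problem. Its hypothesis is the vanishing of $(R\Gamma_{X\setminus U_t}(k_Z))_x$ at points of the moving boundary, which is a local cohomology statement for $k_Z$ of exactly the type you are trying to prove. Nor do fixed slices deformation retract onto each other, since translation by $v$ preserves no bounded neighbourhood.

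\emph{What works instead.} Translate at the level of the colimit over shrinking balls. Set $P_r=Z\cap B_r(x_1)$ and $Q_r=P_r\cap\{\psi<\psi(x_1)\}$, so that $R\Gamma(B_r;k_Z)=R\Gamma(P_r;k)$. Near $x_1$ choose constants with $\langle d\psi,v\rangle\le -c<0$ and $|d\psi|\le L$. Define $T(y)=y+\kappa r v$ with $\kappa>L/c$, and put $K=1+\kappa|v|$.
\begin{itemize}
\item For small $r$, $T$ maps $P_r$ into $Q_{Kr}$.
\item The homotopy $y\mapsto y+\tau\kappa r v$, $\tau\in[0,1]$, stays in $P_{Kr}$ on $P_r$ (by local stability) and in $Q_{Kr}$ on $Q_r$ (because $\psi$ decreases along $v$).
\end{itemize}
By homotopy invariance of sheaf cohomology, $T^*$ composed with the restriction $H^*(P_{Kr})\to H^*(Q_{Kr})$ equals the restriction $H^*(P_{Kr})\to H^*(P_r)$. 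In the other order, it equals the restriction $H^*(Q_{Kr})\to H^*(Q_r)$. Hence $\varinjlim_r H^*(P_r)\to\varinjlim_r H^*(Q_r)$ is an isomorphism, which is precisely the required vanishing.

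Alternatively, you can follow \cite{KS90} and invoke their comparison between micro-support and sheaves coming from the $\gamma$-topology. With either version of this step supplied, your argument is complete.
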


We consider generic points $\xi = (x, p) \in T^*_{\mathcal T} X$.  Here \emph{generic} means that if $x \in \sigma \in \mathcal T$, the subspaces spanned by the cones in $\Sigma_{\sigma, x}$ and $\xi^\perp \subset V_{\sigma, x}$ are transverse to each other.  Locally, take a small open ball $B_\epsilon(x)$ around $x$, and identify it with a small ball in $T_x X$.  Then define
\[
B_\xi^+ := \{ v \in B_\epsilon(x) \mid \langle \xi, v \rangle \ge -\epsilon^2 \}.
\]
A cartoon picture for $B_\xi^+$ and $\starp(\xi)$, defined below, is given in Figure~\ref{fig:cartoon_B_starp}.
    Denote the microlocal stalk functor at $\xi$ by $\mu_\xi$.  Then by the definition, for $\cF \in \Sh(X, T^*_{\mathcal T} X)$, we have
    \[
    \mu_\xi(\cF) = \Hom(k_{B_\xi^+}, \cF).
    \]

\begin{figure}
\centering
\begin{tikzpicture}[scale = .8, line width = 1pt]
    \draw[line width = 1pt] (0, 0) -- ++(30: 3);
    \draw[line width = 1pt] (0, 0) -- ++(70: 3);
    \draw[line width = 1pt] (0, 0) -- ++(170: 3);
    \draw (0, 0) -- ++(230:3);
    \draw (0, 0) -- ++ (-80:3);

    \draw[blue] (-10:1.5) -- (190:1.5);
    \draw[blue, dashed] (-10:1.5)
    arc[start angle= -10, end angle=190, radius=1.5];

    \fill[blue, opacity = .3] (-10:1.5)
    arc[start angle=-10, end angle=190, radius=1.5]
    -- cycle;

    \draw[-Stealth, Orange, line width = 1pt] (0,0) -- (0, 1);
    \node[left] at (0, 1) {\textcolor{Orange}{$\xi$}};

    \fill[Green] (0,0) circle (2pt);
    \node at (.3, -.05) {\Large \textcolor{Green}{$\sigma$}};

    \node[right] at (-10:1.5) { \textcolor{blue}{$B_\xi^+$}};
\end{tikzpicture}
\qquad
\begin{tikzpicture}[scale = .7, line width = 1pt]
    \draw[line width = 1pt] (0, 0) -- ++(30: 3);
    \draw[line width = 1pt] (0, 0) -- ++(70: 3);
    \draw[line width = 1pt] (0, 0) -- ++(170: 3);
    \draw (0, 0) -- ++(230:3);
    \draw (0, 0) -- ++ (-80:3);

    \draw[blue, line width = 2pt] (30:4) -- (0,0) -- (170:4);

    \fill[blue, opacity = .3] (30:4)
    arc[start angle=30, end angle=170, radius=4] -- (0, 0) -- cycle;

    \draw[-Stealth, Orange, line width = 1pt] (0,0) -- (0, 1.5);
    \node[anchor = east] at (0, 1.5) {\textcolor{Orange}{$\xi$}};

    \fill[Green] (0,0) circle (2pt);
    \node at (.3, -.1) {\Large \textcolor{Green}{$\sigma$}};

    \node at (-1, 3) { \textcolor{blue}{$\starp(\xi)$}};
\end{tikzpicture}
\caption{A cartoon picture for $B_\xi^+$ and $\starp(\xi)$}
\label{fig:cartoon_B_starp}
\end{figure}

\begin{definition}
    For a point $\xi = (x, p) \in T^*_{\mathcal T} X$, define
    \[
    \starp(\xi) = \bigcup_{\substack{\sigma \in \mathcal T, \, x \in \overline{\sigma} \\ \langle \xi, C_x(\sigma) \rangle \ge 0}} \sigma.
    \]
\end{definition}

For $x \in \sigma \in \mathcal T$, each stratum $\tau$ in $\sta(\sigma)$ gives rise to a cone $\gamma_\tau$ in the fan $\Sigma_{\sigma, x}$ in $V_{\sigma, x}$.  Then $\starp(\xi)$ is the union of the strata $\tau \subset \sta(\sigma)$ satisfying $\langle \gamma_\tau, \xi \rangle \ge 0$.

\begin{proposition} \label{prop:corep_microstalk}
    Let $\mathcal T$ be a good triangulation and $\xi$ generic.  Then there is an equivalence of functors from $\Sh(X, T^*_{\mathcal T} X)^c$ to $\Perf(k)$:
    \[
    \Hom(k_{\starp(\xi)}, -) \cong \Hom(k_{B^+_\xi}, -).
    \]
    In other words, the microlocal stalk functor at $\xi$ is corepresented by $\chi_\xi = k_{\starp(\xi)}$.
\end{proposition}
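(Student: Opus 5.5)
We plan to exhibit a natural map between the two corepresenting objects and show its cone is annihilated by $\Hom(-,\cF)$ for every $\cF \in \Sh(X, T^*_{\mathcal T} X)$. Concretely, $B^+_\xi \cap \starp(\xi)$ contains a neighborhood of $x$ in $\starp(\xi)$, and we compare both objects with the intermediate sheaf $k_{\starp(\xi) \cap B_\epsilon(x)}$ (or with $k_{B^+_\xi \cap \sta(\sigma)}$, where $x \in \sigma$). This gives a zigzag of restriction and extension-by-zero maps, and the claim reduces to two local statements.

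The first is a \emph{non-characteristic shrinking} statement. Because $\cF$ is constructible with respect to $\mathcal T$, the value $\Hom(k_{\starp(\xi)}, \cF) = \Gamma(\starp(\xi); \cF)$ depends only on the stalk data along $\starp(\xi)$, and it is computed by the poset: $\starp(\xi)$ is the upward-closed set of strata $\tau \ge \sigma$ with $\langle \gamma_\tau, \xi\rangle \ge 0$. Since $\Sh(X, T^*_{\mathcal T}X)^c \simeq \Perf$ of the poset, sections over such a set are computed by the star of $\sigma$ cut by the half-space condition. We intend to identify this with $\Gamma(\starp(\xi) \cap B_\epsilon(x); \cF)$ by contracting along the fan structure. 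Goodness (condition 1) gives the local conic model $\sigma \times \Sigma_{\sigma,x}$, and convexity of the cones (condition 2) makes each stratum in the star deformation retract toward $\sigma$ compatibly with the stratification.

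The second is a \emph{local comparison}, carried out inside $B_\epsilon(x)$ where everything is conic. There we must show that $\Gamma(B^+_\xi; \cF) \to \Gamma(B^+_\xi \cap \{\langle \xi, v\rangle \ge 0\}\text{-strata}; \cF)$ is an isomorphism; in other words, that the region $\{-\epsilon^2 \le \langle \xi, v\rangle < 0\}$ in the cones with $\langle\gamma_\tau,\xi\rangle\not\ge 0$ contributes nothing. We plan to do this with the microlocal Morse lemma (KS), using the function $-\langle \xi, v\rangle$ on the closed sets, whose singular support is controlled by Proposition \ref{prop:SS_of_open_closed}. Genericity of $\xi$ (transversality of $\xi^\perp$ with the spans of the cones) ensures that the family of closed sets $\{\langle\xi, v\rangle \ge -s\} \cap \starp$-type regions, for $s \in [0,\epsilon^2]$, has boundary covectors $N^*_x$ never meeting $\SS(\cF) \subset T^*_{\mathcal T}X$ except at the zero section. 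The sets are closed, so Proposition \ref{prop:SS_of_open_closed} gives $\SS(k_Z) \subset \bigcup N^*_x(Z)$. At corners where $\xi^\perp$ cuts a cone, convexity gives $N^*_x$ as a sum of conormals, and transversality prevents cancellation into a conormal of a stratum.

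We expect the main obstacle to be this singular-support estimate at boundary points where the hyperplane $\langle \xi, v\rangle = 0$ meets lower-dimensional cones of the fan. We would handle it first in the linear model (a complete convex fan in $V_{\sigma,x}$ times $\sigma$), reducing to a statement about constructible sheaves on a fan, namely that sections over $\{\langle\xi,-\rangle \ge -s\}$ are constant in $s$ for $s \ge 0$. Here we would directly compute $\Gamma$ via the poset of cones, using that cones with $\langle\gamma,\xi\rangle \not\ge 0$ meet the slab in a contractible convex set, since each is convex and $\xi$ is generic. Finally, we would pass from $\Sh^c$ to all objects by functoriality; compactness is only used so that the statement is phrased in $\Perf(k)$.
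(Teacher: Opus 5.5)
There is a genuine gap. Your overall architecture, comparing both sides through $k_{\starp(\xi)\cap B_\epsilon(x)}$, matches the paper's. But the first identification is wrong. You write $\Hom(k_{\starp(\xi)},\cF)=\Gamma(\starp(\xi);\cF)$ and call $\starp(\xi)$ upward closed, i.e.\ open. It is not open. It consists of the strata of the open star $U$ of $\sigma$ whose cones lie in the closed half-space $\{\langle\xi,\cdot\rangle\ge 0\}$. That condition passes to faces, so $\starp(\xi)$ is \emph{closed} in $U$, just as $B^+_\xi$ is closed in $B_\epsilon(x)$.

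For $Z$ closed in an open $U$, one has $\Hom(k_Z,\cF)\simeq\Gamma_Z(U;\cF)=\fib\bigl(\Gamma(U;\cF)\to\Gamma(U\setminus Z;\cF)\bigr)$. These are sections \emph{with support} in $Z$, not sections over $Z$. With your identification both sides collapse to the ordinary stalk. Indeed, $\Gamma(\starp(\xi);\cF)$ is the value on the minimal stratum $\sigma$, and $\Gamma(B^+_\xi;\cF)\simeq\cF_x$ because $B^+_\xi$ is a convex neighbourhood of $x$ in the conic model. So your two local statements, even once proved, say nothing about $\mu_\xi$.

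Already on $X=\bR$ with strata $\dots,(-1,0),\{0\},(0,1),\dots$ and $\xi=(0,dx)$, one has $\starp(\xi)=[0,1)$ and $\Gamma([0,1);\cF)=\cF_0$. By contrast, $\mu_\xi(\cF)=\Hom(k_{[0,1)},\cF)=\fib(\cF_0\to\cF_{(-1,0)})$. Your Morse-lemma and poset computations must therefore be redone for $\Gamma_Z$. Alternatively, do as the paper does and deform the corepresenting sheaves themselves through non-characteristic deformations: $k_{\starp(\xi)}\sim k_{C^+_\xi}\sim k_{\underline B^+_\xi}\sim k_{B^+_\xi}$. Here $C^+_\xi=\starp(\xi)\cap B_\epsilon(x)$ and $\underline B^+_\xi$ is the half-ball $\{\langle\xi,v\rangle\ge 0\}$. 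Then no explicit formula for $\Hom$ is needed.

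Second, even with supports fixed, your plan does not isolate the step where goodness is really used. The region $B^+_\xi\setminus C^+_\xi$ is not just the slab $\{-\epsilon^2\le\langle\xi,v\rangle<0\}$. It also contains $\rho\cap\{\langle\xi,\cdot\rangle\ge 0\}$ for every cone $\rho$ of $\Sigma_{\sigma,x}$ that crosses $\xi^\perp$. The sweep $s\mapsto\{\langle\xi,v\rangle\ge -s\}$ only relates $B^+_\xi$ to $\underline B^+_\xi$. It is also characteristic at $s=0$: the boundary conormal at $x$ is $\xi\in T^*_{\mathcal T}X$ itself, so the endpoint must be reached as an increasing union of the open complements. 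Meanwhile, the family $\{\langle\xi,v\rangle\ge -s\}\cap\starp(\xi)$ that you propose does not depend on $s$ at all.

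The missing argument is the comparison of $\underline B^+_\xi$ with $C^+_\xi$. The paper handles it with the homotopy $H_t$, which deforms the half-ball onto $C^+_\xi$ cone by cone using the vertices of the convex polyhedra $P_\rho$. In your orthogonality language you need $\Hom(k_{\underline B^+_\xi\setminus C^+_\xi},\cF)=0$. Filtering by strata reduces this to the vanishing of $\Gamma_{\{\langle\xi,\cdot\rangle\ge 0\}}(\tau\cap B_\epsilon(x);L)$ for each stratum $\tau$ whose cone crosses $\xi^\perp$, with $L=i_\tau^!\cF$ constant there. This vanishing holds because $\tau\cap B_\epsilon(x)$ and its intersection with $\{\langle\xi,\cdot\rangle<0\}$ are both nonempty and convex in the local model. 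That is exactly where convexity of the cones and genericity of $\xi$ enter.
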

\begin{proof}[Sketch of proof]
    Take a neighborhood of $x$, locally isomorphic to $\sigma \times V_{\sigma, x}$.  By our assumption on $\mathcal T$ and $\xi$, locally $\mathcal T$ is analytically isomorphic to $\sigma \times \Sigma_{\sigma, x}$.  Define $C_\xi^+ = B_\epsilon(x) \cap \starp(\xi) \subset \sigma \times V_{\sigma, x}$.  Then topologically, $C_\xi^+$ is isomorphic to $\starp(\xi)$, hence there is a non-characteristic deformation between $k_{C_\xi^+}$ and $k_{\starp(\xi)}$.

    On the other hand, take $\underline B_\xi^+ := \{ v \in B_\epsilon(x) \mid \langle \xi, v \rangle \ge 0 \}$.  There is a non-characteristic deformation between $k_{B_\xi^+}$ and $k_{\underline B_\xi^+}$.  Hence, we only need to construct a non-characteristic deformation between $k_{\underline B_\xi^+}$ and $k_{C_\xi^+}$.

    For a point $v \in \underline B^+_\xi$, assume $v \in \rho \in \Sigma_{\sigma, x}$.  By our assumption, $\rho$ is a cone over a polyhedron $P_\rho$ on $\partial B_\epsilon(x)$.  Since $\rho$ is convex, $P_\rho$ must have at least one vertex in $\underline B^+_\xi$.  Let $e_1, e_2, \dots, f_1, f_2, \dots$ be the vertices of $P_\rho$, where the $e_i$ are the vertices in $\underline B^+_\xi$.  Write $v = \sum_i c_i e_i + \sum_j d_j f_j$, and define
    \[
    H_t(v) = \Big( (1-t) \sum_i c_i e_i + \sum_j d_j f_j \Big) \frac{\| v \|}{\big\| (1-t) \sum_i c_i e_i + \sum_j d_j f_j \big\|}.
    \]
    Then $H_t$ is a map from $\underline B_\xi^+$ to $C_\xi^+$.  Taking $\cF_t = k_{H_t(\underline B_\xi^+)}$, we get a deformation from $k_{\underline B_\xi^+}$ to $k_{C_\xi^+}$. A small negative push-off of this deformation would provide a non-characteristic deformation.  
\end{proof}

In particular, $\chi_i$ is represented by an ordinary sheaf in degree $0$, hence $\End (\bigoplus_i \chi_i) $ is coconnective. 

\begin{lemma} \label{lem:H0_of_linking_disks}
    Let $\xi_1 = (x_1, p_1)$ and $\xi_2 = (x_2, p_2)$ be generic points on the smooth strata of $T^*_{\mathcal T} X$.  Then $H^0 \Hom(\chi_{\xi_1}, \chi_{\xi_2})$ is either $k$ or $0$.  Moreover, if $H^0 \Hom(\chi_{\xi_1}, \chi_{\xi_2}) = k$, then
    \[
    x_1 \in \starp(\xi_2) \quad \text{and} \quad \langle \xi_1, N_{x_1}(\starp(\xi_2)) \rangle \ge 0,
    \]
    and $\starp(\xi_2)$ is convex at $x_1$, hence $N_{x_1}(\starp(\xi_2)) = C_{x_1}(\starp(\xi_2))$.
\end{lemma}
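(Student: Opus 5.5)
Since $\chi_{\xi_1} = k_{\starp(\xi_1)}$ and $\chi_{\xi_2} = k_{\starp(\xi_2)}$ by Proposition \ref{prop:corep_microstalk}, I would compute $H^0\Hom(\chi_{\xi_1},\chi_{\xi_2})$ by corepresentability. It equals the degree zero part of $\mu_{\xi_1}(k_{\starp(\xi_2)}) = \Hom(k_{B^+_{\xi_1}}, k_{\starp(\xi_2)})$. This reduces everything to a local computation near $x_1$.

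The local computation goes as follows. Since $\xi_1$ lies on a smooth stratum of $T^*_{\mathcal T}X$, near $x_1$ the relevant covector is conormal to a single codimension-one-type wall, or $x_1$ lies in the interior of a top cell with $p_1$ generic. Write $S = \starp(\xi_2)$, which is a union of strata. $H^0\Hom(k_{B^+_{\xi_1}}, k_S)$ is the space of sections of $k_S$ over the locally closed set $B^+_{\xi_1}$, that is, $H^0(B^+_{\xi_1}\cap S; k)$ modulo the support condition. Because $k_S$ is constructible and $B^+_{\xi_1}$ is a small half-ball, this is $H^0$ of a small conic piece of $S$ near $x_1$ relative to the part cut off by $\langle\xi_1,v\rangle<-\epsilon^2$. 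Concretely, it is the degree zero part of the relative cohomology $H^*(S\cap B_\epsilon(x_1), S\cap \{\langle \xi_1, v\rangle<-\epsilon^2\})$. Degree zero relative cohomology is a space of locally constant functions on a connected local piece which vanish on the cut-off part. The local piece $S\cap B_\epsilon(x_1)$ is a union of cones in the fan at $x_1$, hence contractible and connected when nonempty, so the answer is $k$ or $0$. This proves the first claim.

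For the second claim, assume the answer is $k$. Then $S\cap B_\epsilon(x_1)$ must be nonempty for every $\epsilon$, so $x_1\in\overline S$. Since $S$ is closed (a closed star-type union of strata containing all faces), this gives $x_1\in S$. Moreover the function $1$ must vanish on the cut-off part, so that part is empty: no direction of $C_{x_1}(S)$ pairs negatively with $\xi_1$. This gives $\langle\xi_1, C_{x_1}(S)\rangle\ge0$. To upgrade this to $N_{x_1}(S)$ and convexity, I would use the convexity of the fan cones (goodness condition (2)) together with genericity of $\xi_1$. If the tangent cone $C_{x_1}(S)$ lies in the closed half-space $\langle\xi_1,\cdot\rangle\ge0$ and is a union of convex fan cones selected by the half-space condition defining $\starp$, then it should itself be convex. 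The main obstacle is exactly this step. I would argue it by showing that the cones $\gamma_\tau\subset V_{\sigma,x_1}$ making up $S$ near $x_1$ are precisely those in the half-space $\{\langle \xi_2,\cdot\rangle\ge 0\}$ restricted to the star, and that, with the nonnegativity against $\xi_1$ and genericity, their union is the intersection of a half-space with the star. That union is convex. Finally I would invoke the remark before Proposition \ref{prop:SS_of_open_closed} that $N_0(\gamma)=C_0(\gamma)$ for a convex cone, giving $N_{x_1}(S)=C_{x_1}(S)$ and hence $\langle\xi_1,N_{x_1}(S)\rangle\ge0$.
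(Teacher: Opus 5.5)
Your reduction to $\mu_{\xi_1}(k_S)=\Hom(k_{B^+_{\xi_1}},k_S)$, with $S=\starp(\xi_2)$, is the same as the paper's. Computing it as the relative cohomology of $\bigl(S\cap B_\epsilon(x_1),\,S\cap B_\epsilon(x_1)\cap\{\langle\xi_1,v\rangle<-\epsilon^2\}\bigr)$ is a more direct form of the paper's dual description $\Gamma_c(B'_{\xi_1}\cap S)$.

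That formula, and your step ``$x_1\in\overline S$ and $S$ is closed, so $x_1\in S$'', rest on a false claim: $S$ is not closed. By definition it contains only strata whose closure contains $x_2$, so it is closed only in the open star of the stratum $\sigma_2\ni x_2$; faces of its cells lying away from $x_2$ are missing. Take $x_1\in\overline S\setminus S$, for instance on the far edge of a triangle of $S$, with $\xi_1$ pointing into that triangle. Your formula gives $k$, but the true answer is $0$. Every component of $S\cap B_\epsilon(x_1)$ accumulates at $x_1\notin S$, so $\Gamma(B_\epsilon(x_1);k_S)=0$, and $H^0\Hom(k_{B^+_{\xi_1}},k_S)$ is a subspace of it. Once this case is handled separately, the dichotomy and the conclusion $x_1\in S$ follow as you say. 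For $x_1\in S$ the apex stratum lies in $S$, so $S\cap B_\epsilon(x_1)$ is a union of cones through the apex and is contractible.

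The convexity step is a genuine gap, and the mechanism you propose cannot work. The cones making up $S$ are selected by a half-space condition at $x_2$, not at $x_1$. Even when $x_1=x_2$, the union of the cones of a fan lying in a closed half-space is not the half-space intersected with the star, and need not be convex. Here is a counterexample.
\begin{itemize}
\item Take the complete simplicial fan in $\bR^3$ with rays $\pm e_3$ and $a_1,\dots,a_6$, where $a_k$ is at longitude $60k^\circ$, at latitude $-10^\circ$ for $a_1$ and $+10^\circ$ otherwise. Its maximal cones are spanned by $\pm e_3,a_k,a_{k+1}$.
\item Put $x_2=0$ and $\xi_2=dz$, which is generic. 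Then $\starp(\xi_2)$ consists of the cones not containing $a_1$ or $-e_3$.
\item So $\starp(\xi_2)$ contains $a_2$ and $a_6$ but not $a_2+a_6$. Indeed $a_2+a_6=a_1+3\sin(10^\circ)\,e_3$ lies in the relative interior of the cone spanned by $e_3$ and $a_1$.
\item Taking $\xi_1=\xi_2$, $H^0\End(\chi_{\xi_2})\ni\mathrm{id}$ is $k$.
\end{itemize}
So convexity at $x_1$ is not a consequence of $H^0\neq 0$. The paper's proof only asserts it (``the open boundary of $B'_{\xi_1}$ cannot intersect the interior of $\starp(\xi_2)$''). This clause needs an extra hypothesis, not a better argument.

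The remaining inequality does not need convexity. For $x_1\in S$ one has $N_{x_1}(S)\subset C_{x_1}(S)$. Indeed, if $v\notin C_{x_1}(S)$, then $x_1+tv\notin S$ for all small $t>0$, whence $v\in C_{x_1}(X\setminus S,S)$. So $\langle\xi_1,N_{x_1}(S)\rangle\ge0$ follows from your $\langle\xi_1,C_{x_1}(S)\rangle\ge0$. The paper instead gets it from the microsupport bound of Proposition~\ref{prop:SS_of_open_closed}.
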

\begin{proof}
    Let $B'_{\xi_1} = \{ v \in \overline{B_\epsilon(x_1)} \mid \langle \xi_1, v \rangle > -\epsilon^2 \}$.  Then we have
    \begin{align*}
        \Hom(k_{\starp(\xi_1)}, k_{\starp(\xi_2)}) &= \Hom(k_{B_{\xi_1}^+}, k_{\starp(\xi_2)}) \\
        &= \Gamma\big( k_{B_{\xi_1}'} \otimes k_{\starp(\xi_2)} \big) \\
        &= \Gamma_c\big( B'_{\xi_1} \cap \starp(\xi_2) \big).
    \end{align*}
    Assume $H^0 \Hom(k_{\starp(\xi_1)}, k_{\starp(\xi_2)}) = H^0_c(B'_{\xi_1} \cap \starp(\xi_2)) \neq 0$.  Then $B'_{\xi_1} \cap \starp(\xi_2)$ has no open boundary.  This implies that $x_1$ must lie on the closed boundary of $\starp(\xi_2)$.  Hence $x_1 \in \starp(\xi_2)$, and by Proposition~\ref{prop:SS_of_open_closed}, we have $\langle \xi_1, N_{x_1}(\starp(\xi_2)) \rangle \ge 0$.  Moreover, the open boundary of $B'_{\xi_1}$ cannot intersect the interior of $\starp(\xi_2)$.  This forces $\starp(\xi_2)$ to be convex at $x_1$.
\end{proof}

\begin{lemma}
    Let $\xi_1$ and $\xi_2$ be as above, with $x_1 \in \sigma_1$ and $x_2 \in \sigma_2$.  If $H^0 \Hom(\chi_{\xi_1}, \chi_{\xi_2}) = k$, then either
    \[
    \dim(\sigma_1) > \dim(\sigma_2),
    \]
    or
    \[
    \sigma_1 = \sigma_2 \quad \text{and} \quad \starp(\xi_1) \supset \starp(\xi_2).
    \]
\end{lemma}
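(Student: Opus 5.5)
The plan is to feed the conclusions of Lemma~\ref{lem:H0_of_linking_disks} into the local fan structure of the good triangulation. Assume $H^0\Hom(\chi_{\xi_1},\chi_{\xi_2}) = k$. By that lemma, $x_1 \in \starp(\xi_2)$. Moreover, $\starp(\xi_2)$ is convex at $x_1$, so $N_{x_1}(\starp(\xi_2)) = C_{x_1}(\starp(\xi_2))$. Finally, $\xi_1$ pairs nonnegatively with this tangent cone.

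Since $\starp(\xi_2)$ is a union of strata $\tau$ with $x_2 \in \overline{\tau}$, the stratum $\sigma_1 \ni x_1$ is one of the strata making up $\starp(\xi_2)$. In particular $\sigma_2 \subset \overline{\sigma_1}$, so $\dim \sigma_1 \ge \dim \sigma_2$. If the inequality is strict we are done. So the remaining case, $\dim\sigma_1 = \dim\sigma_2$, forces $\sigma_1 = \sigma_2 =: \sigma$, because a stratum of the same dimension containing $\sigma$ in its closure is $\sigma$ itself.

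In the case $\sigma_1=\sigma_2=\sigma$, I would argue in the fan picture. Near any point of $\sigma$, good triangulation means $\mathcal T$ looks like $\sigma \times \Sigma_{\sigma}$. Because $\xi_i$ lies on a smooth stratum of $T^*_{\mathcal T}X$ over $\sigma$, each $\xi_i$ is a conormal vector to $\sigma$, i.e.\ a covector on $V_\sigma$. Then $\starp(\xi_i)$ is locally $\sigma\times\bigcup\{\gamma_\tau : \langle\gamma_\tau,\xi_i\rangle\ge 0\}$, and this description is the same at every point of $\sigma$. So I may compare the two sets near $x_1$, translating $\xi_2$ along $\sigma$; genericity makes this harmless.

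The tangent cone $C_{x_1}(\starp(\xi_2))$ is then $T\sigma \times \bigcup_{\langle\gamma_\tau,\xi_2\rangle\ge0}\gamma_\tau$, which is convex by Lemma~\ref{lem:H0_of_linking_disks}. The condition $\langle\xi_1, C_{x_1}(\starp(\xi_2))\rangle\ge 0$ says that every cone $\gamma_\tau$ with $\langle\gamma_\tau,\xi_2\rangle\ge0$ also satisfies $\langle\gamma_\tau,\xi_1\rangle\ge 0$. By the fan description of $\starp$, this is exactly $\starp(\xi_2)\subset\starp(\xi_1)$ near $\sigma$. Since both sets are unions of strata in $\sta(\sigma)$ determined by these cone conditions, the inclusion holds globally.

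The main obstacle I expect is making rigorous the claim that $\sigma_1$ is among the strata of $\starp(\xi_2)$. Here $x_1$ could a priori lie only in the closure of some stratum of $\starp(\xi_2)$; I must rule out $x_1$ lying on a smaller stratum on the boundary of $\starp(\xi_2)$ but outside the union itself. I would handle this with the ``no open boundary'' argument in the proof of Lemma~\ref{lem:H0_of_linking_disks}. The set $B'_{\xi_1}\cap\starp(\xi_2)$ must be closed in $B'_{\xi_1}$. Since $\xi_1$ is a conormal to $\sigma_1$ and generic, $B'_{\xi_1}$ contains points of $\sigma_1$ near $x_1$. This forces $\sigma_1\subset\starp(\xi_2)$: otherwise those points would be limits of $\starp(\xi_2)$ not belonging to it.
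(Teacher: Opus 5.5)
Your proposal is correct and follows essentially the paper's argument: use Lemma~\ref{lem:H0_of_linking_disks} to get $x_1\in\starp(\xi_2)$, hence $\sigma_2\subset\overline{\sigma_1}$, and in the equal-dimension case read off $\starp(\xi_2)\subset\starp(\xi_1)$ from $\langle\xi_1, C_{x_1}(\starp(\xi_2))\rangle\ge 0$ and the definition of $\starp$. The obstacle you flag in your last paragraph is not a real one: $\starp(\xi_2)$ is a union of strata and the strata partition $X$, so $x_1\in\starp(\xi_2)$ already gives $\sigma_1\subset\starp(\xi_2)$. Likewise, translating $\xi_2$ along $\sigma$ is unnecessary, because every stratum of $\starp(\xi_2)$ has $\sigma$, and hence $x_1$, in its closure, so the definition of $\starp(\xi_1)$ applies to it directly at $x_1$.
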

\begin{proof}
    Assume $H^0 \Hom(\chi_{\xi_1}, \chi_{\xi_2}) = k$.  By Lemma~\ref{lem:H0_of_linking_disks}, $x_1 \in \starp(\xi_2)$, which implies that $\overline \sigma_1 \supset \sigma_2$.  Therefore, either $\dim(\sigma_1) > \dim(\sigma_2)$ or $\sigma_1 = \sigma_2$.  If $\sigma_1 = \sigma_2$, then again by Lemma~\ref{lem:H0_of_linking_disks}, $\big\langle \xi_1, C_{x_1}\big( \starp(\xi_2) \big) \big\rangle \ge 0$.  By the definition of $\starp$, we have $\starp(\xi_2) \subset \starp(\xi_1)$.
\end{proof}

Therefore, we can define an order on the microstalks associated to the smooth strata of $T^*_{\mathcal T} X$, such that
\[
H^0 \Hom(\chi_{\xi_1}, \chi_{\xi_2}) = 0 \quad \text{if} \quad \xi_1 > \xi_2.
\]

\begin{lemma}
    Consider a finite collection of microstalks $\{ \chi_i \}$ at smooth points on $T^*_{\mathcal{T}} X$ for a good triangulation $\mathcal{T}$.  Then $H^0 \End\left( \bigoplus_i \chi_i \right)$ is of finite global dimension.
\end{lemma}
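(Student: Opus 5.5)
The plan is to show that $\Lambda_0 := H^0 \End\left(\bigoplus_i \chi_i\right)$ is a finite dimensional algebra whose quiver, with relations, is directed (has no oriented cycles). Such an algebra has finite global dimension. First I reduce to the case where the $\chi_i$ are pairwise non-isomorphic and each has local endomorphism ring in degree $0$. Replacing the collection by a set of representatives of its isomorphism classes changes $\Lambda_0$ only up to Morita equivalence, which preserves global dimension. By Lemma~\ref{lem:H0_of_linking_disks}, $H^0\End(\chi_i)$ is $k$ or $0$. It is nonzero, since $\chi_i = k_{\starp(\xi_i)}$ is a nonzero sheaf in degree $0$ and the identity survives. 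So $H^0\End(\chi_i) = k$, and the idempotents $e_i$ projecting onto the summands $\chi_i$ form a complete set of primitive orthogonal idempotents with $e_i\Lambda_0 e_i = k$. Finite dimensionality of $\Lambda_0$ follows from properness of $\Sh(X, T^*_{\mathcal T}X)^c$.

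Next I use the order constructed just before the statement. By the preceding lemma, $H^0\Hom(\chi_{\xi_1},\chi_{\xi_2}) \ne 0$ forces either $\dim\sigma_1 > \dim\sigma_2$, or $\sigma_1 = \sigma_2$ with $\starp(\xi_1)\supset\starp(\xi_2)$. I define the preorder $\xi_1 \succeq \xi_2$ lexicographically: first by dimension of the stratum, then, within a stratum, by containment of $\starp$. To make this a partial order on isomorphism classes, I need the case $\sigma_1=\sigma_2$, $\starp(\xi_1)=\starp(\xi_2)$. In that case $\chi_{\xi_1} = k_{\starp(\xi_1)} = k_{\starp(\xi_2)} = \chi_{\xi_2}$ literally, so the two were identified in the first step. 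Antisymmetry then holds, and I extend to a total order $\xi_1 < \xi_2 < \cdots < \xi_m$. With respect to this order, $e_j \Lambda_0 e_i = H^0\Hom(\chi_i,\chi_j)$ (up to the paper's composition convention) vanishes whenever $i > j$ in one fixed direction, and $e_i\Lambda_0e_i = k$. Hence $\Lambda_0$ is a triangular matrix algebra with diagonal entries $k$. Equivalently, $\Lambda_0 \cong kQ/I$ for an acyclic quiver $Q$.

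Finally, I show that a finite dimensional algebra $\Lambda_0$ with a complete set of primitive idempotents $e_1,\dots,e_m$, $e_i\Lambda_0e_i=k$, and $e_j\Lambda_0e_i = 0$ for $j<i$ has global dimension at most $m-1$. I argue by induction on $m$ using the simples $S_i$. Take $i$ extremal, say a sink, so that $e_i\Lambda_0$ is simple and projective. The radical of each indecomposable projective $P_j = e_j\Lambda_0$ has composition factors only among the $S_l$ with $l$ strictly on one side of $j$. Hence $\operatorname{pd} S_j \le 1 + \max_l \operatorname{pd} S_l$ over those $l$, and induction along the order gives $\operatorname{pd} S_j \le m-1$. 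Global dimension is the maximum of $\operatorname{pd}$ over the simples.

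The step I expect to be the main obstacle is antisymmetry of the order, i.e. excluding nonzero $H^0$-maps in both directions between non-isomorphic $\chi$'s on the same stratum. The lemma gives only containment of $\starp$'s, and I must check that equal $\starp$ really means isomorphic objects. Since $\chi_\xi$ is literally $k_{\starp(\xi)}$ by Proposition~\ref{prop:corep_microstalk}, this should be immediate.
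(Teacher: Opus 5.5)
Your proposal is correct and follows the same route as the paper. The order from the preceding lemma makes $H^0\End(\bigoplus_i\chi_i)$ directed, so it is a bound quiver algebra on an acyclic quiver. Your reduction to pairwise non-isomorphic summands (using $\chi_\xi = k_{\starp(\xi)}$ for antisymmetry, and Lemma~\ref{lem:H0_of_linking_disks} for $e_i\Lambda_0 e_i = k$) makes explicit what the paper leaves implicit. The paper then cites Derksen--Weyman for finite global dimension, whereas you prove it directly by induction on projective dimensions of simples, which gives the explicit bound $m-1$.
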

\begin{proof}
    This is the path algebra of an acyclic quiver with admissible relations.  By \cite[Proposition 3.1.7]{Derksen-Weyman}, it is of finite global dimension.
\end{proof}

Thus the algebras $A = \End\left( \bigoplus_i \chi_i \right)$ for the microsheaf analogue of Proposition \ref{prop: cogeneration implies chord} are proper and coconnective, and $H^0(A)$ is directed, in particular of finite global dimension, and hence cogenerates by~\cite{Rickard}.  

However, Example \ref{dga which does not cogenerate} gives a dg algebra sharing all these properties, which does not cogenerate.

\begin{question}
    Let $\{ \chi_i \}$ be a finite collection of corepresentatives of microstalks as above.  Does the dg algebra $\End\left( \bigoplus_i \chi_i \right)$ cogenerates its module category?
\end{question}


\bibliographystyle{plain}
\bibliography{refs}

\end{document}